\newtheorem{theorem}{Theorem}[section]
\newtheorem{lemma}[theorem]{Lemma}
\newtheorem{proposition}[theorem]{Proposition}
\newtheorem{corollary}[theorem]{Corollary}
\theoremstyle{definition}
\newtheorem{definition}[theorem]{Definition}
\newtheorem{remark}[theorem]{Remark}
\newtheorem{solution}[theorem]{Solution}
\numberwithin{equation}{section}
 \DeclareMathOperator{\Ker}{Ker}
\DeclareMathOperator{\Image}{Im}
\DeclareMathOperator{\uExt}{\underline{Ext}}
\DeclareMathOperator{\uHom}{\underline{Hom}}
\DeclareMathOperator{\id}{id}
\DeclareMathOperator{\hdet}{hdet}
\def\bt{\begin{theorem}}
\def\et{\end{theorem}}
\def\bl{\begin{lemma}}
\def\el{\end{lemma}}
\def\br{\begin{remark}}
\def\er{\end{remark}}
\def\bc{\begin{corollary}}
\def\ec{\end{corollary}}
\begin{document}
%\title[Nakayama automorphisms of graded Ore extensions of Koszul AS-regular algebras]{Nakayama automorphisms of graded Ore extensions of Koszul Artin-Schelter regular algebras with non-trivial derivations
%}

\title{Nakayama automorphisms of graded Ore extensions of Koszul Artin-Schelter regular algebras
}

\author{Y. Shen}
\address{Shen: Department of Mathematics, Zhejiang Sci-Tech University, Hangzhou 310018, China}
\email{yuanshen@zstu.edu.cn}

\author{Y. Guo}
\address{Guo: Department of Mathematics, Zhejiang Sci-Tech University, Hangzhou 310018, China}
\email{2275045039@qq.com}

\date{}

\begin{abstract}
Let $A$ be a Koszul Artin-Schelter regular algebra, $\sigma$ a graded automorphism of $A$ and $\delta$ a degree-one $\sigma$-derivation of $A$. We introduce an invariant for $\delta$ called the $\sigma$-divergence of $\delta$. We describe the Nakayama automorphism of the graded Ore extension $B=A[z;\sigma,\delta]$ explicitly using the $\sigma$-divergence of $\delta$, and construct a twisted superpotential $\hat{\omega}$ for $B$ so that it is a derivation quotient algebra defined by $\hat{\omega}$. We also determine all graded Ore extensions of noetherian Artin-Schelter regular algebras of dimension 2 and compute their Nakayama automorphisms.
\end{abstract}

\subjclass[2010]{16S36,16S37,16E65,16S38,16W50}

\keywords{Koszul Artin-Schelter regular algebras, Graded Ore extensions, Nakayama automorphisms, Twisted superpotentials, Calabi-Yau}

\maketitle
%\tableofcontents

\section*{Introduction}
To understand Artin-Schelter regular algebras has been a main topic in the study of noncommutative algebras and noncommutative projective geometry since the late 1980s. These algebras are exactly  connected graded skew Calabi-Yau algebras (\cite{RRZ1}), and possess a kind of automorphisms called Nakayama automorphisms. Such automorphisms are an important tool to study  Hopf actions, noncommutative invariant theory and so on (\cite{CWZ,LMZ,RRZ2,SL}). However, the computation of Nakayama automorphisms is always hard. There is a plenty of work to provide different methods to solve this problem (see \cite{HVZ,LM,LWW,LMZ,LMZ1,RRZ1,RRZ2,SL,SZL,V,ZSL,ZVZ}).

Obtaining new Artin-Schelter regular algebras from known ones is a common approach, and the methods include Ore extensions, double Ore extensions and regular normal extensions. How the Nakayama automorphisms behave under those extensions is an interesting and concerned problem. The behavior of Nakayama automorphisms under regular normal extensions was studied in \cite{RRZ1,ZSL}. The Nakayama automorphisms of trimmed double Ore extensions of Koszul Artin-Schelter regular algebras were described in \cite{ZVZ}. Earlier, Liu, Wang and Wu considered the case of Ore extensions in a general setting (\cite{LWW}). They proved that if $A$ is (not necessarily connected graded) skew Calabi-Yau with a Nakayama automorphism $\mu_A$, then the Ore extension $B=A[z;{\sigma},\delta]$ has a Nakayama automorphism $\mu_B$ satisfying ${\mu_B}_{\mid A}={\sigma}^{-1}\mu_A$ and $\mu_B(z)=\lambda z+b$ for some $\lambda,b\in A$. It is natural to ask what $\lambda$ and $b$ are. Restricted on graded Ore extensions, Zhu, Van Oystaeyen and Zhang showed  $\lambda=\hdet({\sigma})$ if $A$ is Koszul Artin-Schelter regular in \cite{ZVZ}, and Zhou, Lu and the first named author showed the same equality if $A$ is noetherian Artin-Schelter regular generated in degree 1 in \cite{SZL}, where $\hdet$ is the homological determinant introduced by  J{\o}rgensen and Zhang (see \cite{JZ}).

In fact, $\lambda$ can be determined by trimmed Ore extensions, namely, the ${\sigma}$-derivation ${\delta}$ is trivial, by the filtered-graded technique. However, $b$ is related to ${\delta}$, and seems mysterious for us. Recently, Liu and Ma described  $\lambda$ and $b$ explicitly for all (ungraded) Ore extensions if $A$ is a polynomial algebra in \cite{LM}. It inspires us to  make a progress in the description of the parameter $b$. The main goal of this paper is to describe the Nakayama automorphisms of graded Ore extensions of Koszul Artin-Schelter regular algebras specifically.

Let $A=T(V)/(R)$ be a Koszul Artin-Schelter regular algebra of dimension $d$, ${\sigma}$ a graded automorphism of $A$ and ${\delta}$ a degree-one ${\sigma}$-derivation of $A$. In order to realize our desire, it requires a way to handle the ${\sigma}$-derivation $\delta$.  Our approach is to construct a pair $(\{\delta_{i,r}\},\{\delta_{i,l}\})$ of two sequences of linear maps for ${\delta}$:
$$
\delta_{i,r}:W_i\to W_i\otimes V,\qquad \delta_{i,l}: W_i\to V\otimes W_i,
$$
where $W_1=V$ and $W_j=\cap_{s=1}^{j} V^{\otimes s}\otimes R\otimes V^{j-s-2}$ for any $i\geq 1,j\geq 2$, by the Koszul complex of the graded trivial module $k_A$ (see Lemma \ref{lemma: construction of delta_r} and Lemma \ref{lemma: construction of delta_l}). In general, sequence pair $(\{\delta_{i,r}\},\{\delta_{i,l}\})$  is not unique for ${\delta}$. However, there is a unique pair $(\delta_r,\delta_l)$ of elements in $V$ coming from the action of $\delta_{d,r}$ and $\delta_{d,l}$ on $W_d$ for each sequence pair $(\{\delta_{i,r}\},\{\delta_{i,l}\})$,  since $W_d$ is $1$-dimensional. We find that $\delta_{r}+\mu_A{\sigma}^{-1}(\delta_l)$ is independent on the choices of sequence pairs for $\delta$, where $\mu_A$ is the Nakayama automorphism of $A$ (see Corollary \ref{coro: relation between delta_r and delta l}). We call this invariant the \emph{${\sigma}$-divergence} of ${\delta}$, denoted by $\nabla_{{\sigma}}\cdot{\delta}$. If $A$ is a polynomial algebra and $\sigma$ is the identity map $\id_A$, then $\nabla_{{\id_A}}\cdot{\delta}$ is precisely the usual divergence of $\delta$.

It is well known that the graded Ore extension $B=A[z;{\sigma},{\delta}]$ is also a Koszul Artin-Schelter regular algebra of dimension $d+1$. Using a sequence pair $(\{\delta_{i,r}\},\{\delta_{i,l}\})$ for ${\delta}$, we compute the Yoneda product of the Ext-algebra $E(B)$ of $B$. Since $E(B)$ is a graded Frobenius algebra and its Nakayama automorphism is dual to the one of $B$ (see \cite{RRZ2,V}), then we obtain the main result of this paper.

\begin{theorem}\emph{(Theorem \ref{thm: nakayama automorphism of ore extension})}
Let $B=A[z;{\sigma},{\delta}]$ be a graded Ore extension of a Koszul Artin-Schelter regular algebra $A$, where ${\sigma}$ is a graded automorphism of $A$ and $\delta$ is a degree-one $\sigma$-derivation. Then the Nakayama automorphism $\mu_B$ of $B$ satisfies
\begin{align*}
{\mu_{B}}_{\mid A}={\sigma}^{~-1}\mu_A,\qquad
\mu_B(z)=\hdet({\sigma})\, z+\nabla_{{\sigma}}\cdot{\delta},
\end{align*}
where $\mu_A$ is the Nakayama automorphism of $A$.
\end{theorem}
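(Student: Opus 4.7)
The first equation $\mu_B|_A = \sigma^{-1}\mu_A$ and the fact that $\mu_B(z) = \lambda z + b$ for some $\lambda \in k$ and $b \in V$ are already known from \cite{LWW}, and the identification $\lambda = \hdet(\sigma)$ from either \cite{ZVZ} or \cite{SZL}. So the heart of the theorem is to show $b = \nabla_\sigma \cdot \delta$. My plan is to extract $b$ from the Koszul dual $E(B) = \Ext^*_B(k,k)$, which is a graded Frobenius algebra of length $d+1$ whose classical Nakayama automorphism $\mu_{E(B)}$ determines $\mu_B$ via the correspondence of \cite{V,RRZ2}.

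First I would describe the Koszul relation spaces $W_j^B \subset (V\oplus kz)^{\otimes j}$ in terms of the spaces $W_j^A$ and the sequence pair $(\{\delta_{i,r}\},\{\delta_{i,l}\})$. Since the new relations $zv-\sigma(v)z-\tilde\delta(v)$ (with $\tilde\delta$ a lift of $\delta$ into $V\otimes V$) are linear in $z$, decomposing $W_j^B$ by the number of occurrences of $z$ yields a splitting $W_j^B = W_j^A \oplus (\text{one-}z\text{ piece})$. The one-$z$ piece is built from elements of the form $w\otimes z$ or $z\otimes w$ for $w\in W_{j-1}^A$, modified by the $\delta$-corrections supplied precisely by $\delta_{j-1,r}$ and $\delta_{j-1,l}$; this is exactly the role for which the sequence pair was invented in Lemmas \ref{lemma: construction of delta_r} and \ref{lemma: construction of delta_l}. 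In top degree, this produces a concrete generator $\omega_B$ of the one-dimensional space $W_{d+1}^B$ obtained from a generator $\omega_A$ of $W_d^A$ by appending $z$ and subtracting a $\delta$-correction whose scalar coefficients are exactly $\delta_r$ and $\delta_l$.

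Next I would compute the piece of the Yoneda product $E(B)^1 \otimes E(B)^d \to E(B)^{d+1}$ needed to read off $\mu_{E(B)}$. This product is dual to the inclusions of $W_{d+1}^B$ into $V_B \otimes W_d^B$ and into $W_d^B \otimes V_B$. Pairing the dual of $z \in V_B$ against the dual of $\omega_A \in W_d^B$ in the two orderings and comparing via the Frobenius identity $\eta_B(xy) = \eta_B(\mu_{E(B)}(y)x)$ expresses $\mu_B(z)-\hdet(\sigma)z$ as a specific element of $V$. One ordering contributes $\delta_r$ directly, while the other contributes $\delta_l$ twisted by $\mu_A$ and $\sigma^{-1}$, reflecting the already-known restriction $\mu_B|_A=\sigma^{-1}\mu_A$ on the $A$-part of $E(B)$. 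Summing these gives $b = \delta_r + \mu_A\sigma^{-1}(\delta_l) = \nabla_\sigma \cdot \delta$ by the definition of the $\sigma$-divergence.

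The main obstacle will be the sign and twist bookkeeping: ensuring that the Koszul sign conventions, the shift by $\sigma^{-1}$ coming from the restriction, and the Nakayama twist $\mu_A$ on the $A$-part all conspire to produce the precise combination $\delta_r + \mu_A\sigma^{-1}(\delta_l)$ rather than some other linear combination. The independence-of-choice statement in Corollary \ref{coro: relation between delta_r and delta l} is crucial here, since it guarantees that the ambiguity in choosing the $\delta$-corrections in $W_j^B$ is exactly the ambiguity that preserves this combination, so the scalar extracted from the Yoneda product is automatically well-defined even though intermediate choices are not.
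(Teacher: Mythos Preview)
Your overall strategy---extract $\mu_B$ from the Frobenius structure on $E(B)$ via Yoneda products in degrees $1$ and $d$---is exactly the paper's strategy. The execution differs in two respects worth noting.

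First, the paper does not cite \cite{LWW,ZVZ,SZL} for the known pieces; it gives a self-contained computation that recovers ${\mu_B}_{\mid A}=\overline{\sigma}^{\,-1}\mu_A$ and the coefficient $\hdet(\overline{\sigma})$ simultaneously with $b$. This is not merely a matter of taste: the paper needs both the $\omega^*$-pairings \emph{and} the $\eta_j^*$-pairings (Lemmas \ref{lemma: E^1(B) E^d(B)} and \ref{lemma: E^d(B) E^1(B)}) to separate the $\xi$-component $c_i$ of $\mu_{E(B)}(x_i^*)$ from its $x_j^*$-components. Your shortcut of importing $\mu_B|_A$ from \cite{LWW} lets you skip the $\eta_j^*$-computations entirely and read off $c_i$ from the single identity $\langle x_i^*,\omega^*\rangle=(-1)^d\langle\omega^*,\mu_{E(B)}(x_i^*)\rangle$, which is cleaner.

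Second, and more substantively, the paper never describes $W_j^B$ explicitly in order to compute the Yoneda products. Instead it works directly with the mapping-cone resolution (\ref{resolution of koszul regular}) of $k_B$ and computes products by constructing explicit chain-map liftings $\varphi_\bullet$, $\psi_\bullet$ along that resolution (see the proofs of Lemmas \ref{lemma: E^1(B) E^d(B)} and \ref{lemma: E^d(B) E^1(B)}). The explicit generator $\hat\omega\in W_{d+1}^B$ that you propose to construct first is only written down \emph{afterwards}, in Theorem \ref{thm: twisted superpotential for B}, as a separate application. Your route---build $W_j^B$ from $W_j^A$ and the sequence pair, then compute the Yoneda product as the dual of the inclusion $W_{d+1}^B\hookrightarrow V_B\otimes W_d^B$ and $W_{d+1}^B\hookrightarrow W_d^B\otimes V_B$---is the Koszul-dual reformulation of the same calculation and would work, but it front-loads the superpotential construction that the paper postpones. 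Both approaches ultimately reduce to the same bookkeeping with $\delta_r$ and $\delta_l$; the paper's resolution-based method is more direct for the Nakayama computation, while yours makes the link to the superpotential more transparent from the start.
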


It is also well-known that a Koszul Artin-Schelter regular algebra $A$ is determined by a twisted superpotential $\omega$, that is, $A$ is isomorphic to a derivation quotient algebra defined by $\omega$ (see \cite{BSW,DV}). He, Van Oystaeyen and Zhang constructed a new twisted superpotential for a graded Ore extension of $A$ in two special cases (\cite{HVZ,HVZ1}). In fact, any sequence pair $(\{\delta_{i,r}\},\{\delta_{i,l}\})$ for ${\delta}$ provides us assistance to construct  a new twisted superpotential $\hat{\omega}$ for any graded Ore extension $B=A[z;\sigma,\delta]$ from $\omega$, and it is independent on the choices of sequence pairs. Hence, $B$ is isomorphic to a derivation quotient algebra defined by $\hat{\omega}$ (see Theorem \ref{thm: twisted superpotential for B}).

As an application of the main result, we determine all graded Ore extensions of noetherian Koszul Artin-Schelter regular algebras of dimension 2 and compute their Nakayama automorphisms. There is an interesting observation about Calabi-Yau property.

\begin{theorem}\emph{(Theorem \ref{thm: CY for 2-dim})} Assume the base field $k$ is of characteristic $0$.
Let $A=k\langle x_1,x_2\rangle/(r)$ be a noetherian Artin-Schelter regular algebra of dimension 2, and $B$ a graded Ore extension $A[z;{\sigma},{\delta}]$. %Write $\mu_A$ for the Nakayama automorphism of $A$.
\begin{enumerate}
\item  Suppose $A$ is commutative, then $B$ is Calabi-Yau if and only if ${\sigma}=\id_A$, and $$   {\delta}(x_1)=l_1x_1^2-2l_4x_2x_1+l_2x_2^2,\qquad
    {\delta}(x_2)=l_3x_1^2-2l_1x_2x_1+l_4x_2^2,
    $$ for some $l_1,l_2,l_3,l_4\in k$.
\item  Suppose $A$ is noncommutative, then $B$ is Calabi-Yau if and only if ${\sigma}$ is the Nakayama automorphism of $A$.
\end{enumerate}
\end{theorem}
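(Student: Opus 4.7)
The plan is to invoke Theorem \ref{thm: nakayama automorphism of ore extension}, which describes $\mu_B$ completely: $\mu_B|_A = \sigma^{-1}\mu_A$ and $\mu_B(z) = \hdet(\sigma)\,z + \nabla_\sigma\cdot\delta$. The Calabi--Yau property $\mu_B=\id_B$ therefore amounts to the three simultaneous conditions (i) $\sigma=\mu_A$, (ii) $\hdet(\sigma)=1$, and (iii) $\nabla_\sigma\cdot\delta=0$. Under the standing assumption $\mathrm{char}\,k=0$, a noetherian AS-regular $A=k\langle x_1,x_2\rangle/(r)$ falls into one of the three classical types: the polynomial algebra $r=x_1x_2-x_2x_1$, the Jordan plane, and the quantum plane $r=x_1x_2-qx_2x_1$ with $q\in k^{\times}\setminus\{1\}$; the proof proceeds by treating these in turn.

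For part~(1), $A$ commutative gives $\mu_A=\id_A$, so (i) forces $\sigma=\id_A$ and (ii) is automatic. As observed in the introduction, $\nabla_{\id_A}\cdot\delta$ coincides with the classical divergence $\partial_1\delta(x_1)+\partial_2\delta(x_2)$. Writing each $\delta(x_i)\in A_2$ as a generic homogeneous quadratic and imposing the vanishing of this divergence produces a four-parameter solution space; rewriting the cross term asymmetrically via $x_1x_2=x_2x_1$ yields exactly the formulas stated in the theorem, with $l_1,l_2,l_3,l_4\in k$ ranging freely.

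For part~(2), $A$ is either Jordan or quantum, and $\mu_A$ is nontrivial (for instance $\mu_A(x_1)=q^{-1}x_1$, $\mu_A(x_2)=qx_2$ in the quantum case). The ``only if'' direction is immediate from (i). For the ``if'' direction I assume $\sigma=\mu_A$ and verify (ii) and (iii) directly. The identity $\hdet(\mu_A)=1$ is read off from the action of $\mu_A$ on the one-dimensional top space $W_2=kr$ of the Koszul complex. For (iii), I would fix an explicit sequence pair $(\{\delta_{i,r}\},\{\delta_{i,l}\})$ drawn from the bimodule Koszul resolution of $A$, expand the $\sigma$-derivation compatibility $\delta(r)=0$ in $A_3$, and read off the scalars $\delta_r,\delta_l\in V$ appearing in the definition of $\nabla_\sigma\cdot\delta$.

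The main obstacle is this last step: showing that the pair $(\delta_r,\delta_l)$ produced by every admissible $\mu_A$-derivation automatically satisfies $\delta_r+\mu_A\sigma^{-1}(\delta_l)=\delta_r+\delta_l=0$. This cancellation, special to dimension two where $\dim W_2=1$, is precisely what makes the condition on $\delta$ disappear in the statement of part~(2), in sharp contrast to the commutative case; its verification, carried out separately for the quantum and Jordan subcases using the explicit forms of $r$ and $\mu_A$, will occupy the bulk of the noncommutative argument.
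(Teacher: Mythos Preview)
Your proposal is correct and follows essentially the same route as the paper: both reduce the Calabi--Yau condition via Theorem~\ref{thm: nakayama automorphism of ore extension} to $\sigma=\mu_A$ together with $\hdet(\sigma)=1$ and $\nabla_\sigma\cdot\delta=0$, and then carry out explicit case-by-case computations for the polynomial algebra, the quantum plane (separating $q=-1$ from $q\neq\pm1$), and the Jordan plane. The only cosmetic difference is that the paper first classifies \emph{all} graded Ore extensions of each type and records the full Nakayama automorphism (equations~(\ref{naka equa for comm}), (\ref{naka equa for q=-1}), (\ref{naka equa for qneq-1}), (\ref{naka equa for jordan})) before specializing, whereas you propose to work only with $\sigma=\mu_A$ from the start; the underlying computation of $(\delta_r,\delta_l)$ from the decomposition $\delta(r)=r\otimes\delta_r+\delta_l\otimes r$ and the verification that $\delta_r+\delta_l=0$ for every admissible $\mu_A$-derivation is identical in both.
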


The graded automorphism group for the commutative noetherian Artin-Schelter regular algebra of dimension 2 is much bigger than a noncommutative one, as well as the set of $\sigma$-derivations. So there are more conditions for graded Ore extensions of the commutative one being Calabi-Yau.  But it is still surprised that there is no any restriction on $\delta$ for $A[z;\sigma,\delta]$ being a Calabi-Yau algebra in case $A$ is a noncommutative noetherian Artin-Schelter regular algebra of dimension 2. It is natural to ask whether this result holds without the noetherian assumption, or even for all noncommutative Koszul Artin-Schelter regular algebras.

This paper is organized as follows. In Section 1, we recall some definitions and properties, especially for Koszul Artin-Schelter regular algebras. In Section 2, we construct a sequence pair for any $\sigma$-derivation of a Koszul algebra， where $\sigma$ is a graded automorphism, and discuss the relations between different sequence pairs. In Section 3, we focus on the Koszul Artin-Schelter regular algebras. We prove there is an invariant for each $\sigma$-derivation, compute the Yoneda product of the Ext-algebra of any graded Ore extension, obtain the main result about Nakayama automorphisms and construct a twisted superpotential for a graded Ore extension. In Section 4, we apply our main result to commutative polynomial algebras and Koszul Artin-Schelter regular algebras of dimension 2.

Throughout the paper, $k$ is a fixed field. All vector spaces and algebras are over $k$. Unless otherwise stated, the tensor product $\otimes$ means $\otimes_k$.
\section{Preliminaries}\label{Section preliminaries}

A graded algebra $A=\oplus_{i\in\mathbb{Z}} A_i$ is called \emph{locally finite} if  $\dim A_i<\infty$ for any $i\in\mathbb{Z}$. A locally finite graded algebra $A$ is called \emph{connected} if $A_i=0$ for any $i<0$ and $\dim A_0=1$. In this case, write $\varepsilon_A$ for the augmentation from $A$ to $k$. Let $M=\oplus_{i\in \mathbb{Z}}M_i$ be a right graded $A$-module. Then $n$-th \emph{shift} of $M$ is a right graded $A$-module $M(n)$ with the homogenous space $M(n)_i=M_{n+i}$ for all $i\in\mathbb{Z}$. Let $N$ be a graded $A$-bimodule and $\mu$  a graded automorphism of $A$. The graded twisted  $A$-bimodule $N^{\mu}$ is a graded $A$-bimodule with the $A$-action $a\cdot x\cdot b=ax\mu(b)$ for any $a,b\in A,x\in N$.

For a connected graded algebra $A$, there exists a minimal graded free resolution of the right graded trivial module $k_A$ of $A$,
\begin{equation}\label{general Resolution of k_A}
\cdots\xlongrightarrow{} P_{d}\xlongrightarrow{\partial_d} \cdots\xlongrightarrow{\partial_3} P_{2}\xlongrightarrow{\partial_2} P_1\xlongrightarrow{\partial_1} P_0\xlongrightarrow{\partial_0} k_A\xlongrightarrow{} 0,
\end{equation}
namely, $\Ker \partial_i\subseteq P_{i}A_{\geq 1}$ for any $i\geq 0$. Then the graded vector space $E(A)=\bigoplus_{i\geq0}\uExt^i_A(k_A,k_A)=\bigoplus_{i\geq0}\uHom_A(P_i,k)$ equipped with the Yoneda product is a connected graded algebra, called the \emph{Ext-algebra} of $A$. In the sequel, We denote the Yoneda product by ``$\ast$''.

%\begin{lemma}\label{lemma: maps between ext algebras}\cite[Theorem 1]{SWZ}
%Let $A,A'$ be two connected graded algebras, and $f:A\to A'$ a graded algebra homomorphism. Then there exists a graded algebra homomorphism $E(f):E(A')\to E(A)$.
%\end{lemma}

Let $V$ be a finite dimensional vector space. Write $\tau:V\otimes V\to V\otimes V$ for the usual twisting map. We adopt the notation in \cite{HVZ} of a sequence of linear endmorphisms of $V^{\otimes d}$ for any $d\geq 2$:
$$
\tau_{d}^0=\id_V^{\otimes d},\quad %\tau_{d}^1=\tau\otimes\id_V^{\otimes d-1},\cdots
\tau_d^{i}=(\id_V^{\otimes i-1}\otimes \tau\otimes\id_V^{\otimes d-i-1 })\tau_{d}^{i-1},\quad\text{for any }~ 1\leq i\leq d-1.
$$

\begin{definition}
Let $V$ be a finite dimensional vector space and $\nu:V\to V$ an isomorphism of vector spaces. If an element $\omega\in V^{\otimes d}$ for some $d\geq 2$ such that
$$
\omega=(-1)^{d-1}\tau_{d}^{d-1}(\nu\otimes\id_V^{\otimes d-1})(\omega),
$$
then $\omega$ is called a \emph{$\nu$-twisted superpotential}. In particular, $\omega$ is called a \emph{superpotential} if $\nu=\id_V$.
\end{definition}

Let $\nu$ be a linear automorphism of $V$ and integer $d\geq2$. Define the \emph{partial derivation} of a $\nu$-twisted superpotential $\omega\in V^{\otimes d}$ with respect to $\psi\in (V^*)^{\otimes i}$, where $(-)^*$ is the  $k$-dual of a space and $1\leq i\leq d$,  to be
$$
\partial_{\psi}(\omega)=(\id_V^{\otimes d-i}\otimes \psi)(\omega).
$$

\begin{definition}
Let $V$ be a finite dimensional vector space,  $\omega\in V^{\otimes d}$ a $\nu$-twisted superpotential for some linear automorphism $\nu$ of $V$ and $d\geq 2$. The \emph{$i$-th derivation quotient algebra} $\mathcal{A}(\omega,i)$ of $\omega$ is
$$
\mathcal{A}(\omega,i)=T(V)/(\partial_{\psi}(\omega),\psi\in (V^*)^{\otimes i}),
$$
where $T(V)$ is the tensor algebra over $V$.
\end{definition}

%Let $V$ be a finite dimensional vector space,
Let $T(V)$  be the tensor algebra over $V$ with the usual grading and $R$ a subspace of $V^{\otimes 2}$. Then $A=T(V)/(R)$ is a connected graded algebra, called a \emph{quadratic algebra}. Write $\pi_A$ for the canonical projection from $T(V)$ to $A$.

\begin{definition}
A quadratic algebra $A$ is called \emph{Koszul}, if the right graded trivial module $k_A$ admits a minimal graded free resolution (\ref{general Resolution of k_A}) such that the right graded $A$-module $P_i$ is generated by degree $i$ for any $i\geq0$.
\end{definition}

Let $A=T(V)/(R)$ be a Koszul algebra. Write $W_0=k$, $W_1=V$, $W_2=R$ and for $i\geq 3$,
$$
W_{i}=\bigcap_{0\leq s\leq i-2} V^{\otimes s}\otimes R\otimes V^{\otimes i-s-2}.
$$

Then the following Koszul complex is  a minimal graded free resolution of $k_A$,
$$
\cdots\xlongrightarrow{\partial_{d+1}^A} W_d\otimes A\xlongrightarrow{\partial^A_d}W_{d-1}\otimes A\xlongrightarrow{\partial^A_{d-1}}\cdots\xlongrightarrow{\partial^A_{3}}W_2\otimes A\xlongrightarrow{\partial^A_{2}}W_1\otimes A\xlongrightarrow{\partial^A_{1}}A\xlongrightarrow{\varepsilon_A}k_A\to0,
$$
where $\partial^A_{i}=(\id_V^{\otimes i-1}\otimes m_A)(\id_V^{\otimes i-1}\otimes{\pi_A}_{\mid V}\otimes\id_A)=\id_V^{\otimes i-1}\otimes m_A$ and $m_A$ is the multiplication of $A$ for $i\geq 1$.
\begin{remark}
In the sequel, we treat $V$ as the vector space $V$ or the homogeneous space $A_1$ of a Koszul algebra $A$ freely. So we write $m_A$ shortly for the liner map $m_A({\pi_A}_{\mid V}\otimes \id_A):V\otimes A\to A$ for convenience.
\end{remark}

As vector spaces, the Ext-algebra of $A$ is
$$
E(A)=\bigoplus_{i\geq0}\uHom_A(W_i\otimes A,k_A)\cong \bigoplus_{i\geq0} W_i^{*}.
$$

\begin{remark}
For Koszul algebras, researchers always use their Koszul dual as a common tool, which are also isomorphic to their Ext-algebras. In this paper, we use the language of Ext-algebras to study the Nakayama automorphisms of graded Ore extensions, since it is convenient to obtain some induced maps.
%For a Koszul algebra, it is well known its Ext-algebra is isomorphic to its Koszul dual $A^!$, which is a common tool to study Koszul algebras. However, we use the language of Ext-algebras to consider the Nakayama automorphisms of graded Ore extensions in this paper, since there
\end{remark}
%In this section, we fix basic notations and recall definitions for this paper.
%

\begin{definition}
A connected graded algebra $C$ is \emph{Artin-Schelter regular}  (\emph{AS-regular}, for short) of dimension $d$, if it  has finite global dimension $d$, $\dim\left(\uExt_{C}^{d}(k_C, C_C)\right)=1$ and $\dim\left(\uExt_{C}^{i}(k_C, C_C)\right)=0$ for $i\neq d$.
\end{definition}
AS-regular algebras have an important homological invariant.

\begin{theorem} \cite[Proposition 4.5 (b)]{YZ}\label{thm: Nakayama aut of AS regular algebras} If $C$ is AS-regular of dimension $d$, then there exists a unique graded automorphism $\mu_C$ of $C$ such that
$$\uExt^i_{C^e}(C,C^e)\cong\left\{\begin{array}{ll} 0&i\neq d,\\C^{\mu_C}(l)&i=d,   \end{array}\right. \quad \text{as graded $C$-bimodules},$$
where $C^e=C\otimes C^{op}$ is the enveloping algebra of $C$ for some $l\geq 0$.
\end{theorem}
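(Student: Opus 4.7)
The plan is to build a minimal graded bimodule resolution of $C$, dualize it, and extract $\mu_C$ from the twist that AS-regularity forces on the top cohomology group.

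First I would construct a minimal graded projective resolution $Q_\bullet\to C$ of $C$ as a $C^e$-module. Since $C$ is connected graded of finite global dimension $d$, such a resolution exists and has length exactly $d$; moreover one can arrange that $Q_i\cong C\otimes V_i\otimes C$ for finite-dimensional graded spaces $V_i$ with $V_i\cong \Tor_i^C(k_C,{}_Ck)$, via a bar-type construction truncated using the $\gldim$ bound. Tensoring $Q_\bullet$ over $C$ with $k_C$ on either side recovers minimal graded free resolutions of $k_C$ and of ${}_Ck$ respectively, so the AS-regularity condition forces $V_d$ to be one-dimensional concentrated in a single degree, giving $Q_d\cong C\otimes C(-l)$ as a graded bimodule for some $l\geq 0$.

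Next I would apply $\uHom_{C^e}(-,C^e)$ to $Q_\bullet$ and take cohomology. The vanishing of the one-sided Ext groups $\uExt^i_C(k_C,C)$ for $i\neq d$, together with a comparison argument using the factored shape $Q_i\cong C\otimes V_i\otimes C$, yields $\uExt^i_{C^e}(C,C^e)=0$ for $i\neq d$. In top degree the answer is a graded $C$-bimodule that is free of rank one as a left $C$-module, with generator $e$ in degree $-l$ coming from dualizing $Q_d$. The right $C$-action on $e$ differs from the left one by a graded $k$-linear self-map $\mu_C$ of $C$; compatibility with products forces $\mu_C$ to be an algebra automorphism, and one obtains $\uExt^d_{C^e}(C,C^e)\cong C^{\mu_C}(l)$. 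Uniqueness of $\mu_C$ follows by matching Hilbert series to pin down $l$ and observing that any bimodule self-isomorphism of $C^{\mu_C}(l)$ is given by conjugation with a homogeneous unit of degree $0$, which must be a nonzero scalar since $C_0=k$.

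The main obstacle is passing from the bimodule structure on $\uExt^d_{C^e}(C,C^e)$ to an honest algebra automorphism $\mu_C$. This amounts to tracking how the two factors of $C^e=C\otimes C^{op}$ act on the chosen generator under the duality and verifying multiplicativity of the resulting right-action twist, which is the technical heart of the statement. An alternative route avoiding this bookkeeping is to invoke Van den Bergh's rigid dualizing complex machinery, which for an AS-Gorenstein connected graded algebra directly produces a balanced dualizing complex of the form $C^{\mu_C}(l)[d]$; the statement then follows by shifting to $\uExt^d_{C^e}(C,C^e)$.
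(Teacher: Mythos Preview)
The paper does not supply its own proof of this theorem: it is quoted verbatim as \cite[Proposition 4.5(b)]{YZ} and used as background input, so there is no in-paper argument to compare your proposal against.

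That said, your outline is a sound strategy and is close in spirit to how the cited reference proceeds. Yekutieli and Zhang work via balanced dualizing complexes: for a connected graded AS-Gorenstein algebra they identify the rigid dualizing complex as $C^{\mu_C}(l)[d]$ for a graded automorphism $\mu_C$, and the $\uExt^i_{C^e}(C,C^e)$ statement is then read off. This is exactly the ``alternative route'' you mention at the end, so your second approach is essentially the one in the literature. Your first approach, building the minimal bimodule resolution $Q_\bullet$ by hand and dualizing, is more elementary and avoids the dualizing-complex formalism; the trade-off is that the step where you assert $\uExt^d_{C^e}(C,C^e)$ is free of rank one on the left needs a bit more justification than you give. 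One typically argues via the isomorphism $\uHom_{C^e}(C\otimes V_i\otimes C,\,C^e)\cong C\otimes V_i^*\otimes C$ and then tensors down to compare with the one-sided $\uExt^\bullet_C(k_C,C)$, using a spectral sequence or a direct filtration argument to transport the AS vanishing to the bimodule level. Once rank-one freeness on one side is established, your extraction of $\mu_C$ and the uniqueness argument are correct.
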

The automorphism $\mu_C$ is called the \emph{Nakayama automorphism of $C$}. In particular, if the Nakayama automorphism of an AS-regular algebra is the identity map, then it is a Calabi-Yau (CY, for short) algebra (see \cite{Gin,RRZ1}).

The Ext-algebras of AS-regular algebras also carry important information. Let $E=\oplus_{i\in \mathbb{Z}}E^i$ be a finite dimensional graded algebra. We say $E$ is a \emph{graded Frobenius algebra}, if there exists a nondegenerate associative graded bilinear form $\langle -,-\rangle: E\otimes E\to k(d)$ for some $d\in \mathbb{Z}$. In particular, there exits a graded automorphism $\mu_E$ of $E$ such that
$$
\langle \alpha,\beta\rangle=(-1)^{ij}\langle \beta,\mu_E(\alpha)\rangle,
$$
for any $\alpha\in E^i,\beta\in E^j$. We call the automorphism $\mu_E$ is the (classical) \emph{Nakayama automorphism} of the graded Frobenius algebra $E$. (see \cite{Sm} for details).

%\begin{theorem}\label{thm: Ext of AS is Frobenius}
%\cite[Corollary D]{LPWZ4} The algebra $C$ is AS-regular if and only if the Ext-algebra $E(C)$ is a graded Frobenius algebra.
%\end{theorem}

In this paper, we mainly consider Koszul AS-regular algebras. We list some important results about Koszul AS-regular algebras below.

\begin{theorem}\label{thm: properties of Koszul regular algebras}Let $A$ be a Koszul AS-regular algebra of dimension $d$ and $\mu_A$ the Nakayama automorphism of $A$.
\begin{enumerate}
\item \cite[Proposition 5.10]{Sm} The Ext-algebra $E(A)$ of $A$ is a graded Frobenius algebra.
\item\cite[Proposition 3]{V} Let $\mu_E$ be the (classical) Nakayama automorphism of $E(A)$, then
    %$
%    {\mu_E}=({\mu_A})^{\!}.
%    $
    $$
    {\mu_E}_{|E^1(A)}=({\mu_A}_{|A_1})^*,
    $$
    where $E^1(A)$ is identified with $V^*=A_1^*$.
\item \cite[Lemma 4.3]{HVZ}
Any nonzero element $\omega\in W_d$ is a ${\mu_A}_{\mid V}$-twisted superpotential.

\item \cite[Theorem 11]{DV}\cite[Theorem 4.4(i)]{HVZ} For any nonzero element $\omega\in W_d$, $A\cong \mathcal{A}(\omega,d-2)$.
\end{enumerate}
\end{theorem}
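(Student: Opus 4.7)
The plan is to treat the four parts as a coordinated package built on the minimal Koszul resolution $W_\bullet \otimes A \to k_A$, exploiting AS-regularity to conclude that $W_d$ is one-dimensional and that there are perfect pairings $W_i \otimes W_{d-i} \to W_d$ induced by the multiplicative description of $W_i$ as intersections inside $V^{\otimes i}$. I would identify $E(A)$ with $\bigoplus_i W_i^*$ and compute the Yoneda product concretely as the dual of the canonical inclusions $W_{i+j} \hookrightarrow W_i \otimes W_j$ coming from the intersection definition, so that everything reduces to linear algebra on the $W_i$.

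For (1), applying $\uHom_A(-, k_A)$ to the Koszul resolution and using AS-regularity to identify $\uExt^d_A(k_A, A)$ with a one-dimensional space concentrated in a single internal degree yields a nondegenerate graded bilinear form $\langle -,-\rangle\colon E^i(A)\otimes E^{d-i}(A)\to k$ coming from the composition $W_i^*\otimes W_{d-i}^*\to W_d^*\cong k$. Associativity is automatic from the associativity of the Koszul inclusions, so $E(A)$ is Frobenius. For (2), unpacking the definition $\langle \alpha,\beta\rangle=(-1)^{ij}\langle \beta,\mu_E(\alpha)\rangle$ with $\alpha\in E^1=V^*$ and $\beta\in E^{d-1}$ reduces the computation to comparing the two Koszul inclusions $W_d\hookrightarrow V\otimes W_{d-1}$ and $W_d\hookrightarrow W_{d-1}\otimes V$; the bimodule identification $\uExt^d_A(k_A,A)\cong A^{\mu_A}(l)$ from Theorem \ref{thm: Nakayama aut of AS regular algebras} forces the comparison to be governed by ${\mu_A}_{\mid V}$ acting on the outermost slot, and dualizing gives ${\mu_E}_{\mid E^1(A)}=({\mu_A}_{\mid A_1})^*$.

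For (3), the same comparison is essentially the twisted superpotential identity. Since $W_d$ is one-dimensional and sits inside both $V\otimes W_{d-1}$ and $W_{d-1}\otimes V$, the unique nonzero $\omega$ is carried to a scalar multiple of itself by the cyclic reordering $\tau_d^{d-1}$, and the bimodule structure from (2) pins that scalar down to $(-1)^{d-1}$ with an outermost twist by ${\mu_A}_{\mid V}$ — exactly the ${\mu_A}_{\mid V}$-twisted superpotential equation. For (4), the containment $\partial_\psi(\omega)\in R$ for every $\psi\in (V^*)^{\otimes d-2}$ is immediate from $W_d\subseteq V^{\otimes d-2}\otimes R$, while the reverse containment — that these partial derivatives span $R$ — follows from the nondegeneracy of the pairing $V^{\otimes d-2}\otimes (V^*)^{\otimes d-2}\to k$ applied to the projection $W_d\to R$, which is surjective because $W_d$ is the iterated intersection and cannot be smaller on any single coordinate without contradicting the Koszul dimension count.

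The main obstacle will be the careful bookkeeping of signs and orderings in the Yoneda product. Both the factor $(-1)^{d-1}$ in the twisted superpotential definition and the factor $(-1)^{ij}$ in the Nakayama equation arise from the graded sign conventions in the Ext-algebra, and the appearance of $\mu_A$ at the outermost slot of $\omega$ is the direct manifestation of the bimodule twist in $\uExt^d_A(k_A,A)\cong A^{\mu_A}(l)$. Getting these matched simultaneously is the heart of (2) and (3); once they are in place, (1) and (4) follow formally from Koszulity and the one-dimensionality of $W_d$.
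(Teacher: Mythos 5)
The paper gives no proof of this theorem: all four parts are quoted verbatim from the literature (Smith, Van den Bergh, Dubois-Violette, and He--Van Oystaeyen--Zhang), so there is no in-paper argument to compare yours against; your sketch has to stand on its own. It identifies the right objects --- $E(A)\cong\bigoplus_i W_i^*$, the pairings $W_i^*\otimes W_{d-i}^*\to W_d^*$, and $\dim W_d=1$ --- but the two steps carrying all the content are asserted rather than proved. In (3), the claim that ``the unique nonzero $\omega$ is carried to a scalar multiple of itself by the cyclic reordering $\tau_d^{d-1}$'' is false as stated: for the quantum plane $k\langle x_1,x_2\rangle/(x_1x_2-qx_2x_1)$ with $q^2\neq 1$ one has $d=2$, $\omega=x_1\otimes x_2-qx_2\otimes x_1$, and $\tau(\omega)=-q\left(x_1\otimes x_2-q^{-1}x_2\otimes x_1\right)\notin k\omega$. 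The entire content of \cite[Lemma 4.3]{HVZ} is that only the composite $\tau_d^{d-1}({\mu_A}_{\mid V}\otimes\id_V^{\otimes d-1})$ preserves the line $W_d$; establishing this requires the bimodule Koszul resolution of $A$ and the identification $\uExt^d_{A^e}(A,A^e)\cong A^{\mu_A}(l)$, and is not a formal consequence of $\dim W_d=1$ together with $W_d\subseteq (V\otimes W_{d-1})\cap(W_{d-1}\otimes V)$. Likewise in (2), the sentence ``the bimodule identification forces the comparison to be governed by ${\mu_A}_{\mid V}$ acting on the outermost slot'' is exactly the statement to be proved; Van den Bergh's argument goes through a duality between $\uExt_{A^e}(A,A^e)$ and the Frobenius form on $E(A)$, which your sketch does not supply.

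Two smaller but real problems. In (1), associativity of the form is indeed formal, but nondegeneracy of $E^i(A)\otimes E^{d-i}(A)\to E^d(A)$ is precisely where the AS-Gorenstein condition enters, via a comparison of $\uExt_A(k_A,A_A)$ with Yoneda products in $\uExt_A(k_A,k_A)$; your sketch treats it as automatic. In (4), there is no surjective ``projection $W_d\to R$'' --- $W_d$ is one-dimensional while $\dim R=\dim W_{d-2}$ is in general larger --- and the phrase about the ``Koszul dimension count'' is not an argument. What must be shown is that the contractions $(\id_V^{\otimes 2}\otimes\psi)(\omega)$ for $\psi\in(V^*)^{\otimes d-2}$ span $R$, and the clean route is the perfect pairing $E^2(A)\otimes E^{d-2}(A)\to E^d(A)$ furnished by part (a). So the architecture is reasonable, but each part still lacks its decisive step.
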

\begin{remark}
If $A$ is a Koszul AS-regular algebra of dimension $d$, then $\dim W_d=1$. So an element $\omega\in W_d$ is nonzero is equivalent to it is a basis of $W_d$.
\end{remark}
%We give a brief introduction for graded Frobenius algebras.
%
%
%

%\cite[Theorem 4.2]{RRZ2} If $A$ is noetherian AS-regular of type $(d,l)$ generated by degree $1$. Let $\mu_A$ and $\mu_E$ be Nakayama automorphisms of $A$ and $E(A)$, respectively. Then
%    $$
%    {\mu_E}_{|E^1(A)}=({\mu_A}_{|A_1})^*,
%    $$
%    where $E^1(A)$ is identified with $A_1^*$.

%

\section{A sequence pair}
Let $V$ be a vector space with a basis $\{x_1,x_2,\cdots,x_n\}$, $T(V)$ the tensor algebra over $V$, and $A=T(V)/(R)$ a Koszul algebra, where $R$ is a subspace of $V^{\otimes 2}$. Recall the Koszul complex as follows
\begin{equation}\label{resolution of k_A}
\cdots\xlongrightarrow{\partial_{d+1}^A} W_d\otimes A\xlongrightarrow{\partial^A_d}W_{d-1}\otimes A\xlongrightarrow{\partial^A_{d-1}}\cdots\xlongrightarrow{\partial^A_{3}}W_2\otimes A\xlongrightarrow{\partial^A_{2}}W_1\otimes A\xlongrightarrow{\partial^A_{1}}A\xlongrightarrow{\varepsilon_A}k_A\to0,
\end{equation}
where $W_0=k$, $W_1=V$, $W_2=R$ and $
W_{i}=\bigcap_{0\leq s\leq i-2} V^{\otimes s}\otimes R\otimes V^{\otimes i-s-2}
$ for $i\geq 3$.

Let $\overline{\sigma}$ be a graded automorphism of $A$ and $\overline{\delta}$ a degree-one $\overline{\sigma}$-derivation of $A$. Then we have a graded Ore extension $B=[z;\overline{\sigma},\overline{\delta}]$ with $\deg z=1$. Clearly, $B$ is a quotient algebra of the tensor algebra $T(V\oplus k\{z\})$ and a Koszul algebra. Write $\pi_B$ for the canonical projection from $T(V\oplus k\{z\})$ to $B$.

Write $\sigma=\overline{\sigma}_{\mid V}\in GL(V)$. It is easy to know that $\sigma^{\otimes i}(W_i)=W_i$ for each $i\geq 0$  and $\sigma_{T}:=\oplus_{i=0}^{\infty}\sigma^{\otimes i}$ is a graded automorphism of $T(V)$ such that $\overline{\sigma}$ is induced by $\sigma_{T}$.

Choose a linear map $\delta:V\to V\otimes V$ such that $\pi_A\delta=\overline{\delta}_{\mid V}$. In fact, $\delta$ extends to a degree-one $\sigma_T$-derivation of $T(V)$ (also denoted by $\delta$) in a unique way, and
\begin{equation}\label{condition for delta}
\delta(R)\subseteq R\otimes V+V\otimes R.
\end{equation}
So $\overline{\delta}$ can be induced by $\delta$. All $\overline{\sigma}$-derivations can be obtained in this way.

The condition (\ref{condition for delta}) provides an approach to decomposing $\delta$ into two parts, that is, there exist two linear maps
$\delta_{2,r}: R\to R\otimes V,\delta_{2,l}: R\to V\otimes R.
$ such that
\begin{equation}\label{definition of delta_2}
\delta_{\mid R}=\delta_{2,r}+\delta_{2,l}.
\end{equation}
%Although $B$ is defined by $\overline{\sigma}$ and $\overline{\delta}$, all of our procedure  work on the tensor algebra $T(V)$ and the $\sigma_T$-derivation $\delta$. To handle $\delta$,
The decomposition can be realized as follows. One can choose a basis $\{r_1,r_2,\cdots,r_t\}$ of $R$, obtain that
\begin{equation}\label{decomposition of delta}
\delta(r_i)=\sum_{j=1}^t r_j\otimes \alpha^{i}_{j}+\sum_{j=1}^t \beta^{i}_{j}\otimes r_j\in R\otimes V+V\otimes R,
\end{equation}
for some $\alpha^{i}_j,\beta^{i}_j\in V$ and $j,i=1,\cdots,t$, and then define
\begin{equation*}%
\qquad\delta_{2,r}(r_i)=\sum_{j=1}^t r_j\otimes \alpha^{i}_{j},\qquad
\delta_{2,l}(r_i)=\sum_{j=1}^t \beta^{i}_{j}\otimes r_j, \qquad \forall i=1,\cdots,t.
\end{equation*}

\begin{remark}
It is clear that the choice of $\delta_{2,r}$ and $\delta_{2,l}$ for $\delta$ is not unique, which depends on the decomposition (\ref{decomposition of delta}).
\end{remark}

\subsection{Minimal free resolutions}

Now we begin to construct a minimal free resolution of $k_B$. In the sequel, write $\delta_{1,r}=\delta_{1,l}=\delta_{|V}$, and  $\lambda_z$ (resp. $\rho_z$) for the left (resp. right) multiplication of $z$ on $B$.

Applying $-\otimes_AB$ to (\ref{resolution of k_A}), one obtains an exact sequence,
$$
\cdots\xlongrightarrow{\partial_{d+1}} W_d\otimes B\xlongrightarrow{\partial_d}W_{d-1}\otimes B\xlongrightarrow{\partial_{d-1}}\cdots\xlongrightarrow{\partial_{3}}W_2\otimes B\xlongrightarrow{\partial_{2}}W_1\otimes B\xlongrightarrow{\partial_{1}}B\xlongrightarrow{\varepsilon_A\otimes_AB}B/A_{\geq1}B\to0,
$$
where $\partial_{i}=\partial^A_{i}\otimes_AB=(\id_V^{\otimes i-1}\otimes m_B)(\id_V^{\otimes i-1}\otimes\pi_B\otimes\id_B)=\id_V^{\otimes i-1}\otimes m_B$ and $m_B$ is the multiplication of $B$ for $i\geq1$.

\begin{lemma}\label{lemma: construction of delta_r}
There exist graded linear maps $\delta_{i,r}:W_i\to W_i\otimes V$ for $i\geq 2$ such that the following diagram is commutative
$${\Small
\xymatrix{
\cdots\ar[r] & W_d\otimes B(-1) \ar[r]^{\partial_d}\ar[d]^{\phi_d} &W_{d-1}\otimes B(-1)\ar[r]^(0.65){\partial_{d-1}}\ar[d]^{\phi_{d-1}}&\cdots\ar[r]^(0.35){\partial_{2}} &W_1\otimes B(-1)\ar[r]^(0.55){\partial_{1}}\ar[d]^{\phi_1} &B(-1)\ar[r]^(0.38){{\varepsilon_A\otimes_AB}}\ar[d]^{\phi_0}&B/A_{\geq1}B(-1)\ar[d]^{\lambda_z}\ar[r]&0
\\
\cdots\ar[r] & W_d\otimes B \ar[r]^{\partial_d} &W_{d-1}\otimes B\ar[r]^(0.6){{\partial_{d-1}}}&\cdots\ar[r]^(0.41){{\partial_{2}}} &W_1\otimes B\ar[r]^(0.6){{\partial_{1}}} &B\ar[r]^(0.38){{\varepsilon_A\otimes_AB}}&B/A_{\geq1}B\ar[r]&0,
}
}
$$
where graded right $B$-module homomorphisms
%\begin{align*}
%&\phi_0=z\cdot-, \\
%%&\phi_1(w_1\otimes b)=\sigma(w_1)\otimes zb+\delta(w_1)b,\\
%&\phi_{i}(w_i\otimes b)=\sigma^{\otimes i}(w_i)\otimes zb+\delta_{i,1}(w_i)b,
%\end{align*}
\begin{align*}
&\phi_0=\lambda_z, \\
%&\phi_1(w_1\otimes b)=\sigma(w_1)\otimes zb+\delta(w_1)b,\\
&\phi_{i}=\sigma^{\otimes i}\otimes \lambda_z+(\id_V^{\otimes i}\otimes m_B)\left(\delta_{i,r}\otimes\id_B\right), \quad i\geq 1.
\end{align*}
%Here, $\delta_{i,1}$ represents $(\id_V^{\otimes {i}}\otimes \pi_B)\delta_{i,1}$ for short.

Moreover, as linear maps $W_i\to W_{i-1}\otimes B$,
\begin{equation}\label{condition for delta_r}
(\id_V^{\otimes i-1} \otimes m_B)\delta_{i,r}=(\id_V^{\otimes i-1}\otimes m_B)\left(\sigma^{\otimes i-1}\otimes\delta+\delta_{i-1,r}\otimes \id_V\right),
\quad i\geq 2.
\end{equation}
\end{lemma}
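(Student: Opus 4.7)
The plan is to proceed by induction on $i$. For $i=1$, set $\delta_{1,r}:=\delta_{|V}$; unwinding $\partial_1\phi_1$ and $\phi_0\partial_1$ on $v\otimes b$ and using the Ore relation $zv=\sigma(v)z+\delta(v)$ in $B$ makes the rightmost square commute. For $i=2$, the condition \eqref{condition for delta}, namely $\delta(R)\subset R\otimes V+V\otimes R$, provides a decomposition $\delta_{|R}=\delta_{2,r}+\delta_{2,l}$ with components in $R\otimes V$ and $V\otimes R$. Then \eqref{condition for delta_r} for $i=2$ holds because $(\id_V\otimes m_B)$ kills the $\delta_{2,l}$-part (the elements of $R$ vanish in $B$), leaving $(\id_V\otimes m_B)\delta_{2,r}=(\id_V\otimes m_B)(\delta\otimes\id_V+\sigma\otimes\delta)_{|R}=(\id_V\otimes m_B)(\sigma\otimes\delta+\delta_{1,r}\otimes\id_V)$.

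For the inductive step with $i\geq 3$, suppose $\phi_{i-1}$ is of the stated form. A direct expansion of $\partial_i\phi_i=\phi_{i-1}\partial_i$ on $w\otimes 1$, using $w\in W_i\subset W_{i-1}\otimes V$, the inductive form of $\phi_{i-1}$, and the Ore relation, reduces commutativity of the $i$-th square to the identity $(\id_V^{\otimes i-1}\otimes m_B)\delta_{i,r}=(\id_V^{\otimes i-1}\otimes m_B)\Delta_i$ on $W_i$, where $\Delta_i:=\sigma^{\otimes i-1}\otimes\delta+\delta_{i-1,r}\otimes\id_V$. Since $\Ker(V\otimes V\to B)=R$ (the purely $V$-valued degree-$2$ relations of the Ore extension coincide with those of $A$), finding a suitable $\delta_{i,r}:W_i\to W_i\otimes V$ is equivalent to establishing the containment $\Delta_i(W_i)\subset W_i\otimes V+V^{\otimes i-1}\otimes R$ inside $V^{\otimes i+1}$.

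I would prove the containment via the lifting property of free resolutions. Because $B$ is free over $A$ by PBW for Ore extensions, the top and bottom rows are free resolutions of $B/A_{\geq 1}B(-1)$ and $B/A_{\geq 1}B$ respectively, so $\lambda_z$ lifts to some graded $B$-linear chain map $\phi_\bullet$. For degree reasons $\phi_i(w\otimes 1)\in(W_i\otimes B)_{i+1}=W_i\otimes B_1=W_i\otimes V\oplus W_i\otimes kz$, using PBW in degree one. Decompose $\phi_i(w\otimes 1)=\xi_V(w)+\xi_z(w)$. The PBW splitting $B_2=A_2\oplus Vz\oplus kz^2$ then cleanly separates $\partial_i\phi_i=\phi_{i-1}\partial_i$ into a $Vz$-part that forces $\xi_z(w)=\sigma^{\otimes i}(w)\otimes z$ and an $A_2$-part that forces $(\id_V^{\otimes i-1}\otimes m_B)\xi_V(w)=(\id_V^{\otimes i-1}\otimes m_B)\Delta_i(w)$ with $\xi_V(w)\in W_i\otimes V$. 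Setting $\delta_{i,r}(w):=\xi_V(w)$, made linear in $w$ by selecting lifts on a basis of $W_i$, closes the induction.

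The main obstacle is the containment $\Delta_i(W_i)\subset W_i\otimes V+V^{\otimes i-1}\otimes R$; once available, everything else is formal. The key idea enabling it is that PBW rigidly separates the $z$- and $A$-contributions in $B_1$ and $B_2$, so the abstract chain-map lift, guaranteed by general homological algebra, is automatically of the prescribed form. Note that $\delta_{i,r}$ is determined only modulo the inclusion $W_{i+1}\hookrightarrow W_i\otimes V$, in line with the paper's subsequent remark that the sequence pair is not unique.
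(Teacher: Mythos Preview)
Your proposal is correct and follows essentially the same route as the paper: verify the base squares $i=0,1,2$ by hand using the Ore relation and the decomposition $\delta_{|R}=\delta_{2,r}+\delta_{2,l}$, then invoke the Comparison Theorem to produce an abstract lift $\phi_i$ for $i\geq 3$, and finally read off $\delta_{i,r}$ from $\phi_i$ by a degree argument. Your PBW splitting of $B_1$ and $B_2$ to force $\xi_z(w)=\sigma^{\otimes i}(w)\otimes z$ makes explicit precisely the step the paper dismisses as ``clear'' when it asserts $(\phi_i-\sigma^{\otimes i}\otimes\lambda_z)(W_i)\subseteq W_i\otimes V$; so if anything your write-up is more careful here. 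One minor remark: the clause ``made linear in $w$ by selecting lifts on a basis'' is unnecessary, since $\phi_i$ is already $B$-linear and the PBW decomposition $B_1=V\oplus kz$ is $k$-linear, so $\xi_V$ is automatically linear.
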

\begin{proof}
It's easy to check that $\lambda_z(\varepsilon_A\otimes_A B)=(\varepsilon_A\otimes_AB)\phi_0$ and $\phi_0\partial_1=\partial_1\phi_1$, since
\begin{align*}
&\phi_0\partial_1=\lambda_zm_B=m_B(\sigma\otimes \lambda_z+(\pi_B\otimes\id_B)(\delta\otimes\id_B))=m_B(\sigma\otimes \lambda_z)+m_B(\overline{\delta}\otimes \id_B),\\
&\partial_1\phi_1=m_B(\sigma\otimes\lambda_z)+m_B(\id_V\otimes m_B)(\delta\otimes \id_B)=m_B(\sigma\otimes\lambda_z)+m_B(\overline{\delta}\otimes \id_B).
\end{align*}
By the construction of $\delta_{2,r}$ and $\delta_{2,l}$, we have
$$
(\id_V\otimes m_B)\left(\sigma\otimes\delta+\delta\otimes \id_V\right)=
(\id_V\otimes m_B)\delta=(\id_V\otimes m_B)(\delta_{2,r}+\delta_{2,l})=
(\id_V\otimes m_B)\delta_{2,r},
$$
in case restricted on $W_2=R$. One obtains
\begin{align*}
\partial_2\phi_2&=(\id_V\otimes m_B)(\sigma^{\otimes 2}\otimes \lambda_z)+(\id_V\otimes m_B)(\id_V^{\otimes2}\otimes m_B)(\delta_{2,r}\otimes \id_B)\\
&=(\id_V\otimes m_B)(\sigma^{\otimes 2}\otimes \lambda_z)+(\id_V\otimes m_B)(\id_V^{\otimes2}\otimes m_B)(\delta_{2,r}\otimes \id_B+\delta_{2,l}\otimes\id_B)\\
&=(\id_V\otimes m_B)(\sigma^{\otimes 2}\otimes \lambda_z)+(\id_V\otimes m_B)(\id_V^{\otimes2}\otimes m_B)(\delta \otimes\id_B),\\
\phi_1\partial_2&=(\sigma\otimes \lambda_z)(\id_V\otimes m_B)+(\id_V\otimes m_B)(\delta\otimes \id_B)(\id_V\otimes m_B)\\
&=(\id_V\otimes m_B)(\sigma^{\otimes 2}\otimes \lambda_z)+(\id_V\otimes m_B)(\id_V^{\otimes2}\otimes m_B)\left((\sigma\otimes\delta)\otimes\id_B+(\delta\otimes\id_V)\otimes \id_B\right)\\
&=(\id_V\otimes m_B)(\sigma^{\otimes 2}\otimes \lambda_z)+(\id_V\otimes m_B)(\id_V^{\otimes2}\otimes m_B)(\delta\otimes\id_B).
\end{align*}
So $\partial_2\phi_2=\phi_1\partial_2$. By the Comparison Theorem, there exists a graded $B$-module homomorphism $\phi_i: W_i\otimes B(-1)\to W_i\otimes B$ such that $\phi_i\partial_{i+1}=\partial_{i+1}\phi_{i+1}$  for any $i\geq 3$.

%Let $i\geq 3$. Suppose there exists a linear map $\delta_{j,1}:W_j\to W_j\otimes V$ such that
%$$\phi_{j}=\sigma^{\otimes j}\otimes \lambda_z+(\id_V^{\otimes j}\otimes m_B)(\delta_{j,1}\otimes\id_B),$$
%and $\phi_{j-1}\partial_{j}=\partial_{j}\phi_{j}$ for any $j< i$.

It is clear that
$(\phi_i-\sigma^{\otimes i}\otimes\lambda_z)(W_i\otimes B_0)\subseteq W_i\otimes V,$ so write $\delta_{i,r}$ for the linear map $\phi_i-\sigma^{\otimes i}\otimes\lambda_z$ restricted on $W_i$ for any $i\geq 3$. One obtains that
$$
\phi_i=\sigma^{\otimes i}\otimes \lambda_z+(\id_V^{\otimes i}\otimes m_B)(\delta_{i,r}\otimes\id_B),\quad i\geq 3.
$$
Then $\partial_i\phi_i=(\id_V^{\otimes i-1}\otimes m_B)(\sigma^{\otimes i}\otimes\lambda_z)+(\id_V^{\otimes i-1}\otimes m_B)(\id_V^{\otimes i}\otimes m_B)(\delta_{i,r}\otimes \id_B)$, and
\begin{align*}
\phi_{i-1}\partial_i&=(\sigma^{\otimes i-1}\otimes \lambda_z)(\id_V^{\otimes i-1}\otimes m_B)
+(\id_V^{\otimes i-1}\otimes m_B)\left(\delta_{i-1,r}\otimes\id_B\right)(\id_V^{\otimes i-1}\otimes m_B)\\
&=(\id_V^{\otimes i-1}\otimes m_B)(\sigma^{\otimes i}\otimes\lambda_z)+
(\id^{\otimes i-1}_V\otimes m_B)(\id^{\otimes i}_V\otimes m_B)(\sigma^{\otimes i-1}\otimes\delta\otimes\id_B)\\
&\quad +(\id^{\otimes i-1}_V\otimes m_B)(\id^{\otimes i}_V\otimes m_B)(\delta_{i-1,r}\otimes\id_V\otimes\id_B).
\end{align*}
Since $\partial_i\phi_{i}=\phi_{i-1}\partial_i$, as linear maps $W_i\to W_{i-1}\otimes B$,
\begin{equation*}
\qquad(\id_V^{\otimes i-1}\otimes m_B)\delta_{i,r}=(\id_V^{\otimes i-1}\otimes m_B)\left(\sigma^{\otimes i-1}\otimes\delta+\delta_{i-1,r}\otimes \id_V\right),\qquad \forall i\geq 3,\qedhere
\end{equation*}
\end{proof}

\begin{remark}\label{remark: choice of delta_r}
By the proof of Lemma \ref{lemma: construction of delta_r}, any
linear map from $W_i$ to $W_i\otimes V$ satisfying (\ref{condition for delta_r}) can be chosen to be $\delta_{i,r}$ for $i\geq 2$. So the sequence $\{\delta_{i,r}\mid i\geq1\}$ of linear maps is not unique for the map $\overline{\delta}$, even when $\delta_{2,r}$ is fixed.
\end{remark}

By \cite[Theorem 1]{GS} or \cite[Lemma 2.4]{Phan}, one obtains a minimal free resolution of $k_B$.

\begin{lemma}\label{lemma: minimial resolution of k_B}
The following complex is exact:
\begin{equation}\label{resolution of k_B}
\begin{aligned}
\cdots\rightarrow
W_{d}\otimes& B(-1)\oplus W_{d+1}\otimes B
\xlongrightarrow{\left({\scriptscriptstyle\begin{array}{ll}{\scriptstyle -\partial_{d} }& {\scriptstyle0} \\ {\scriptstyle\phi_{d} }&{\scriptstyle \partial_{d+1}}\end{array}}\right)}
W_{d-1}\otimes B(-1)\oplus W_d\otimes B\xlongrightarrow{}\cdots\\
&\xlongrightarrow{}
W_1\otimes B(-1)\oplus W_{2}\otimes B \xlongrightarrow{\left({\scriptscriptstyle\begin{array}{ll}{\scriptstyle -\partial_{1} }& {\scriptstyle0} \\ {\scriptstyle\phi_1 }&{\scriptstyle \partial_{2}}\end{array}}\right)}
B(-1)\oplus W_1\otimes B\xlongrightarrow{\left(\begin{array}{ll}{\scriptstyle \phi_0}& {\scriptstyle \partial_1}\end{array}\right)}
B\xlongrightarrow{\varepsilon_B}
k_B\xlongrightarrow{} 0,
\end{aligned}
\end{equation}
for some sequence $\{\delta_{i,r}\}$ of linear maps as in Lemma \ref{lemma: construction of delta_r}.
\end{lemma}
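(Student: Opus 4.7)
The plan is to recognize the displayed complex as the mapping cone of the chain map $\phi_{\bullet}:(W_{\bullet}\otimes B)(-1)\to W_{\bullet}\otimes B$ constructed in Lemma \ref{lemma: construction of delta_r}, and to deduce exactness and minimality from the standard long exact sequence of a cone.

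First I would observe that $B=A[z;\overline{\sigma},\overline{\delta}]$ is free as a left $A$-module with basis $\{1,z,z^{2},\ldots\}$, so the functor $-\otimes_{A}B$ is exact. Applied to the Koszul resolution (\ref{resolution of k_A}) of $k_{A}$ it produces a complex $W_{\bullet}\otimes B$ that is acyclic in positive degrees and satisfies $H_{0}=A/A_{\geq 1}\otimes_{A}B=B/A_{\geq 1}B$. Since $zv\equiv 0\pmod{A_{\geq 1}B}$ for every $v\in V$, the quotient $B/A_{\geq 1}B$ is, as a graded vector space, freely spanned by the classes of $1,z,z^{2},\ldots$; the map induced on $H_{0}$ by $\phi_{0}=\lambda_{z}$ is left multiplication by $z$ on this $k[z]$, and its cokernel is the trivial right $B$-module $k_{B}$.

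Next, the block-matrix differentials displayed in the lemma are precisely the differentials of the mapping cone $\operatorname{Cone}(\phi_{\bullet})$ in the standard sign convention $\left(\begin{smallmatrix}-d_{X}&0\\f&d_{Y}\end{smallmatrix}\right)$, the chain-map identity $\phi_{i-1}\partial_{i}=\partial_{i}\phi_{i}$ having been verified in Lemma \ref{lemma: construction of delta_r}. The long exact sequence
\[
\cdots\to H_{n}\bigl((W_{\bullet}\otimes B)(-1)\bigr)\xrightarrow{(\phi_{\bullet})_{\ast}}H_{n}(W_{\bullet}\otimes B)\to H_{n}(\operatorname{Cone}(\phi_{\bullet}))\to H_{n-1}\bigl((W_{\bullet}\otimes B)(-1)\bigr)\to\cdots
\]
then forces $H_{n}(\operatorname{Cone}(\phi_{\bullet}))=0$ for $n\geq 1$ and returns $H_{0}=\operatorname{coker}(\lambda_{z}\text{ on }k[z])=k_{B}$, establishing exactness of (\ref{resolution of k_B}). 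Minimality is immediate since every block entry lands in the submodule generated by $B_{\geq 1}$: $\partial_{i}$ is the action of $V\subseteq B_{1}$, while $\phi_{i}$ is the sum of a term of the form $\sigma^{\otimes i}\otimes\lambda_{z}$ with $z\in B_{1}$ and a term factoring through $m_{B}$ on $V\otimes B$. One may alternatively invoke \cite[Theorem 1]{GS} or \cite[Lemma 2.4]{Phan} for the same conclusion; either way the only point requiring care is matching the sign convention of the block matrix with that of the mapping cone, and beyond this no serious obstacle remains, because Lemma \ref{lemma: construction of delta_r} has already done the conceptual work of producing $\phi_{\bullet}$ as a chain map lifting $\lambda_{z}$.
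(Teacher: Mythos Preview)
Your argument is correct and is essentially a self-contained unpacking of the paper's proof, which simply cites \cite[Theorem~1]{GS} or \cite[Lemma~2.4]{Phan}; the mapping-cone computation you carry out is precisely the content of those references in this setting, and you yourself note them as an alternative. There is no meaningful difference in strategy, only in level of detail.
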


%\begin{proposition}
%$$
%(\id_V^{\otimes i-1} \otimes m_B)\delta_{i,r}=(\id_V^{\otimes i-1} \otimes m_B)\delta
%$$
%\end{proposition}

%$
%\xymatrix{
%0\ar[r] & W_d\otimes B(-1) \ar[r]^{\begin{array}{l}-\partial_d \\ \phi_n\end{array}}
%&W_{d-1}\otimes B(-1)\ar[r]^(0.65){\partial_{d-1}}
%&\cdots\ar[r]^(0.35){\partial_{2}}
%&W_1\otimes B(-1)\ar[r]^(0.55){\partial_{1}}
%&B(-1)\ar[r]&
%k_B
%$

%$$
%\xymatrix{
%0\ar[r] & W_n\otimes B(-1) \ar[r]^{\overline{\partial_n}} &W_{n-1}\otimes B(-1)\ar[r]^{\overline{\partial_{n-1}}}&\cdots\ar[r]^{\overline{\partial_{3}}} &W_2\otimes B(-1)\ar[r]^{\overline{\partial_{2}}} &W_1\otimes B(-1)\ar[r]^{\overline{\partial_{1}}} &B(-1)\ar[r]&0\\
%0\ar[r] & W_n\otimes B \ar[r]^{\overline{\partial_n}} &W_{n-1}\otimes B\ar[r]^{\overline{\partial_{n-1}}}&\cdots\ar[r]^{\overline{\partial_{3}}} &W_2\otimes B\ar[r]^{\overline{\partial_{2}}} &W_1\otimes B\ar[r]^{\overline{\partial_{1}}} &B\ar[r]&0
%,
%}
%$$

\subsection{A sequence pair}

We have constructed a sequence $\{\delta_{i,r}\mid i\geq 2\}$ of linear maps from $\delta$ to obtain a minimal free resolution of $k_B$. It seems a right version of linear maps arose from $\delta$. Symmetrically, we construct a left version of linear maps. Write $\delta_{0,r}=\delta_{0,l}=0$.

\begin{lemma}\label{lemma: construction of delta_l}
Let $\{\delta_{i,r}:W_i\to W_i\otimes V\mid i\geq 1\}$ be a sequence of linear maps as in Lemma \ref{lemma: construction of delta_r}. Then there exists a unique set $\{\delta_{i,l}:W_i\to V\otimes W_i\mid i\geq 1\}$ of linear  maps with respect to $\{\delta_{i,r}\}$ such that
%$$
%-(\id_V^{\otimes i}\otimes m_B)
%\left((\delta_{i,l}-\delta_{i-1,l}\otimes \id_V)\otimes \id_B\right)=(\id_V^{\otimes i }\otimes m_B)\left((\delta_{i,r}-\sigma\otimes\delta_{i-1,r})\otimes\id_B\right)
%$$
%$$
%\delta_{i,l}-\delta_{i-1,l}\otimes \id_V=\delta_{i,r}-\sigma\otimes\delta_{i-1,r}
%$$
\begin{equation}\label{eq: relation between delta_l and delta_r}\qquad\quad
\delta_{i,r}+(-1)^i\delta_{i,l}=\sigma\otimes\delta_{i-1,r}+(-1)^i\delta_{i-1,l}\otimes \id_V,\qquad \forall i\geq 1.
\end{equation}
%Moreover, if $\{\delta'_{i,r}:W_i\to W_i\otimes V\mid i\geq 2\}$ is another set of linear maps as in Lemma \ref{lemma: construction of delta_r} and $\{\delta'_{i,l}:W_i\to V\otimes W_i\mid i\geq 2\}$ is another set satisfying (\ref{eq: relation between delta_l and delta_r}) with respect to $\{\delta'_{i,r}\}_{i=2}^{\infty}$, then $$\Image(\delta_{i,l}-\delta'_{i,l})\subseteq W_{i+1}.$$

\end{lemma}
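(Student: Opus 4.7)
The plan is to construct $\delta_{i,l}$ inductively on $i$, treating (\ref{eq: relation between delta_l and delta_r}) itself as a recursive definition. The cases $i=1,2$ are automatic from the setup: for $i=1$ the relation reads $\delta_{1,r}-\delta_{1,l}=0$, which holds since $\delta_{1,r}=\delta_{1,l}=\delta_{\mid V}$; for $i=2$ it becomes $\delta_{2,r}+\delta_{2,l}=\sigma\otimes\delta+\delta\otimes\id_V=\delta_{\mid R}$, which is exactly the decomposition chosen in (\ref{definition of delta_2}).

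For $i\geq 3$, I would rearrange (\ref{eq: relation between delta_l and delta_r}) as
\[
\delta_{i,l}=\delta_{i-1,l}\otimes\id_V+(-1)^{i}\bigl(\sigma\otimes\delta_{i-1,r}-\delta_{i,r}\bigr),
\]
viewed first as a linear map $W_i\to V^{\otimes i+1}$. Uniqueness of $\delta_{i,l}$ as a map into $V\otimes W_i$ is then immediate, since $V\otimes W_i$ embeds into $V^{\otimes i+1}$, so the real content is to show that the formula actually lands in $V\otimes W_i$.

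The first step is the weaker containment that every summand lies in $V\otimes W_{i-1}\otimes V$: the term $\delta_{i-1,l}\otimes\id_V$ uses the inductive hypothesis on $\delta_{i-1,l}$ together with $W_i\subseteq W_{i-1}\otimes V$; the term $\sigma\otimes\delta_{i-1,r}$ uses $\delta_{i-1,r}(W_{i-1})\subseteq W_{i-1}\otimes V$ and $W_i\subseteq V\otimes W_{i-1}$; and $\delta_{i,r}(W_i)\subseteq W_i\otimes V\subseteq V\otimes W_{i-1}\otimes V$. Combining this with the elementary identity
\[
V\otimes W_i=(V\otimes V\otimes W_{i-1})\cap(V\otimes W_{i-1}\otimes V),
\]
the remaining task is to show $\delta_{i,l}(W_i)\subseteq V^{\otimes i-1}\otimes R$, equivalently that $(\id_V^{\otimes i-1}\otimes m_B)\delta_{i,l}=0$ on $W_i$.

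The main obstacle is this last vanishing. The strategy is to substitute the formula for $\delta_{i,l}$ into $(\id_V^{\otimes i-1}\otimes m_B)\delta_{i,l}$, apply (\ref{condition for delta_r}) to rewrite the $\delta_{i,r}$-contribution as $(\id_V^{\otimes i-1}\otimes m_B)(\sigma^{\otimes i-1}\otimes\delta+\delta_{i-1,r}\otimes\id_V)$, and then combine with the remaining two summands. The Leibniz rule for $\delta$ on $W_i\subseteq V\otimes R$ together with the inductive relation (\ref{eq: relation between delta_l and delta_r}) for $\delta_{i-1,l}$ should then produce the required cancellation. The delicate part is simply the bookkeeping of the sign $(-1)^i$ and of the mismatched positions of the extra $V$-factor across the three summands; once these match up correctly, all terms collapse term by term.
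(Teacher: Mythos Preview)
Your approach is correct and is essentially the paper's own argument, just phrased in terms of subspace intersections rather than exactness of the tensored Koszul complex: the paper forms the map $\theta_i=(\id_V^{\otimes i}\otimes m_B)\bigl(((-1)^{i-1}(\delta_{i,r}-\sigma\otimes\delta_{i-1,r})+\delta_{i-1,l}\otimes\id_V)\otimes\id_B\bigr)$, verifies $(\id_V\otimes\partial_{i-1})\theta_i=0$ by exactly the substitution of (\ref{condition for delta_r}) and the inductive relation that you describe, and then lifts through $\id_V\otimes\partial_i$, which in the relevant degree is precisely your intersection argument. One small slip: the identity you need is $V\otimes W_i=(V\otimes W_{i-1}\otimes V)\cap(V^{\otimes i-1}\otimes R)$, not the one you wrote (for $i\geq 4$ the space $V^{\otimes i-1}\otimes R$ is strictly larger than $V\otimes V\otimes W_{i-1}$), but this corrected identity follows just as directly from the definition of $W_i$ and matches your steps (1) and (2) exactly.
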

\begin{proof}
Since $\delta_{1,r}=\delta_{1,l}$, the result holds if $i=1$.
By the definition (\ref{definition of delta_2}) of $\delta_{2,l}$, it is a linear map from  $W_2$ to $V\otimes W_2$. Clearly,
$$
\delta_{2,r}-\delta_{1,l}\otimes\id_V-\sigma\otimes  \delta_{1,r}=\delta_{2,r}-\delta\otimes\id_V-\sigma\otimes \delta=\delta_{2,r}-\delta=-\delta_{2,l}.
$$

Suppose $i\geq 3$ and there exist linear maps $\delta_{j,l}:W_j\to V\otimes W_j$ for  $j< i$, such that
$$
\delta_{j,r}+(-1)^j\delta_{j,l}=\sigma\otimes\delta_{j-1,r}+(-1)^j\delta_{j-1,l}\otimes \id_V.
$$
Then we have the following commutative diagram
$$
\xymatrix{
&W_i\otimes B(-1)  \ar@{-->}[ld]_{\overline{\theta}_i}\ar[d]^{\theta_i}&\\
V\otimes W_i\otimes B\ar[r]^(0.47){\id_V\otimes \partial_i} &V\otimes W_{i-1}\otimes B\ar[r]^{\id_V\otimes \partial_{i-1}} &V\otimes W_{i-2}\otimes B,
}
$$
where
$
\theta_i=(\id_V^{\otimes i }\otimes m_B)\left(\left((-1)^{i-1}\left(\delta_{i,r}-\sigma\otimes\delta_{i-1,r}\right)+\delta_{i-1,l}\otimes \id_V\right)\otimes\id_B\right).
$
In fact,
{\small
\begin{align*}
(-1)^{i-1}(\id_V \otimes \partial_{i-1})\theta_i&=(\id_V^{\otimes i-1} \otimes m_B)(\id_V^{\otimes i }\otimes m_B)\left((\delta_{i,r}-\sigma\otimes\delta_{i-1,r}+(-1)^{i-1}\delta_{i-1,l}\otimes \id_V)\otimes\id_B\right)\\
&=(\id_V^{\otimes i-1} \otimes m_B)(\id_V^{\otimes i }\otimes m_B)\left((\sigma^{\otimes i-1}\otimes \delta+\delta_{i-1,r}\otimes \id_V-\sigma\otimes\delta_{i-1,r}+(-1)^{i-1}\delta_{i-1,l}\otimes \id_V)\otimes\id_B\right)\\
&=(\id_V^{\otimes i-1} \otimes m_B)(\id_V^{\otimes i }\otimes m_B)\left(
(
\sigma^{\otimes i-1}\otimes \delta+(\delta_{i-1,r}+(-1)^{i-1}\delta_{i-1,l})\otimes\id_V-\sigma\otimes\delta_{i-1,r})
\otimes \id_B
\right)\\
&=
(\id_V^{\otimes i-1} \otimes m_B)(\id_V^{\otimes i }\otimes m_B)\left(
(
\sigma^{\otimes i-1}\otimes \delta+\sigma\otimes\delta_{i-2,r}\otimes\id_V-\sigma\otimes\delta_{i-1,r})
\otimes \id_B
\right)\\
&\quad+(-1)^{i-1}(\id_V^{\otimes i-1} \otimes m_B)(\id_V^{\otimes i }\otimes m_B)\left((\delta_{i-2,l}\otimes\id_V\otimes\id_V)\otimes\id_B\right)\\
&=
(\id_V^{\otimes i-1} \otimes m_B)(\id_V^{\otimes i }\otimes m_B)\left(
\left(
\sigma^{\otimes i-1}\otimes \delta+\sigma\otimes(\delta_{i-2,r}\otimes\id_V-\delta_{i-1,r})\right)
\otimes \id_B
\right)\\
&=
(\id_V^{\otimes i-1} \otimes m_B)(\id_V^{\otimes i }\otimes m_B)\left(
\left(
\sigma^{\otimes i-1}\otimes \delta-\sigma^{\otimes i-1}\otimes \delta)\right)
\otimes \id_B
\right)\\
&=0,
\end{align*}}
the second and sixth equalities hold by Lemma \ref{lemma: construction of delta_r}, the fourth equality holds by the assumption and the fifth equality holds by $W_i\subseteq V^{\otimes i-2}\otimes R$. So $\Image \theta_i\subseteq\Ker (\id_V\otimes\partial_{i-1})=\Image (\id_V\otimes\partial_{i})$, and there exists a graded $B$-module homomorphism $\overline{\theta}_{i}: W_i\otimes B(-1)\to V\otimes W_i\otimes B$ such that $(\id_V\otimes \partial_i)\overline{\theta}_i=\theta_i$, since $W_i\otimes B$ is free.

It is easy to know that ${\overline{\theta}_{i}}_{\mid W_i}$ is a map from $W_i$ to $V\otimes W_i$ by an argument on degree, denoted this map by $\delta_{i,l}$.
%Let $\delta_{i,l}$ be the restriction map of $\overline{\theta}_i$ on $W_i$.
Hence,
$$
\delta_{i,r}+(-1)^i\delta_{i,l}=\sigma\otimes\delta_{i-1,r}+(-1)^i\delta_{i-1,l}\otimes \id_V.
$$
%$${\small
%\xymatrix{
%\cdots\ar[r] & W_d\otimes B \ar[r]^(0.6){\partial_d}\ar[d]^{\theta_{d-1}} &W_{d-1}\otimes B\ar[r]^{\partial_{d-1}}\ar[d]^{\theta_{d-2}} &\cdots\ar[r]^(0.3){\partial_{3}}&W_2\otimes B\ar[r]^(0.47){\partial_{2}}\ar[d]^{\theta_1} &W_1\otimes B\ar[d]^{\theta_0}\ar[rd]^{x_i^*\otimes \varepsilon_B}
%\\
%\cdots\ar[r] &W_{d-1}\otimes B(-1)\ar[r]^(0.47){{-\partial_{d-1}}}&W_{d-2}\otimes B(-1)\ar[r]^(0.6){-\partial_{d-2}} &\cdots\ar[r]^(0.37){-{\partial_{2}}} &W_1\otimes B(-1)\ar[r]^(0.55){{-\partial_{1}}}
%&B(-1)\ar[r]^{\varepsilon_B}&
%k_B(-1),
%}}
%$$
The uniqueness can be obtained by the construction easily.
%
%We show the last consequence by induction. Obviously, $\delta_{\mid W^{\otimes 2}}=\delta_{2,r}-\delta_{2,l}=\delta'_{2,r}-\delta'_{2,l},$ so
%$$
%\delta_{2,r}-\delta'_{2,r}=\delta_{2,l}-\delta'_{2,l},
%$$
%and $\Image (\delta_{2,l}-\delta'_{2,l})\subseteq (W_2\otimes V)\cap (V\otimes W_2)=W_3$.
%
%Suppose $\Image(\delta_{j,l}-\delta'_{j,l})\subseteq W_{j+1}$ for any $j<i$, where $i\geq 3$.
%\begin{align*}
%(\id_V\otimes m_B)(\delta_{i,l}-\delta'_{i,l})
%\theta_i&=(\id_V^{\otimes i-1} \otimes m_B)(\id_V^{\otimes i }\otimes m_B)\left((\delta_{i,r}-\sigma\otimes\delta_{i-1,r}-\delta_{i-1,l}\otimes \id_V)\otimes\id_B\right)\\
%&=(\id_V^{\otimes i-1} \otimes m_B)(\id_V^{\otimes i }\otimes m_B)\left((\sigma^{\otimes i-1}\otimes \delta+\delta_{i-1,r}\otimes \id_V-\sigma\otimes\delta_{i-1,r}-\delta_{i-1,l}\otimes \id_V)\otimes\id_B\right)\\
%\end{align*}
\end{proof}

\begin{definition} Let $\{\delta_{i,r}:W_i\to W_i\otimes V\mid i\geq 1\}$ be a sequence of linear maps constructed in Lemma \ref{lemma: construction of delta_r}, and  $\{\delta_{i,l}:W_i\to V\otimes W_i \mid i\geq 1\}$ a sequence of linear maps constructed in Lemma \ref{lemma: construction of delta_l}. Then
$\left(\{\delta_{i,r}\},\{\delta_{i,l}\}\right)$ is called a \emph{sequence pair} for the $\overline{\sigma}$-derivation $\overline{\delta}$.
\end{definition}

We give a relation between $\{\delta_{i,r}\}$ and $\{\delta_{i,l}\}$, which will be useful in the construction of twisted superpotentials for graded Ore extensions.

\begin{proposition}\label{prop: relation between delta_r and delta_l}
Let $\left(\{\delta_{i,r}\},\{\delta_{i,l}\}\right)$ be a sequence pair for $\overline{\delta}$. Then
\begin{enumerate}
\item For any integer $d\geq 1$,
$$
\sum_{i=1}^d (-1)^{i}(\delta_{i,r}\otimes\id_V^{\otimes d-i})=(-1)^{d+1}\sum_{i=1}^{d}(-1)^{i}(\sigma^{\otimes d-i}\otimes\delta_{i,l}).
$$
\item For any integer $d\geq 2$, $$
\sum_{i=1}^{d-1}(-1)^{i+1}(\sigma^{\otimes d-i-1}\otimes \delta_{i,l}\otimes\id_V)=(-1)^{d+1}\sum_{i=1}^{d-1}(-1)^i(\delta_{i,r}\otimes\id_V^{\otimes d-i}).
$$
\end{enumerate}

\end{proposition}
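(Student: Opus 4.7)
The plan is to prove part (1) by induction on $d$ and then to derive part (2) directly from part (1) applied at index $d-1$. The driving input is the recurrence
\[
\delta_{i,r}+(-1)^i\delta_{i,l}=\sigma\otimes\delta_{i-1,r}+(-1)^i\delta_{i-1,l}\otimes\id_V
\]
from Lemma \ref{lemma: construction of delta_l}, together with the initial conventions $\delta_{0,r}=\delta_{0,l}=0$.

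For part (1), abbreviate the two sides as $\Phi_d=\sum_{i=1}^d(-1)^i\delta_{i,r}\otimes\id_V^{\otimes d-i}$ and $\Psi_d=\sum_{i=1}^d(-1)^i\sigma^{\otimes d-i}\otimes\delta_{i,l}$, so the claim becomes $\Phi_d=(-1)^{d+1}\Psi_d$. The base case $d=1$ is immediate from $\delta_{1,r}=\delta_{1,l}$. For the inductive step I would tensor the recurrence on the right by $\id_V^{\otimes d-i}$, multiply by $(-1)^i$, and sum over $i=1,\dots,d$. The terms $\delta_{i,l}\otimes\id_V^{\otimes d-i}$ and $\delta_{i-1,l}\otimes\id_V^{\otimes d-i+1}$ telescope (using $\delta_{0,l}=0$), leaving only $-\delta_{d,l}$; the block $\sigma\otimes\delta_{i-1,r}\otimes\id_V^{\otimes d-i}$ reassembles, after an index shift and using $\delta_{0,r}=0$, into $-\sigma\otimes\Phi_{d-1}$. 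The net outcome is the tidy one-step recurrence $\Phi_d=-\sigma\otimes\Phi_{d-1}-\delta_{d,l}$. A direct split of the $i=d$ summand in $\Psi_d$ likewise gives $\Psi_d=(-1)^d\delta_{d,l}+\sigma\otimes\Psi_{d-1}$. Feeding the induction hypothesis $\Phi_{d-1}=(-1)^d\Psi_{d-1}$ into the recurrence for $\Phi_d$ and using the sign identity $(-1)^{d+1}(-1)^d=-1$ matches both sides and closes the induction.

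For part (2), the left-hand side equals exactly $-\Psi_{d-1}\otimes\id_V$. Part (1) at index $d-1$ gives $\Psi_{d-1}=(-1)^d\Phi_{d-1}$, so this rewrites as $(-1)^{d+1}\Phi_{d-1}\otimes\id_V$; re-absorbing the trailing $\id_V$ into the tensor exponent $\id_V^{\otimes (d-1)-i}\otimes\id_V=\id_V^{\otimes d-i}$ yields precisely the right-hand side of (2). The only genuine obstacle anywhere in the argument is sign and index bookkeeping: tracking parities through the shift $i\mapsto i-1$ and confirming that both telescoping tails truly vanish thanks to $\delta_{0,r}=\delta_{0,l}=0$. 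Once the recurrences for $\Phi_d$ and $\Psi_d$ have been isolated, the remainder is mechanical.
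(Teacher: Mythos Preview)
Your argument is correct. For part (a), the paper takes a somewhat different route: rather than setting up the two recurrences $\Phi_d=-\sigma\otimes\Phi_{d-1}-\delta_{d,l}$ and $\Psi_d=(-1)^d\delta_{d,l}+\sigma\otimes\Psi_{d-1}$ and inducting, the authors compute, for each fixed $j$, the partial sum $\sum_{i=j+1}^d(-1)^i(\sigma^{\otimes j}\otimes\delta_{i-j,r}\otimes\id_V^{\otimes d-i})$, show using Lemma~\ref{lemma: construction of delta_l} that it equals the corresponding sum with $j$ replaced by $j+1$ minus a single term $(-1)^j\sigma^{\otimes j}\otimes\delta_{d-j,l}$, and then sum these identities over $j=0,\dots,d-2$ so that the double sums telescope. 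Your induction packages the same information more cleanly: once the one-step recurrences for $\Phi_d$ and $\Psi_d$ are written down, the identity is immediate, and you avoid the double-summation bookkeeping. For part (b), the paper again manipulates the recurrence of Lemma~\ref{lemma: construction of delta_l} directly and appeals to part (a) at the end; your observation that the left-hand side is literally $-\Psi_{d-1}\otimes\id_V$ shortcuts that computation. Both approaches rest on the same lemma and the same telescoping mechanism; yours just isolates the inductive structure first.
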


\begin{proof} (a) If $d=1$, the equality is clear. Assume $d\geq 2$. By Lemma \ref{lemma: construction of delta_l},
\begin{align*}
&\sum_{i=j+1}^d (-1)^{i}(\sigma^{\otimes j}\otimes\delta_{i-j,r}\otimes\id_V^{\otimes d-i})\\
=&\sum_{i=j+1}^d (-1)^{i}(\sigma^{\otimes j}\otimes\delta_{i-j,r}\otimes\id_V^{\otimes d-i})+(-1)^j\sum_{i=j+1}^d \sigma^{\otimes j}\otimes\delta_{i-j,l}\otimes\id_V^{\otimes d-i}-(-1)^j\sum_{i=j+1}^d \sigma^{\otimes j}\otimes\delta_{i-j,l}\otimes\id_V^{\otimes d-i}\\
=&\sum_{i=j+1}^d (-1)^{i}\left(\sigma^{\otimes j}\otimes(\delta_{i-j,r}+(-1)^{i-j}\delta_{i-j,l})\otimes\id_V^{\otimes d-i}\right)-(-1)^j\sum_{i=j+1}^d \sigma^{\otimes j}\otimes\delta_{i-j,l}\otimes\id_V^{\otimes d-i}\\
=&\sum_{i=j+2}^d (-1)^{i}(\sigma^{\otimes j+1}\otimes\delta_{i-j-1,r}\otimes\id_V^{\otimes d-i})+(-1)^j\sum_{i=j+2}^d \sigma^{\otimes j}\otimes\delta_{i-j-1,l}\otimes\id_V^{\otimes d-i+1}-(-1)^j\sum_{i=j+1}^d \sigma^{\otimes j}\otimes\delta_{i-j,l}\otimes\id_V^{\otimes d-i}\\
=&\sum_{i=j+2}^d (-1)^{i}(\sigma^{\otimes j+1}\otimes\delta_{i-j-1,r}\otimes\id_V^{\otimes d-i})-(-1)^j\sigma^{\otimes j}\otimes\delta_{d-j,l},
\end{align*}
for any $j=0,1,\cdots,d-2$. Then
\begin{align*}
&\sum_{j=0}^{d-2}\sum_{i=j+1}^d (-1)^{i}(\sigma^{\otimes j}\otimes\delta_{i-j,r}\otimes\id_V^{\otimes d-i})=\sum_{j=0}^{d-2}\sum_{i=j+2}^d (-1)^{i}(\sigma^{\otimes j+1}\otimes\delta_{i-j-1,r}\otimes\id_V^{\otimes d-i})-\sum_{j=0}^{d-2}(-1)^j\sigma^{\otimes j}\otimes\delta_{d-j,l},\\
&\sum_{i=1}^d (-1)^{i}(\delta_{i,r}\otimes\id_V^{\otimes d-i})=(-1)^{d}(\sigma^{\otimes d-1}\otimes\delta_{1,r})-\sum_{j=0}^{d-2}(-1)^j\sigma^{\otimes j}\otimes\delta_{d-j,l}=-\sum_{j=0}^{d-1}(-1)^j\sigma^{\otimes j}\otimes\delta_{d-j,l}.
\end{align*}
The result follows.

(b) By Lemma \ref{lemma: construction of delta_l},
$$
\sum_{i=1}^{d-1}(-1)^{i+1}\sigma^{\otimes d-i-1}\otimes \delta_{i,l}\otimes\id_V+\sum_{i=1}^{d-1}\sigma^{\otimes d-i}\otimes\delta_{i,r}
=\sum_{i=1}^{d-1}\sigma^{{\otimes d-i-1}}\otimes \delta_{i+1,r}+\sum_{i=1}^{d-1}(-1)^{i+1}\sigma^{\otimes d-i-1}\otimes\delta_{i+1,l}.
$$
Then
\begin{align*}
\sum_{i=1}^{d-1}(-1)^{i+1}\sigma^{\otimes d-i-1}\otimes \delta_{i,l}\otimes\id_V&=\delta_{d,r}-\sigma^{\otimes d-1}
\otimes \delta_{1,r}+\sum_{i=2}^{d}(-1)^{i}\sigma^{\otimes d-i}\otimes\delta_{i,l}\\
&=\delta_{d,r}-\sigma^{\otimes d-1}
\otimes \delta_{1,r}+\sigma^{\otimes d-1}\otimes\delta_{1,l}-(-1)^{d}\sum_{i=1}^d(-1)^i(\delta_{i,r}\otimes\id_V^{\otimes d-i})\\
&=(-1)^{d+1}\sum_{i=1}^{d-1}(-1)^i(\delta_{i,r}\otimes\id_V^{\otimes d-i}),
\end{align*}
where the second equality holds by (a).
\end{proof}

As shown above, a sequence pair $\left(\{\delta_{i,r}\},\{\delta_{i,l}\}\right)$ is constructed from a linear map $\delta: V\to V\otimes V$ which induces the $\overline{\sigma}$-derivation $\overline{\delta}$ in the graded Ore extensions $B$. It is also shown that such sequence pairs vary according to the decomposition (\ref{decomposition of delta}) and the choices of $\{\delta_{i,r}\}$ in Lemma \ref{lemma: construction of delta_r}. On the other hand, the $\overline{\sigma}$-derivation $\overline{\delta}$ can be  induced from different linear maps $\delta,\delta':V\to V\otimes V$, which also arise different sequence pairs for $\overline{\delta}$. Here, we give a relation between different sequence pairs.

\begin{proposition}\label{prop: relation betwenn different delta_r} Let $\delta,\delta'$ be two linear maps from $V$ to $V\otimes V$ such that $\pi_A\delta=\pi_A\delta'=\overline{\delta}_{\mid V}$, and $\left(\{\delta_{i,r}\},\{\delta_{i,l}\}\right)$ and $\left(\{\delta'_{i,r}\},\{\delta'_{i,l}\}\right)$ two sequence pairs for $\overline{\delta}$ constructed from $\delta$ and $\delta'$ respectively. Then for any $i\geq1$,
\begin{enumerate}
\item $
\Image\left(\sum_{j=0}^{i-1}(-1)^j\left((\delta_{i-j,r}-\delta'_{i-j,r})\otimes\id_V^{\otimes j}\right)\right)\subseteq W_{i+1}.
$
\item $
\Image\left((\delta_{i,l}-\delta'_{i,l})+(-1)^{i}\sum_{j=1}^{i-1}(-1)^j\left(\sigma\otimes(\delta_{i-j,r}-\delta'_{i-j,r})\otimes\id_V^{\otimes j-1}\right)\right)\subseteq W_{i+1}.
$
\end{enumerate}
%where $\delta=\delta_{1,r}=\delta_{1,l},\delta'=\delta'_{1,r}=\delta'_{1,l}.$
Moreover, if $W_{d+1}=0$ for some $d\geq1$, then
\begin{align*}
\sum_{j=0}^{d-1}(-1)^{j}(\delta_{d-j,r}\otimes\id_V^{\otimes j})&=\sum_{j=0}^{d-1}(-1)^{j}(\delta'_{d-j,r}\otimes\id_V^{\otimes j}),\\
\sum_{j=0}^{d-1}(-1)^{j}(\sigma^{\otimes j}\otimes \delta_{d-j,l})&=\sum_{j=0}^{d-1}(-1)^{j}(\sigma^{\otimes j}\otimes \delta'_{d-j,l}),\\
%\sum_{i=1}^d (-1)^{d-i}(\sigma^{\otimes d-i}\otimes \delta_{i,l})(\omega)&=\sum_{i=1}^d (-1)^{d-i}(\sigma^{\otimes d-i}\otimes \delta'_{i,l})(\omega),\\
(-1)^{d}\delta_{d,l}+\sum_{j=1}^{d-1}(-1)^{j}\left(\sigma\otimes \delta_{d-j,r}\otimes\id_V^{\otimes j-1}\right)&=(-1)^{d}\delta'_{d,l}+\sum_{j=1}^{d-1}(-1)^{j}\left(\sigma\otimes \delta'_{d-j,r}\otimes\id_V^{\otimes j-1}\right).
\end{align*}
%
%
%
%
%\begin{align*}
%\delta_{d,r}-\delta'_{d,r}&=\sum_{j=1}^{d-1}(-1)^{j+1}\left((\delta_{d-j,r}-\delta'_{d-j,r})\otimes\id_V^{\otimes j}\right),\\
%\delta_{d,l}-\delta'_{d,l}&=(-1)^{d-1}\sum_{j=1}^{d-1}(-1)^{j}\left(\sigma\otimes (\delta_{d-j,r}-\delta'_{d-j,r})\otimes\id_V^{\otimes j-1}\right).
%\end{align*}

%If $W_{d+1}=0$, then $\sum_{j=0}^{i-2}(-1)^j\left(\delta_{d-j,r}\otimes\id_V^{\otimes j}\right)=\sum_{j=0}^{i-2}(-1)^j\left(\delta'_{d-j,r}\otimes\id_V^{\otimes j}\right)$
\end{proposition}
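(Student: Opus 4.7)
Write $\Delta_{i,r}:=\delta_{i,r}-\delta'_{i,r}$, $\Delta_{i,l}:=\delta_{i,l}-\delta'_{i,l}$, and
\[
\Delta_i:=\sum_{j=0}^{i-1}(-1)^j(\Delta_{i-j,r}\otimes\id_V^{\otimes j}),
\]
so that $\Delta_i=\Delta_{i,r}-\Delta_{i-1}\otimes\id_V$ and $\Delta_1=\delta-\delta'$ takes values in $R=W_2$ (since $\pi_A\delta=\pi_A\delta'$). The plan is to prove (a) by a chain-homotopy argument between the two lifts coming from Lemma~\ref{lemma: construction of delta_r}, to deduce (b) by reducing its expression to $(-1)^{i-1}\Delta_i$ via the recursion of Lemma~\ref{lemma: construction of delta_l}, and to read off the ``moreover'' part by specializing to $W_{d+1}=0$.

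For part (a), I would invoke the Comparison Theorem. The chain maps $\phi_\bullet$ and $\phi'_\bullet$ constructed in Lemma~\ref{lemma: construction of delta_r} from $\{\delta_{i,r}\}$ and $\{\delta'_{i,r}\}$ are both lifts of $\lambda_z\circ(\varepsilon_A\otimes_A B)$, hence chain-homotopic through degree-zero graded $B$-module maps $h_i\colon W_i\otimes B(-1)\to W_{i+1}\otimes B$ with
\[
\phi_i-\phi'_i=\partial_{i+1}h_i+h_{i-1}\partial_i.
\]
A degree count shows $h_i$ is determined by $h_i|_{W_i}\colon W_i\to W_{i+1}$, and the equality $\phi_0-\phi'_0=0$ (with $h_{-1}=0$) forces $h_0=0$. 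Restricting the homotopy equation to the generator space $W_i$ and using that $\partial_{i+1}$ acts on $W_{i+1}$ as the natural inclusion $W_{i+1}\hookrightarrow W_i\otimes V$, the equation collapses to $\Delta_{i,r}=h_i+h_{i-1}\otimes\id_V$, which is precisely the recursion defining $\Delta_i$. An induction on $i$ starting from $h_0=0$ then gives $h_i|_{W_i}=\Delta_i$, and since the left-hand side lies in $W_{i+1}$ by construction, so does $\Delta_i$.

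For part (b), denote the map in the assertion by
\[
E_i:=\Delta_{i,l}+(-1)^i\sum_{j=1}^{i-1}(-1)^j(\sigma\otimes\Delta_{i-j,r}\otimes\id_V^{\otimes j-1}).
\]
Subtracting the identity of Lemma~\ref{lemma: construction of delta_l} for $\delta'$ from that for $\delta$ yields $\Delta_{i,l}=(-1)^i(\sigma\otimes\Delta_{i-1,r}-\Delta_{i,r})+\Delta_{i-1,l}\otimes\id_V$. Plugging this into $E_i$, the $j=1$ term of the sum cancels $(-1)^i\sigma\otimes\Delta_{i-1,r}$, and after a single shift of summation index the surviving terms reassemble into $E_{i-1}\otimes\id_V$. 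This produces the recursion $E_i=(-1)^{i-1}\Delta_{i,r}+E_{i-1}\otimes\id_V$ with $E_0=0$, which unfolds to $E_i=(-1)^{i-1}\Delta_i$. Part (a) then delivers $E_i(W_i)\subseteq W_{i+1}$, which is (b).

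Finally, the ``moreover'' part drops out upon setting $W_{d+1}=0$: (a) at $i=d$ gives the first equality and (b) at $i=d$ gives the third. For the middle equality, I would apply Proposition~\ref{prop: relation between delta_r and delta_l}(a) at index $d$ to each sequence pair and subtract; its left-hand side then equals $(-1)^d\Delta_d$, which vanishes on $W_d$ by (a), forcing its right-hand side to vanish too, which is precisely the middle equality. The main obstacle I anticipate is the degree bookkeeping in the chain-homotopy step of (a): one must check carefully that the degree-zero shift really confines $h_i|_{W_i}$ to $W_{i+1}\subseteq V^{\otimes i+1}$ rather than the ambient tensor power, and that $h_0=0$ is genuinely forced at the augmentation level. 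Once those technicalities are in place, the rest is a mechanical unravelling of recursions.
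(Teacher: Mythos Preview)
Your proposal is correct and follows essentially the same approach as the paper: the chain-homotopy argument for (a) with the recursion $s_i|_{W_i}=\Delta_{i,r}-s_{i-1}|_{W_{i-1}}\otimes\id_V$ is exactly the paper's computation, and your derivation of (b) via the recursion $E_i=(-1)^{i-1}\Delta_{i,r}+E_{i-1}\otimes\id_V$ is just an inductive repackaging of the paper's telescoping sum, both yielding $E_i=(-1)^{i-1}\Delta_i$. The ``moreover'' part is handled identically in both, invoking Proposition~\ref{prop: relation between delta_r and delta_l}(a) for the middle equality.
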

\begin{proof}
(a)
Let $\{\phi_{i}\}$ and $\{\phi'_{i}\}$ be two lifts of $\lambda_z:B/A_{\geq 1}B(-1)\to B/A_{\geq1}B$ as in Lemma \ref{lemma: construction of delta_r} associated with $\{\delta_{i,r}\}$ and $\{\delta'_{i,r}\}$ respectively. Clearly, $\phi_0=\phi'_0$, %$\phi_1=\phi'_1$
and $\phi_i-\phi'_i= (\id_V^{\otimes i}\otimes m_B)\left((\delta_{i,r}-\delta'_{i,r})\otimes\id_B\right)$ for any $i\geq 1$. There exist graded $B$-module homomorphisms $s_i:W_i\otimes B(-1)\to W_{i+1}\otimes B$ for $i\geq 1$ such that the following diagram is commutative
$${\Small
\xymatrix{
\cdots\ar[r] & W_i\otimes B(-1) \ar[r]^{\partial_d}\ar[ld]_{s_i}\ar[d]^{\phi_i-\phi'_i} &W_{i-1}\otimes B(-1)\ar[r]^(0.65){\partial_{d-1}}\ar[ld]_{s_{i-1}}\ar[d]^{\phi_{i-1}-\phi'_{i-1}}&\cdots\ar[r]^(0.35){\partial_{3}}&W_2\otimes B(-1)\ar[ld]_{s_2}\ar[r]^(0.55){\partial_2}\ar[d]^{\phi_2-\phi'_2} &W_1\otimes B(-1)\ar[ld]_{s_1}\ar[r]^(0.55){\partial_{1}}\ar[d]^{\phi_1-\phi'_1} &B(-1)\ar[r]\ar[d]^{0}\ar[ld]_{0}&0
\\
\cdots\ar[r] & W_i\otimes B \ar[r]^{\partial_i} &W_{i-1}\otimes B\ar[r]^(0.6){{\partial_{d-1}}}&\cdots\ar[r]^(0.41){{\partial_{3}}}&W_2\otimes B\ar[r]^(0.55){\partial_{1}} &W_1\otimes B\ar[r]^(0.6){{\partial_{1}}} &B\ar[r]&0.
}
}
$$
that is, $\phi_i-\phi'_i=s_{i-1}\partial_i+\partial_{i+1}s_{i},$ for any $i\geq 1$, where $s_0=0$. By an easy argument on degree, one obtains that for any $i\geq 1$, $\Image {s_i}_{\mid W_i}\subseteq W_{i+1}$ and
$${s_i}_{\mid W_i}=\delta_{i,r}-\delta'_{i,r}-{s_{i-1}}_{\mid W_{i-1}}\otimes\id_V.$$

Since ${s_{1}}_{\mid W_1}=\delta_{1,r}-\delta'_{1,r}$, we have
$$
{s_i}_{\mid W_i}=\sum_{j=0}^{i-1}(-1)^j\left((\delta_{i-j,r}-\delta'_{i-j,r})\otimes\id_V^{\otimes j}\right).
$$
Then the result follows.

%Suppose

(b)  By Lemma \ref{lemma: construction of delta_l}, we have
\begin{align*}
\delta_{j,r}+(-1)^j\delta_{j,l}=\sigma\otimes\delta_{j-1,r}+(-1)^j\delta_{j-1,l}\otimes \id_V,\qquad
\delta'_{j,r}+(-1)^j\delta'_{j,l}=\sigma\otimes\delta'_{j-1,r}+(-1)^j\delta'_{j-1,l}\otimes \id_V.
\end{align*}
for any $j\geq 1$. So
$$
(\delta_{j,r}-\delta'_{j,r})\otimes\id_V^{\otimes i-j}
-\sigma\otimes(\delta_{j-1,r}-\delta'_{j-1,r})\otimes\id_V^{\otimes i-j}
=(-1)^j(\delta_{j-1,l}-\delta'_{j-1,l})\otimes \id_V^{\otimes i-j+1}-(-1)^j(\delta_{j,l}-\delta'_{j,l})\otimes\id_V^{\otimes i-j},
$$
for any $1\leq j\leq i$, where $\delta_{0,r}=\delta_{0,l}=\delta'_{0,r}=\delta'_{0,l}=0$. Then
\begin{align*}
&\sum_{j=1}^i(-1)^{i-j}(\delta_{j,r}-\delta'_{j,r})\otimes\id_V^{\otimes i-j}
-\sum_{j=1}^i(-1)^{i-j}\sigma\otimes(\delta_{j-1,r}-\delta'_{j-1,r})\otimes\id_V^{\otimes i-j}\\
=&\sum_{j=1}^i(-1)^i(\delta_{j-1,l}-\delta'_{j-1,l})\otimes \id_V^{\otimes i-j+1}-\sum_{j=1}^i(-1)^i(\delta_{j,l}-\delta'_{j,l})\otimes\id_V^{\otimes i-j}\\
=&(-1)^{i+1}(\delta_{i,l}-\delta'_{i,l}).
\end{align*}
Equivalently,
$$
\sum_{j=0}^{i-1}(-1)^{j}(\delta_{i-j,r}-\delta'_{i-j,r})\otimes \id_V^{\otimes j}=(-1)^{i+1}(\delta_{i,l}-\delta'_{i,l})-\sum_{j=1}^{i-1}(-1)^{j}\sigma\otimes(\delta_{i-j,r}-\delta'_{i-j,r})\otimes \id_V^{\otimes j-1}.
$$
Then the results holds by (a).

The last consequence is an immediate consequence of (a, b) and Proposition \ref{prop: relation between delta_r and delta_l}(a).
\end{proof}
\section{Nakayama automorphisms of graded Ore extensions of Koszul AS-regular algebras}
Keep the notations in the last section. In this section, we always assume $A=T(V)/(R)$ is a Koszul AS-regular algebra of dimension $d$, where $V$ is a vector space with a basis $\{x_1,x_2,\cdots,x_n\}$. Write $\mu_A$ for the Nakayama automorphism of $A$. In this case, by \cite[Proposition 3.1.4]{SZ}, one obtains
$$
\dim W_i=
\left\{
\begin{array}{ll}
\dim W_{d-i} & \text{if } 0\leq i\leq d.\\
0            & \text{if } i>d.
\end{array}
\right.
$$
Write $\{\eta_1,\eta_2,\cdots,\eta_n\}$ for a basis of $W_{d-1}$ and  $\omega$ for a basis of $W_d$.

It is well known that the graded Ore extension $B=A[z;\overline{\sigma},\overline{\delta}]$ is a Koszul AS-regular algebra of dimension $d+1$ provided $\overline{\sigma}$ is a graded automorphism of $A$. Write $\sigma=\overline{\sigma}_{\mid V}$. The minimal free resolution (\ref{resolution of k_B}) of $k_B$ becomes
\begin{equation}\label{resolution of koszul regular}
\begin{aligned}
0\xlongrightarrow{}
W_{d}\otimes& B(-1)
\xlongrightarrow{\left({\scriptscriptstyle\begin{array}{ll}{\scriptstyle -\partial_{d} }\\ {\scriptstyle\phi_{d} }\end{array}}\right)}
W_{d-1}\otimes B(-1)\oplus W_d\otimes B\xlongrightarrow{}\cdots\\
&\xlongrightarrow{}
W_1\otimes B(-1)\oplus W_{2}\otimes B \xlongrightarrow{\left({\scriptscriptstyle\begin{array}{ll}{\scriptstyle -\partial_{1} }& {\scriptstyle0} \\ {\scriptstyle\phi_1 }&{\scriptstyle \partial_{2}}\end{array}}\right)}
B(-1)\oplus W_1\otimes B\xlongrightarrow{\left(\begin{array}{ll}{\scriptstyle \phi_0}& {\scriptstyle \partial_1}\end{array}\right)}
B\xlongrightarrow{\varepsilon_B}
k_B\xlongrightarrow{} 0,
\end{aligned}
\end{equation}
for some sequence $\{\delta_{i,r}\}$ of linear maps as in Lemma \ref{lemma: construction of delta_r}.

\subsection{An invariant}

Let $\left(\{\delta_{i,r}\},\{\delta_{i,l}\}\right)$ be a sequence pair for $\overline{\delta}$. Since $\dim W_d=1$, there exists a unique pair $(\delta_r,\delta_l)$ of elements in $V$ with respect to $\left(\{\delta_{i,r}\},\{\delta_{i,l}\}\right)$  such that
$$\delta_{d,r}(\omega)=\omega\otimes \delta_r,\quad \delta_{d,l}(\omega)=\delta_l\otimes \omega.$$
%So there is a pair $(\delta_r,\delta_l)$ of elements in $V$ with respect to a sequence pair $\left(\{\delta_{i,r}\},\{\delta_{i,l}\}\right)$.

\begin{corollary}\label{coro: relation between delta_r and delta l}
Let $(\delta_r,\delta_l)$ and $(\delta'_r,\delta'_l)$ be two pairs with respect to two sequence pairs $\left(\{\delta_{i,r}\},\{\delta_{i,l}\}\right)$ and $\left(\{\delta'_{i,r}\},\{\delta'_{i,l}\}\right)$ for $\overline{\delta}$, respectively. Then
$$
\delta_r+{\mu_A}\sigma^{-1}(\delta_l)=\delta'_r+\mu_A\sigma^{-1}(\delta'_l).
$$
%where $\mu_A$ represents the linear map ${\mu_A}_{\mid V}$.
\end{corollary}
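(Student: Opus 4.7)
The plan is to extract the invariant by combining the two equalities of Proposition \ref{prop: relation betwenn different delta_r}(a,b) in the critical case $i=d$ (where the residual image lands in $W_{d+1}=0$) with the $\mu_A|_V$-twisted superpotential property of $\omega$ supplied by Theorem \ref{thm: properties of Koszul regular algebras}(c). Set $\gamma_k:=\delta_{k,r}-\delta'_{k,r}\colon W_k\to W_k\otimes V$. Evaluating Proposition \ref{prop: relation betwenn different delta_r}(a) on $\omega\in W_d$ and separating the $j=0$ term gives
\[\omega\otimes(\delta_r-\delta'_r)=-T,\qquad T:=\sum_{j=1}^{d-1}(-1)^j(\gamma_{d-j}\otimes\id_V^{\otimes j})(\omega),\]
and likewise Proposition \ref{prop: relation betwenn different delta_r}(b) gives $(\delta_l-\delta'_l)\otimes\omega=(-1)^{d+1}S$ with $S:=\sum_{j=1}^{d-1}(-1)^j(\sigma\otimes\gamma_{d-j}\otimes\id_V^{\otimes j-1})(\omega)$. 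I would then factor out $\sigma$ by introducing the ``untwisted'' partner
\[\widetilde S:=\sum_{j=1}^{d-1}(-1)^j(\id_V\otimes\gamma_{d-j}\otimes\id_V^{\otimes j-1})(\omega),\qquad S=(\sigma\otimes\id_V^{\otimes d})(\widetilde S).\]

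The heart of the argument is the identity $T=(-1)^{d-1}\rho_{d+1}(\widetilde S)$, where $\rho_{d+1}:=\tau_{d+1}^{d}(\mu_A\otimes\id_V^{\otimes d})$ is the $\mu_A$-twisted cyclic rotation of $V^{\otimes d+1}$. To prove this I would, inside each summand of $T$, use the twisted superpotential relation $\omega=(-1)^{d-1}\sum v_2\otimes\cdots\otimes v_d\otimes\mu_A(v_1)$ (writing $\omega=\sum v_1\otimes\cdots\otimes v_d$) to rewrite $(\gamma_{d-j}\otimes\id_V^{\otimes j})(\omega)$ as $\rho_{d+1}$ applied to the corresponding summand of $\widetilde S$, up to the global sign $(-1)^{d-1}$: the block $v_2\otimes\cdots\otimes v_{d-j+1}$ then lies in $W_{d-j}$ (using $W_d\subseteq W_{d-j}\otimes V^{\otimes j}$) and is exactly the block hit by $\gamma_{d-j}$ in $\widetilde S$ (using $W_d\subseteq V\otimes W_{d-j}\otimes V^{\otimes j-1}$), while the trailing $\mu_A(v_1)$ is precisely what $\rho_{d+1}$ appends.

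Finally, the direct calculation $\rho_{d+1}(X\otimes\omega)=\omega\otimes\mu_A(X)$ for $X\in V$ yields $\rho_{d+1}^{-1}(\omega\otimes Y)=\mu_A^{-1}(Y)\otimes\omega$. Applying $\rho_{d+1}^{-1}$ to $\omega\otimes(\delta_r-\delta'_r)=(-1)^d\rho_{d+1}(\widetilde S)$ pins down $\widetilde S=(-1)^d\mu_A^{-1}(\delta_r-\delta'_r)\otimes\omega$, whence $(\delta_l-\delta'_l)\otimes\omega=(-1)^{d+1}(\sigma\otimes\id_V^{\otimes d})(\widetilde S)=-\sigma\mu_A^{-1}(\delta_r-\delta'_r)\otimes\omega$. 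Injectivity of $-\otimes\omega$ yields $\delta_l-\delta'_l=-\sigma\mu_A^{-1}(\delta_r-\delta'_r)$, equivalently $\delta_r+\mu_A\sigma^{-1}(\delta_l)=\delta'_r+\mu_A\sigma^{-1}(\delta'_l)$. The main obstacle is the combinatorial identity $T=(-1)^{d-1}\rho_{d+1}(\widetilde S)$: it requires invoking the twisted superpotential inside each of the $d-1$ summands simultaneously and recognizing that the one-position shift of the $\gamma$-block, together with the appended $\mu_A(v_1)$, is exactly $\rho_{d+1}$, with the sign $(-1)^{d-1}$ coming from the superpotential identity.
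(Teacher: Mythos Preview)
Your proposal is correct and follows essentially the same route as the paper's proof. Both arguments evaluate Proposition~\ref{prop: relation betwenn different delta_r}(a) and (b) at $i=d$ (so that the images land in $W_{d+1}=0$) and then invoke the ${\mu_A}_{|V}$-twisted superpotential identity for $\omega$ from Theorem~\ref{thm: properties of Koszul regular algebras}(c) to pass between the $\gamma_{d-j}\otimes\id_V^{\otimes j}$ and $\id_V\otimes\gamma_{d-j}\otimes\id_V^{\otimes j-1}$ sums via a cyclic rotation on $V^{\otimes d+1}$. The only organizational difference is that the paper applies the single map $\tau_{d+1}^{d}(\mu_A\sigma^{-1}\otimes\id_V^{\otimes d})$ directly to both $(\delta_{d,l}-\delta'_{d,l})(\omega)$ and to the sum $S$ (the $\sigma^{-1}$ cancelling the leading $\sigma$ in $S$), whereas you first strip off $\sigma$ by introducing $\widetilde S$, use the pure $\mu_A$-rotation $\rho_{d+1}$, and then reinsert $\sigma$ at the end; this is a cosmetic rearrangement of the same computation.
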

\begin{proof}Firstly, one obtains that
\begin{align*}
\tau_d^{d-1}(\mu_A\sigma^{-1}\otimes\id_V^{\otimes d})(\delta_{d,l}-\delta'_{d,l})(\omega) &= \omega\otimes\left(\mu_A\sigma^{-1}(\delta_l-\delta'_l)\right).
\end{align*}
On the other hand,
\begin{align*}
&\tau_d^{d-1}(\mu_A\sigma^{-1}\otimes\id_V^{\otimes d})\left((-1)^{d-1}\sum_{j=1}^{d-1}(-1)^{j}\left(\sigma\otimes (\delta_{d-j,r}-\delta'_{d-j,r})\otimes\id_V^{\otimes j-1}\right)(\omega)\right)\\
=&\tau_d^{d-1}\left((-1)^{d-1}\sum_{j=1}^{d-1}(-1)^{j}\left(\mu_A\otimes (\delta_{d-j,r}-\delta'_{d-j,r})\otimes\id_V^{\otimes j-1}\right)(\omega)\right)\\
=&\sum_{j=1}^{d-1}(-1)^{j}\left( (\delta_{d-j,r}-\delta'_{d-j,r})\otimes\id_V^{\otimes j}\right)\left((-1)^{d-1}\tau_d^{d-1}(\mu_A\otimes\id_V^{\otimes d-1})(\omega)\right)\\
=&-(\delta_{d,r}-\delta'_{d,r})(\omega)\\
=&\omega\otimes(\delta'_r-\delta_r),
\end{align*}
where the third equality holds by $W_{d+1}=0$, Proposition \ref{prop: relation betwenn different delta_r} and Theorem \ref{thm: properties of Koszul regular algebras}(c). Then the result follows by Proposition \ref{prop: relation betwenn different delta_r} again.
\end{proof}

\begin{definition}
Let $A=T(V)/(R)$ be a Koszul AS-regular algebra, $\overline{\sigma}$ a graded automorphism of $A$ and $\overline{\delta}$ a degree-one $\overline{\sigma}$-derivation of $A$. Let $(\delta_l,\delta_r)$ be the pair of elements in $V$ with respect to some sequence pair $\left(\{\delta_{i,r}\},\{\delta_{i,l}\}\right)$ for the $\overline{\delta}$. Then the element $\delta_r+\mu_A\overline{\sigma}^{-1}(\delta_l)$ is called the \emph{$\overline{\sigma}$-divergence} of $\overline{\delta}$, denoted by $\nabla_{\overline{\sigma}}\cdot\overline{\delta}$.
\end{definition}

\begin{remark}
If $A$ is a commutative graded polynomial algebra generated in degree 1 and $\overline{\sigma}$ is the identity map, $\nabla_{\id}\cdot\overline{\delta}$ is the usual divergence $\nabla\cdot\overline{\delta}$ of $\overline{\delta}$ (see Theorem \ref{thm: NA of Ore ext over Polynomial} or \cite[Thoerem 1.1(1)]{LM}). It motivates the name ``$\overline{\sigma}$-divergence'' of a $\overline{\sigma}$-derivation $\overline{\delta}$.
\end{remark}

\subsection{Ext-algebras}

In this subsection, we compute the Yoneda product of the Ext-algebra $E(B)$ partially.  We refer \cite{JZ} for the definition of homological determinant $\hdet(\overline{\sigma})$ of a graded automorphism $\overline{\sigma}$.

Following the minimal free resolution (\ref{resolution of koszul regular}) of $k_B$, one obtains that
\begin{align*}
&E^{1}(B)=\uHom_{B}(B(-1)\oplus W_1\otimes B,k)\cong k(1)\oplus W_1^*,\\
&E^{d}(B)=\uHom_{B}(W_{d-1}\otimes B(-1)\oplus W_{d}\otimes B,k)\cong W_{d-1}^*(1)\oplus W_d^*,\\
&E^{d+1}(B)=\uHom_{B}(W_{d}\otimes B(-1),k)\cong W_{d}^*(1).
\end{align*}
Then $E^{1}(B)$ has a basis $\xi, x_1^*,x_2^,\cdots,x^*_n$, where $\xi$ corresponds to the identity of $k$ (or the augmentation $\varepsilon_B:B(-1)\to k(-1)$), $E^{d}(B)$ has a basis $\eta_1^*,\eta_2^,\cdots,\eta^*_n,\omega^*$, and $E^{d+1}(B)$ has a basis $\widetilde{\omega}^*$ corresponding to  the element $\omega^*$ in $W_d^*$.
\begin{remark}
In order to make notations succinct, we  use the basis of $W_1, W_{d-1},W_d$ to represent the basis of $E^{1}(B),E^d(B),E^{d+1}(B)$. However, it should keep in mind that each element in such basis also has a corresponding homomorphism through the minimal free resolution. To be specific, each $x_i^*$ corresponds to $x_i^*\otimes \varepsilon_B:W_1\otimes B\to k(-1),$  $\eta_i^*$ corresponds to $\eta_i^*\otimes \varepsilon_B:W_{d-1}\otimes B(-1)\to k(-d)$,
$\omega^*$ corresponds to $\omega^*\otimes \varepsilon_B:W_{d}\otimes B\to k(-d)$ and $\widetilde{\omega}^*$ corresponds to $\omega^*\otimes \varepsilon_B:W_{d}\otimes B(-1)\to k(-d-1)$for $i=1,\cdots,n$. In the sequel, we use such correspondence freely.
\end{remark}

Define a graded algebra homomorphism
$$
\begin{array}{cclc}
p_z:  &  B &\to &k[z]\\
~  &  \sum_{i=1}^m a_iz^i &\mapsto & \sum_{i=1}^m \varepsilon_A(a_i)z^i.
\end{array}
$$
By \cite[Theorem 1]{SWZ}, $E(p_z)$ is a graded algebra homomorphism from $E(k[z])$ to $E(B)$. Notice that,
$$
0\to
k\{z\}\otimes k[z]\xlongrightarrow{h}
k[z]\xlongrightarrow{\varepsilon_{k[z]}}
k_{k[z]}\to
0,
$$
where $k\{z\}$ is the vector space spanned by $z$ and the right graded $k[z]$-module homomorphism $h$ mapping $z\otimes 1$ to $z$, is a minimal free resolution of the graded trivial module $k_{k[z]}$. So
$$
E^{1}(k[z])=\uHom_{k[z]}(k\{z\}\otimes k[z],k)=(k\{z\})^*,
$$
and $z^*$ (or $z^*\otimes \varepsilon_{k[z]}$) is a basis of $E^1(k[z])$.

\begin{lemma}
$E(p_z)(z^*)=\xi$.
\end{lemma}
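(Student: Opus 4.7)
The plan is to compute $E(p_z)(z^*)$ directly from the definition of the induced map on Ext-algebras: lift $p_z\colon B\to k[z]$ to a chain map between the chosen minimal free resolutions of $k_B$ and $k_{k[z]}$, then read off the representing cocycle in $E^1(B)$.

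First, I would write down the initial segments of the two resolutions side by side: for $k_{k[z]}$ the resolution begins $k\{z\}\otimes k[z]\xlongrightarrow{h}k[z]\xlongrightarrow{\varepsilon_{k[z]}}k$ with $h(z\otimes 1)=z$, and for $k_B$ it begins
\[
B(-1)\oplus W_1\otimes B\xlongrightarrow{(\phi_0,\,\partial_1)} B\xlongrightarrow{\varepsilon_B} k,
\]
where $\phi_0=\lambda_z$ and $\partial_1=m_B$. The level-zero comparison map is $p_z$ itself.

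Next I would propose the natural degree-one lift, namely the graded right $B$-module homomorphism
\[
\widetilde{p}_z\colon B(-1)\oplus W_1\otimes B\longrightarrow k\{z\}\otimes k[z],\qquad (b,\,v\otimes b')\longmapsto z\otimes p_z(b),
\]
that is, $1\otimes p_z$ on the first summand and zero on the second. Compatibility with the differentials reduces to two checks: on $B(-1)$, $p_z\circ\phi_0(b)=p_z(zb)=z\cdot p_z(b)=h(z\otimes p_z(b))$, which holds since $p_z$ is an algebra map with $p_z(z)=z$; on $W_1\otimes B$, $p_z\circ\partial_1(v\otimes b')=p_z(v)p_z(b')=0$ because $p_z$ kills $V\subseteq A_{\geq 1}$. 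Hence $\widetilde{p}_z$ together with $p_z$ is a genuine chain map in degrees $0$ and $1$, which is all that is required to represent the Yoneda action on $E^1$.

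Finally, $E(p_z)(z^*)$ is represented by the cocycle $(z^*\otimes\varepsilon_{k[z]})\circ\widetilde{p}_z$. Evaluating on the distinguished generators of $B(-1)\oplus W_1\otimes B$, the image of $1\in B(-1)_1$ is $(z^*\otimes\varepsilon_{k[z]})(z\otimes 1)=1$, while every $v\in W_1$ is sent to $0$. These are exactly the values defining $\xi\in E^1(B)$, so $E(p_z)(z^*)=\xi$. I do not anticipate any real obstacle; the only point requiring care is the bookkeeping of the degree shift on $B(-1)$, which matches the shift in the definition of $z^*$ and makes the lift $\widetilde{p}_z$ a morphism of graded modules.
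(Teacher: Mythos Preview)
Your proof is correct and follows essentially the same approach as the paper: you construct the same degree-one lift (the paper calls it $f$) of $p_z$ between the two resolutions, verify the commutativity, and evaluate the composite $(z^*\otimes\varepsilon_{k[z]})\circ\widetilde{p}_z$ to obtain $\xi$. The only cosmetic difference is that you spell out the two commutativity checks explicitly whereas the paper declares the diagram commutative and invokes \cite{SWZ} for the functoriality of $E(-)$.
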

\begin{proof}Clearly, we have the following commutative diagram
$$
\xymatrix{
W_1\otimes B(-1)\oplus W_{2}\otimes B \ar[r]\ar[d]^{0}&
%W_1\otimes B(-1)\oplus W_{2}\otimes B \ar[rr]^(0.55){\left({\scriptscriptstyle\begin{array}{ll}{\scriptstyle -\partial_{1} }& {\scriptstyle0} \\ {\scriptstyle\phi_1 }&{\scriptstyle \partial_{2}}\end{array}}\right)}
B(-1)\oplus W_1\otimes B\ar[rr]^(0.65){{\left(\begin{array}{ll}{\scriptstyle \phi_0}& {\scriptstyle \partial_1}\end{array}\right)}}
\ar[d]^{f}
&&
B\ar[r]^{\varepsilon_B}\ar[d]^{p_z}
&
k_B\ar[r]\ar@{=}[d]
&0
\\
0\ar[r]&
k\{z\}\otimes  k[z]\ar[rr]^{h}&&
k[z]\ar[r]^{\varepsilon_{k[z]}}
&
k_{k[z]}\ar[r]&
0,
}
$$
where $f(b,w_1\otimes b')=z\otimes p_z(b)$ for any $b,b'\in B$ and $w_1\in W_1$. By \cite[Theorem 1]{SWZ}, one obtains that
$$
E(p_z)(z^*)(b,w_1\otimes b')=(z^*\otimes\varepsilon_{k[z]})f(b,w_1\otimes b')=\varepsilon_{k[z]}p_z(b)=\varepsilon_{B}(b),
$$
as elements in $\uHom_{B}(B(-1)\oplus W_1\otimes B,k)$. Hence, $E(p_z)(z^*)=\xi$.
\end{proof}

Now it turns to compute the Yoneda product of $E^1(B)$ and $E^{d}(B)$. We fix a sequence pair $(\{\delta_{i,r}\},\{\delta_{i,l}\})$ for $\overline{\delta}$, and $(\delta_r,\delta_l)$ is the pair of elements in $V$ with respect to $(\{\delta_{i,r}\},\{\delta_{i,l}\})$.

\begin{lemma}\label{lemma: E^1(B) E^d(B)}
In $E(B)$, for any $i,j=1,\cdots,n$,
$$
\begin{array}{ll}
 \xi\ast\omega^*=(-1)^d\hdet(\overline{\sigma})\,\widetilde{\omega}^*,&\xi\ast\eta_j^*=0,\\
x^*_i\ast\omega^*=(-1)^dx_i^*(\delta_r)\cdot\widetilde{\omega}^*, &x_i^*\ast\eta_j^*=(-1)^{d+1}(\eta_j^*\otimes x^*_i)(\omega)\cdot \widetilde{\omega}^*.
\end{array}
$$
\end{lemma}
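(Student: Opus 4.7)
The plan is to compute each of the four Yoneda products directly using the minimal free resolution (\ref{resolution of koszul regular}) of $k_B$. Since $E^{d+1}(B)\cong W_d^*(1)$ is one-dimensional, spanned by $\widetilde\omega^*$, each product is determined by its value on the generator $\omega\otimes 1 \in W_d\otimes B(-1)$, the top term of that resolution.

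I will compute all four products by lifting the degree-$d$ factors ($\omega^*$ and $\eta_j^*$) to chain maps of (\ref{resolution of koszul regular}) and composing with the degree-$1$ factors ($\xi$ and $x_i^*$). For the first factor, take the lift whose component at position $d$ is $\widetilde{\omega^*}_d: W_{d-1}\otimes B(-1)\oplus W_d\otimes B \to B(-d)$, $(w\otimes b,\, v\otimes b') \mapsto \omega^*(v)\, b'$. The top chain-map equation
$$
\partial_1 \circ \widetilde{\omega^*}_{d+1} = (-1)^d\, \widetilde{\omega^*}_d \circ \binom{-\partial_d}{\phi_d},
$$
together with the computation
$$
\phi_d(\omega \otimes 1)=\sigma^{\otimes d}(\omega)\otimes z + \omega\otimes\delta_r = \hdet(\overline\sigma)\,\omega\otimes z + \omega\otimes\delta_r
$$
(using the defining property $\sigma^{\otimes d}|_{W_d} = \hdet(\overline\sigma)\,\id$ of the homological determinant together with $\delta_{d,r}(\omega)=\omega\otimes\delta_r$), forces $\widetilde{\omega^*}_{d+1}(\omega\otimes 1) = (-1)^d\bigl(\hdet(\overline\sigma)\cdot 1,\;\delta_r\otimes 1\bigr)$ in $B(-d-1)\oplus V\otimes B(-d)$. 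Applying $\xi$ (which extracts the $B(-d-1)$-component and augments) gives $\xi \ast \omega^* = (-1)^d\hdet(\overline\sigma)\,\widetilde\omega^*$, and applying $x_i^*$ (which extracts the $x_i$-coefficient from the $V$-component) gives $x_i^* \ast \omega^* = (-1)^d x_i^*(\delta_r)\,\widetilde\omega^*$.

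For $\eta_j^*$, take the analogous lift whose $d$-th component $\widetilde{\eta_j^*}_d(w\otimes b,\, v\otimes b') = \eta_j^*(w)\, b$ is supported on the first summand. Expanding $\omega\in W_d \subseteq W_{d-1}\otimes V$ as $\omega=\sum_k \omega'_k\otimes v_k$ with $\omega'_k\in W_{d-1}$ and $v_k\in V$, the analogous chain-map equation applied to $\omega\otimes 1$ produces $\widetilde{\eta_j^*}_{d+1}(\omega\otimes 1) = \bigl(0,\;(-1)^{d+1}\sum_k \eta_j^*(\omega'_k)\,v_k\otimes 1\bigr)$. The vanishing of the $B(-d-1)$-component gives $\xi \ast \eta_j^* = 0$, while pairing the $V$-component with $x_i^*$ produces
$$
x_i^* \ast \eta_j^* = (-1)^{d+1}\sum_k \eta_j^*(\omega'_k)\,x_i^*(v_k) = (-1)^{d+1}(\eta_j^*\otimes x_i^*)(\omega)\,\widetilde\omega^*.
$$

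The main obstacle is bookkeeping the signs arising from (i) the mapping-cone differential $\binom{-\partial_d}{\phi_d}$ of the resolution and (ii) the standard $(-1)^d$ in the chain-map equation for a lift shifted by $d$ in homological degree. These combine to give the $(-1)^d$ and $(-1)^{d+1}$ factors in the statement.
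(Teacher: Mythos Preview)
Your proposal is correct and follows essentially the same approach as the paper: lift the degree-$d$ cocycles $\omega^*$ and $\eta_j^*$ to chain maps over the resolution (\ref{resolution of koszul regular}), compute the top component via $\phi_d(\omega\otimes 1)=\hdet(\overline\sigma)\,\omega\otimes z+\omega\otimes\delta_r$ and $-\partial_d(\omega\otimes 1)$, and then pair with $\xi$ and $x_i^*$. The paper packages $\eta_j^*$ and $\omega^*$ into a single lift $\varphi_1=(\eta_j^*\otimes\id_B,\ \omega^*\otimes\id_B)$ with explicit $\varphi_2$, whereas you treat them separately; and note that your displayed chain-map equation should have the full differential $(\phi_0,\partial_1)$ on the left rather than just $\partial_1$, though your subsequent computation uses both components correctly.
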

\begin{proof}
We claim that the following diagram is commutative.
$$
\xymatrix{
 W_d\otimes B(-1)\ar[r]^(0.38){\left({\scriptscriptstyle\begin{array}{l}{\scriptstyle -\partial_d} \\ {\scriptstyle\phi_d}\end{array}}\right)}\ar[d]^{\varphi_2}
 &
 W_{d-1}\otimes B(-1)\oplus W_d\otimes B\ar[d]^{\varphi_1} \ar[rd]^(0.6){\,(\eta_j^*\otimes\varepsilon_B,\  \omega^*\otimes\varepsilon_B)}\\
B(-d-1)\oplus W_1\otimes B(-d)\ar[r]^(0.63){(-1)^d\left(\begin{array}{ll}{\scriptstyle \phi_0}& {\scriptstyle \partial_1}\end{array}\right)}
&
B(-d)\ar[r]^{\varepsilon_B}&
k_B(-d),
}
$$
where $\varphi_1=(\eta_j^*\otimes\id_B,\ \omega^*\otimes\id_B)$, and
\begin{align*}
\varphi_2=(-1)^{d+1}\left(\begin{array}{c}
0\\
(\eta_j^*\otimes\id_V)\otimes\id_B
\end{array}
\right)+(-1)^d\left(\begin{array}{c}
\hdet(\overline{\sigma})(\omega^*\otimes \id_B)\\
(\omega^*\otimes\id_V)\delta_{d,r}\otimes \id_B
\end{array}
\right),
\end{align*}
and the first summand of $\varphi_2$ is induced by $\eta_j^*\otimes\varepsilon_B$ and the other one by $\omega^*\otimes\varepsilon_B$.

In fact, $\varepsilon_B\varphi_1=(\eta_j^*\otimes\varepsilon_B,\  \omega^*\otimes\varepsilon_B)$ is obvious, and
\begin{align*}
\varphi_1(-\partial_d\ \, \phi_d)^T&=-(\eta_j^*\otimes\id_B)\partial_d+(\omega^*\otimes\id_B)\phi_d\\
&=-\eta_j^*\otimes m_B+(\omega^*\otimes\id_B)(\sigma^{\otimes d}\otimes \lambda_z)+(\omega^*\otimes m_B)(\delta_{d,r}\otimes \id_B),\\
(-1)^d(\phi_0\ \partial_1)\varphi_2&=-\eta_j^*\otimes m_B+\hdet(\overline{\sigma})(\omega^*\otimes\lambda_z)+(\omega^*\otimes m_B)(\delta_{d,r}\otimes \id_B).
\end{align*}
By \cite[Theorem 1.2]{MS}, $\sigma^{\otimes d}=\hdet(\overline{\sigma})\cdot-:W_d\to W_d$. So the diagram is commutative. Then \begin{align*}
&\xi\ast(\eta_j^*,\  \omega^*)=(\varepsilon_B,0)\varphi_2=(-1)^d\hdet(\overline{\sigma})\,\omega^*\otimes\varepsilon_B,\\
&x^*_i\ast(\eta_j^*,\  \omega^*)=(0,x^*_i\otimes \varepsilon_B)\varphi_2=(-1)^{d+1}(\eta_j^*\otimes x^*_i)(\omega)\omega^*\otimes\varepsilon_B+(-1)^dx_i^*({\delta_r})\omega^*\otimes\varepsilon_B.
\end{align*}
The proof is completed.
\end{proof}

\begin{lemma}\label{lemma: E^d(B) E^1(B)}
In $E(B)$, for any $i,j=1,\cdots,n$,
$$
\begin{array}{ll}
 \omega^*\ast\xi= \widetilde{\omega}^*,&\eta_j^*\ast\xi=0,\\
\omega^*\ast x^*_i= -x_i^*(\delta_l)\widetilde{\omega}^*, &\eta_j^*\ast x_i^*=(-1)^d(x_i^*\sigma\otimes\eta_j^*)(\omega) \widetilde{\omega}^*.
\end{array}
$$
\end{lemma}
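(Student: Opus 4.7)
The plan is to mirror the strategy of Lemma~\ref{lemma: E^1(B) E^d(B)} with the roles of the $E^1$- and $E^d$-representatives exchanged: we lift each of $\xi$ and $x_i^*$ along the minimal resolution (\ref{resolution of koszul regular}) to a chain map of degree $-1$, and then precompose the lift at level $d$ with $(\eta_j^* \otimes \varepsilon_B,\, \omega^* \otimes \varepsilon_B)$. Concretely, for $k = 0, 1, \ldots, d$ I would construct graded $B$-module maps $\psi_k : P_{k+1} \to P_k(-1)$ (a separate sequence for each of $\xi$ and $x_i^*$) fitting into a commutative ladder with the differentials, with base cases $\psi_0^\xi = (\id, 0)$ and $\psi_0^{x_i^*}(b,\, w \otimes b') = x_i^*(w)\, b'$.

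For $\xi$, an easy induction shows that the ``projection''
\[
\psi_k^\xi(w_k \otimes b,\, w_{k+1} \otimes b') \;=\; (0,\, w_k \otimes b)
\]
is a chain lift at every level: both sides of the chain-map identity reduce to $(0,\, \pm\, \partial_k(w_k \otimes b))$ after a direct expansion of $\partial_{k+1}$. Composing $\psi_d^\xi$ with $(\eta_j^* \otimes \varepsilon_B,\, \omega^* \otimes \varepsilon_B)$ annihilates the first coordinate and evaluates $\omega^*$ on the second, yielding at once $\eta_j^* \ast \xi = 0$ and $\omega^* \ast \xi = \widetilde{\omega}^*$.

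For $x_i^*$ the chain lift is the left-sided analogue of the map $\varphi_2$ built in Lemma~\ref{lemma: E^1(B) E^d(B)}. Its essential content at each level $k$ is that $x_i^*$ extracts the leftmost tensor factor, with $\sigma$ compensating for the $z$-term in $\phi_{k-1}$ on the first output summand and a $\delta_{k,l}$-correction compensating for the derivation term on the second summand. Up to the overall sign fixed by the shift convention on $P_k(-1)$, the candidate is
\[
\psi_k^{x_i^*}(w_k \otimes b,\, w_{k+1} \otimes b') \;=\; \Bigl(-\,(x_i^* \sigma \otimes \id_V^{\otimes k-1})(w_k) \otimes b,\; -\,(x_i^* \otimes \id_V^{\otimes k})(\delta_{k,l}(w_k)) \otimes b \;-\; (x_i^* \otimes \id_V^{\otimes k})(w_{k+1}) \otimes b'\Bigr).
\]
Verifying that this is a chain map is the main computational step: after expanding $\partial_{k+1}$, $\partial_k$ and the lifts $\phi_{k-1}, \phi_k$ of Lemma~\ref{lemma: construction of delta_r} on each input summand, the chain-map identity reduces precisely to the defining relation (\ref{eq: relation between delta_l and delta_r}) of Lemma~\ref{lemma: construction of delta_l}, together with the inclusion $W_k \subseteq V \otimes W_{k-1} \cap W_{k-1} \otimes V$.

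At level $k = d$, feeding $\omega \otimes b$ into $\psi_d^{x_i^*}$ and using $\delta_{d,l}(\omega) = \delta_l \otimes \omega$ collapses the second output coordinate to $-\, x_i^*(\delta_l)\, \omega \otimes b$, so precomposition with $\omega^* \otimes \varepsilon_B$ yields $\omega^* \ast x_i^* = -\, x_i^*(\delta_l)\, \widetilde{\omega}^*$. Pairing the first coordinate with $\eta_j^* \otimes \varepsilon_B$ produces $\mp\,(x_i^* \sigma \otimes \eta_j^*)(\omega)\, \widetilde{\omega}^*$; the overall sign accumulated by iterating the shift convention through the ladder (exactly parallel to the $(-1)^d$ factor appearing in the formula for $\varphi_2$ of the previous lemma) converts this to $(-1)^d (x_i^* \sigma \otimes \eta_j^*)(\omega)\, \widetilde{\omega}^*$, as stated. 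The main obstacle is the chain-map verification for $\psi_k^{x_i^*}$, which is exactly where the coupling between $\sigma$ and $\delta_{k,l}$ forced by Lemma~\ref{lemma: construction of delta_l}(\ref{eq: relation between delta_l and delta_r}) appears; the sign bookkeeping at the end is then mechanical.
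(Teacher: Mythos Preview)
Your approach is exactly the paper's: lift $(\varepsilon_B,\,x_i^*\otimes\varepsilon_B)$ through the resolution and evaluate at level $d$; the paper packages both lifts into one matrix $\psi_s$ whose two summands are precisely your $\psi_k^\xi$ and $\psi_k^{x_i^*}$, and the chain-map identity is checked by the same reduction to relation~(\ref{eq: relation between delta_l and delta_r}).

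One caveat on your ``up to the overall sign'' disclaimer: it is not a single global sign. The paper's $x_i^*$-lift carries level-dependent factors,
\[
\psi_s^{x_i^*} \;=\; (-1)^s
\begin{pmatrix}
(x_i^*\sigma\otimes\id_V^{\otimes s-1})\otimes\id_B & 0\\
(-1)^{s-1}\bigl[(x_i^*\otimes\id_V^{\otimes s})\delta_{s,l}\bigr]\otimes\id_B & (x_i^*\otimes\id_V^{\otimes s})\otimes\id_B
\end{pmatrix},
\]
and it is precisely these alternating $(-1)^s$ weights that make the chain-map check collapse to $\delta_{s+1,r}+(-1)^{s+1}\delta_{s+1,l}=\sigma\otimes\delta_{s,r}+(-1)^{s+1}\delta_{s,l}\otimes\id_V$. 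Your fixed-sign candidate as written will not satisfy the chain-map identity; you need the $(-1)^s$ on the $\sigma$- and $W_{s+1}$-terms but \emph{not} on the $\delta_{s,l}$-term. Once this is inserted, the $(-1)^d$ in $\eta_j^*\ast x_i^*$ and the $-1$ in $\omega^*\ast x_i^*$ drop out of $\psi_d$ with no further accumulation, so the ``iterating through the ladder'' phrasing is misleading---the sign sits at level $d$ directly.
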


\begin{proof}Firstly, we check the following diagram is commutative.
$${\small
\xymatrix{
W_d\otimes B(-1)\ \ \ar[r]^(0.4){\scriptscriptstyle \left({\scriptscriptstyle\begin{array}{l}{\scriptstyle -\partial_d} \\ {\scriptstyle\phi_d}\end{array}}\right)}\ar[d]^{\psi_{d}}
& W_{d-1}\otimes B(-1)\oplus W_d\otimes B\ar[r]\ar[d]^{\psi_{d-1}}
&\cdots\ar[r]
&W_2\otimes B(-1)\oplus W_{3}\otimes B\ar[d]^{\psi_{2}}
\\
W_{d-1}\otimes B(-2)\oplus W_d\otimes B(-1)\ \ \ar[r]^{\scriptscriptstyle\left({\scriptscriptstyle\begin{array}{ll}{\scriptstyle \partial_{d-1} }& {\scriptstyle0} \\ {\scriptstyle-\phi_{d-1} }&{\scriptstyle -\partial_{d}}\end{array}}\right)}
&
\ \ \ W_{d-2}\otimes B(-2)\oplus W_{d-1}\otimes B(-1)\ar[r]
&\cdots\ar[r]
&W_1\otimes B(-2)\oplus W_{2}\otimes B(-1)
}
}$$
\begin{flushright}
${\small
\xymatrix{
\ar[rr]^(0.33){\left({\scriptscriptstyle\begin{array}{ll}{\scriptstyle -\partial_{2} }& {\scriptstyle0} \\ {\scriptstyle\phi_2 }&{\scriptstyle \partial_{3}}\end{array}}\right)}
&&W_1\otimes B(-1)\oplus W_{2}\otimes \ar[d]^{\psi_{1}} B\ar[rr]^(0.53){\left({\scriptscriptstyle\begin{array}{ll}{\scriptstyle -\partial_{1} }& {\scriptstyle0} \\ {\scriptstyle\phi_1 }&{\scriptstyle \partial_{2}}\end{array}}\right)}
&&B(-1)\oplus W_1\otimes B\ar[d]^{\psi_{0}}\ar[rd]^{(\varepsilon_B,\ x_i^*\otimes \varepsilon_B)}\\
\ar[rr]^(0.33){\left({\scriptscriptstyle\begin{array}{ll}{\scriptstyle \partial_{1} }& {\scriptstyle0} \\ {\scriptstyle-\phi_1 }&{\scriptstyle -\partial_{2}}\end{array}}\right)}
&&B(-2)\oplus W_1\otimes B(-1)\ar[rr]^(0.6){\left(\begin{array}{ll}{\scriptstyle -\phi_0}& {\scriptstyle -\partial_1}\end{array}\right)}
&&B(-1)\ar[r]^{\varepsilon_{B}}
&
k_B(-1),
}
}
\qquad$
\end{flushright}
where %$\psi_0=(\id_B,\ x_i^*\otimes\id_B)$,
\begin{align*}
%\psi_2&=
%\left(
%\begin{array}{cc}
%0 &  0\\
%\id_{W_{s-1}\otimes B(-1)}&0
%\end{array}
%\right)-\left(
%\begin{array}{cc}
%x_i^*\sigma\otimes\id_B &  0\\
%  \left[(x_i^*\otimes\id_V)\delta \right]\otimes\id_B &  - (x_i^*\otimes\id_V)\otimes\id_B
%\end{array}
%\right),\\
&\psi_0=(\id_B,\ x_i^*\otimes\id_B),\\
&\psi_s=
\left(
\begin{array}{cc}
0 &  0\\
\id_{W_{s}\otimes B(-1)}&0
\end{array}
\right)+
(-1)^{s}
\left(
\begin{array}{cc}
(x_i^*\sigma\otimes\id_V^{\otimes s-1})\otimes\id_B &  0\\
  (-1)^{s-1}\left[(x_i^*\otimes\id_V^{\otimes s})\delta_{s,l} \right]\otimes\id_B &  (x_i^*\otimes\id_V^{\otimes s})\otimes\id_B
\end{array}
\right),\ \  \ 1\leq s\leq d-1,\\
&\psi_{d}=
\left(
\begin{array}{c}
0 \\
\id_{W_{d}\otimes B(-1)}
\end{array}
\right)+
(-1)^d
\left(
\begin{array}{c}
(x_i^*\sigma\otimes\id_V^{\otimes d-1})\otimes\id_B \\
(-1)^{d-1}\left[(x_i^*\otimes\id_V^{\otimes d})\delta_{d,l} \right]\otimes\id_B
\end{array}
\right),
\end{align*}
and the first summand of those maps is induced by $\xi$ and the other one by $x_i^*\otimes \varepsilon_B$.

Write $\partial_i^B$ for the $i$-th differential in the minimal resolution of $k_B$ for $i\geq 1$. Obviously, $\varepsilon_B\psi_0=(\varepsilon_B,\ x_i^*\otimes\varepsilon_B)$. Also, $\psi_0\partial^B_2=-\partial^B_0\psi_1$, since $\delta=\delta_{1,r}=\delta_{1,l}$ and
\begin{align*}
\psi_0\partial^B_2&=\left(-\partial_1+(x_i^*\otimes\id_B)\phi_1,\ (x_i^*\otimes\id_B)\partial_2      \right)\\
&=\left(-\partial_1+x_i^*\sigma\otimes\lambda_z+(x_i^*\otimes m_B)(\delta_{1,r}\otimes \id_B),\ x_i^*\otimes m_B    \right),\\
-\partial^B_0\psi_1&=\left(
-\partial_1+\phi_0(x^*_i\sigma\otimes\id_B)+\partial_1\left[(x_1^*\otimes\id_V)\delta_{1,l}\otimes\id_B\right],\ \partial_1((x_i^*\otimes\id_V)\otimes \id_B)
         \right)\\
&=\left(
-\partial_1+x^*_i\sigma\otimes\lambda_z+(x_1^*\otimes m_B)(\delta_{1,l}\otimes\id_B),\ x_i^*\otimes m_B\right).
\end{align*}
For $1\leq s\leq d-2$,
\begin{align*}
\psi_{s}\partial_{s+2}^B&=
\left(
\begin{array}{cc}
0 &  0\\
-\partial_{s+1}&0
\end{array}
\right)
+
(-1)^s
\left(
\begin{array}{cc}
-(x_i^*\sigma\otimes\id_V^{\otimes s-2})\otimes m_B &  0\\
  \zeta_s&  (x_i^*\otimes\id_V^{\otimes s-1})\otimes m_B
\end{array}
\right),\\
%&\left(
%\begin{array}{cc}
%-(x_i^*\sigma\otimes\id_V^{\otimes s-2})\otimes m_B &  0\\
%  \left[(x_i^*\otimes\id_V^{\otimes s-1})\delta_2 \right]\otimes m_B+(x^*_i\sigma\otimes \sigma^{\otimes s-1})\otimes \lambda_z+(x^*_i\otimes\id_V^{\otimes s-1}\otimes m_B)(\delta_1\otimes\id_B) &   (x_i^*\otimes\id_V^{\otimes s-1})\otimes m_B
%\end{array}
%\right)\\
-\partial_{s+1}^B\psi_{s+1}&=\left(
\begin{array}{cc}
0 &  0\\
-\partial_{s+1}&0
\end{array}
\right)+
(-1)^{s}\left(
\begin{array}{cc}
-(x_i^*\sigma\otimes\id_V^{\otimes s-2})\otimes m_B &  0\\
 \zeta'_s&   (x_i^*\otimes\id_V^{\otimes s-1})\otimes m_B
\end{array}
\right),
\end{align*}
where
\begin{align*}
\zeta_s&=(-1)^s\left(\left((x_i^*\otimes\id_V^{\otimes s})\delta_{s,l} \right)\otimes\id_B\right)\partial_{s+1} +\left((x_i^*\otimes\id_V^{\otimes s})\otimes\id_B \right)\phi_{s+1}\\
&=(-1)^s\left((x_i^*\otimes\id_V^{\otimes s})\delta_{s,l} \right)\otimes  m_B+\left(x_i^*\sigma \otimes\sigma^{\otimes s}\right)\otimes\lambda_z+\left(x_i^*\otimes\id_V^{\otimes s}\otimes m_B\right)(\delta_{s+1,r}\otimes\id_B)\\
&=\left(x_i^*\sigma \otimes\sigma^{\otimes s}\right)\otimes\lambda_z+(-1)^s\left(x_i^*\otimes\id_V^{\otimes s}\otimes m_B\right)\left((\delta_{s,l}\otimes\id_V)\otimes\id_B \right)+\left(x_i^*\otimes\id_V^{\otimes s}\otimes m_B\right)\left(\delta_{s+1,r}\otimes\id_B\right),\\
\zeta'_s&=\phi_{s} \left((x_i^*\sigma\otimes\id_V^{\otimes s})\otimes\id_B\right)   +(-1)^s\partial_{s+1}    \left(\left((x_i^*\otimes\id_V^{\otimes s+1})\delta_{s+1,l} \right)\otimes\id_B\right)\\
&=\left(x_i^*\sigma\otimes\sigma^{\otimes s}\right)\otimes\lambda_z+\left(\id_V^{\otimes s}\otimes m_B\right)\left((x_i^*\sigma\otimes\delta_{s,r})\otimes\id_B\right)+(-1)^s\left(x_i^*\otimes\id_V^{\otimes s}\otimes m_B\right)\left(\delta_{s+1,l} \otimes\id_B\right)\\
&=\left(x_i^*\sigma\otimes\sigma^{\otimes s}\right)\otimes\lambda_z+\left(x_i^*\otimes\id_V^{\otimes s}\otimes m_B\right)\left((\sigma\otimes\delta_{s,r})\otimes\id_B\right)+(-1)^s\left(x_i^*\otimes\id_V^{\otimes s}\otimes m_B\right)\left(\delta_{s+1,l} \otimes\id_B\right).
\end{align*}
By Lemma \ref{lemma: construction of delta_l}, $\zeta_s=\zeta_s'$ and $\psi_s\partial^B_{s+2}=-\partial_{s+1}^B\psi_{s+1}$. Similarly, one obtains $\psi_{d-1}\partial^B_{d+1}=-\partial_{d}^B\psi_{d}$.

Then, for any $i,j=1,\cdots,n$, we have in $E(B)$,
 \begin{align*}
&\eta^*_j\ast(\xi,x_i^*)=(\eta_j^*\otimes\varepsilon_B)\ast(\varepsilon_B,x_i^*\otimes \varepsilon_B)=(\eta_j^*\otimes\varepsilon_B,0)\psi_d=(-1)^d(x_i^*\sigma\otimes\eta_j^*)(\omega)\omega^*\otimes \varepsilon_B,\\
&\omega^*\ast(\xi,x_i^*)=(\omega^*\otimes\varepsilon_B)\ast(\varepsilon_B,x_i^*\otimes \varepsilon_B)=(0,\omega^*\otimes\varepsilon_B)\psi_d=\omega^*\otimes \varepsilon_B-x_i^*(\delta_l)\omega^*\otimes \varepsilon_B .
\end{align*}
The result follows.
\end{proof}

\subsection{Nakayama automorphisms}
This subsection devotes to proving the main result of this paper. For the completeness, we give a whole computation of the Nakayama automorphism of a graded Ore extension of a Koszul AS-regular algebra, which includes \cite[Proposition 3.15]{ZVZ} partially. For the automorphism $\sigma\in GL(V)$ and the Nakayama automorphism $\mu_A$ of $A$, there exist two invertible $n\times n$ matrixes $M=(m_{ij}),P=(p_{ij})$ over $k$, such that
$$\sigma
\left(
\begin{array}{c}
x_1\\
x_2\\
\vdots\\
x_n
\end{array}
\right)=M\left(
\begin{array}{c}
x_1\\
x_2\\
\vdots\\
x_n
\end{array}
\right),\qquad
\mu_A
\left(
\begin{array}{c}
x_1\\
x_2\\
\vdots\\
x_n
\end{array}
\right)=P\left(
\begin{array}{c}
x_1\\
x_2\\
\vdots\\
x_n
\end{array}
\right).
$$

\begin{lemma}\label{lemma: Yoneda product in E(A)}
For any $j=1,\cdots,n$,
%$$
%(\eta_j^*\otimes x^*_i)(\omega)=(-1)^{d-1}\sum_{s=1}^{n}p_{si}(x_i^*\otimes\eta^*_j)(\omega).
%$$
$$
\left(
\begin{array}{c}
\eta_j^*\otimes x^*_1(\omega)\\
\eta_j^*\otimes x^*_2(\omega)\\
\vdots\\
\eta_j^*\otimes x^*_n(\omega)
\end{array}
\right)
=(-1)^{d-1}P^T\left(
\begin{array}{c}
 x^*_1\otimes\eta^*_j(\omega)\\
x^*_2\otimes\eta^*_j(\omega)\\
\vdots\\
x^*_n\otimes\eta^*_j(\omega)
\end{array}
\right).
$$
\end{lemma}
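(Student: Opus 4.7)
The plan is to exploit the fact that $\omega\in W_d$ is a $\mu_A|_V$-twisted superpotential, as stated in Theorem \ref{thm: properties of Koszul regular algebras}(c). This means
\begin{equation*}
\omega=(-1)^{d-1}\tau_d^{d-1}(\mu_A|_V\otimes \id_V^{\otimes d-1})(\omega),
\end{equation*}
where $\tau_d^{d-1}$ is the cyclic rotation sending $v_1\otimes v_2\otimes\cdots\otimes v_d$ to $v_2\otimes\cdots\otimes v_d\otimes v_1$. So if one writes $\omega=\sum v_1\otimes v_2\otimes\cdots\otimes v_d$ formally, the identity above reads $\omega=(-1)^{d-1}\sum v_2\otimes\cdots\otimes v_d\otimes \mu_A(v_1)$.

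Next, I would pair both sides with $\eta_j^*\otimes x_i^*\in (V^{\otimes d-1})^*\otimes V^*$. The left-hand side gives directly $(\eta_j^*\otimes x_i^*)(\omega)$. For the right-hand side, the pairing produces $(-1)^{d-1}\sum \eta_j^*(v_2\otimes\cdots\otimes v_d)\cdot x_i^*(\mu_A(v_1))$, which is $(-1)^{d-1}\bigl((x_i^*\circ\mu_A|_V)\otimes \eta_j^*\bigr)(\omega)$.

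The only remaining task is to rewrite $x_i^*\circ\mu_A|_V$ as a combination of the dual basis vectors. From the matrix equation $\mu_A(x_k)=\sum_\ell p_{k\ell}x_\ell$, one gets $x_i^*(\mu_A(x_k))=p_{ki}$, i.e.\ $x_i^*\circ\mu_A|_V=\sum_{k=1}^n p_{ki}\,x_k^*$. Substituting this back,
\begin{equation*}
(\eta_j^*\otimes x_i^*)(\omega)=(-1)^{d-1}\sum_{k=1}^n p_{ki}\,(x_k^*\otimes\eta_j^*)(\omega),
\end{equation*}
and since $p_{ki}=(P^T)_{ik}$, assembling this for $i=1,\dots,n$ yields exactly the matrix identity asserted. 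The only ``obstacle'' is a bookkeeping one: keeping the sign $(-1)^{d-1}$ straight and making sure that the indices are transposed correctly so that $P^T$ (rather than $P$) appears.
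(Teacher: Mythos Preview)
Your proof is correct, but it takes a different route from the paper's own argument. The paper proceeds via the Yoneda product of $E(A)$: it first computes (by lifting through the Koszul resolution, as in Lemmas \ref{lemma: E^1(B) E^d(B)} and \ref{lemma: E^d(B) E^1(B)}) that
\[
x_i^*\ast\eta_j^*=(-1)^{d-1}(\eta_j^*\otimes x_i^*)(\omega)\,\omega^*,\qquad
\eta_j^*\ast x_i^*=(-1)^{d-1}(x_i^*\otimes\eta_j^*)(\omega)\,\omega^*,
\]
and then invokes Theorem \ref{thm: properties of Koszul regular algebras}(a,b) to use the graded Frobenius bilinear form on $E(A)$ together with the identification $\mu_{E(A)}(x_i^*)=\sum_s p_{si}x_s^*$. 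The identity $\langle x_i^*,\eta_j^*\rangle=(-1)^{d-1}\langle\eta_j^*,\mu_{E(A)}(x_i^*)\rangle$ then yields the claim.

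You instead appeal directly to Theorem \ref{thm: properties of Koszul regular algebras}(c), the twisted-superpotential identity for $\omega$, and read off the relation by pairing with $\eta_j^*\otimes x_i^*$. This is shorter and avoids any explicit Yoneda-product computation or lifting argument. Conceptually the two arguments are close cousins---the twisted-superpotential identity is itself a repackaging of the Frobenius/Nakayama structure of $E(A)$---but within the logical framework of this paper, where part (c) is already available as a cited fact, your argument is the more economical one. The paper's route has the advantage of being parallel to the $E(B)$-computations in Lemmas \ref{lemma: E^1(B) E^d(B)} and \ref{lemma: E^d(B) E^1(B)}, which is presumably why the authors chose it.
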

\begin{proof}Since $A$ is a Koszul AS-regular algebra of dimension $d$, the minimal free  resolution (\ref{resolution of k_A}) of trivial module $k_A$ becomes
$$
0\xlongrightarrow{}W_d\otimes A\xlongrightarrow{\partial^A_d}W_{d-1}\otimes A\xlongrightarrow{\partial^A_{d-1}}\cdots\xlongrightarrow{\partial^A_{3}}W_2\otimes A\xlongrightarrow{\partial^A_{2}}W_1\otimes A\xlongrightarrow{\partial^A_{1}}A\xlongrightarrow{\varepsilon_A}k_A\to0.
$$
So $E^1(A)=W_1^*$, $E^{d-1}(A)=W_{d-1}^*$, $E^{d}(A)=W_{d}^*$.  By a similar argument in the proof of Lemma \ref{lemma: E^1(B) E^d(B)} and Lemma \ref{lemma: E^d(B) E^1(B)}, one obtains that the Yoneda products in $E(A)$ of $\{x_i^*\}_{i=1}^{n}$ and $\{\eta_{j}^*\}_{j=1}^n$, which are basis of $E^1(A)$ and $E^{d-1}(A)$ respectively, are as follows:
$$
x_i^*\ast \eta_j^*= (-1)^{d-1}(\eta_j^*\otimes x_i^*)(\omega)\omega^*,\qquad \eta_j^*\ast x_i^*=(-1)^{d-1}(x_i^*\otimes\eta^*_j)(\omega)\omega^*.
$$
By  Theorem  \ref{thm: properties of Koszul regular algebras}(a,b), $\mu_{E(A)}(x_i^*)=\sum_{s=1}^{n}p_{si}x_s^*$, and $E(A)$ is graded Frobenius with the bilinear form as follows
\begin{align*}
&\langle x_i^*, \eta_j^*\rangle=(x_i^*\ast \eta_j^*)(\omega)=(-1)^{d-1}(\eta_j^*\otimes x^*_i)(\omega),\\
&\langle \eta_j^*, \mu_{E(A)}(x_i^*)\rangle=\sum_{s=1}^{n}p_{si}\langle \eta_j^*, x_s^*\rangle=\sum_{s=1}^{n}p_{si}(\eta_j^*\ast x_s^*)(\omega)=(-1)^{d-1}\sum_{s=1}^{n}p_{si}(x_s^*\otimes\eta^*_j)(\omega),
\end{align*}
%$$
%(\eta_j^*\otimes x^*_i)(\omega)=(-1)^{d-1}\langle x_i^*, \eta_j^*\rangle=\langle \eta_j^*, \mu_{E(A)}(x_i^*)\rangle=\sum_{s=1}^{n}p_{si}\langle \eta_j^*, x_s^*\rangle=(-1)^{d-1}\sum_{s=1}^{n}p_{si}(x_i^*\otimes\eta^*_j)(\omega),
%$$
for any $i,j=1,\cdots,n$. The result follows.
\end{proof}

Now we prove the main result of this paper.

\begin{theorem}\label{thm: nakayama automorphism of ore extension} Let $B=A[z;\overline{\sigma},\overline{\delta}]$ be a graded Ore extension of a Koszul AS-regular algebra $A$, where $\overline{\sigma}$ is a graded automorphism of $A$ and $\overline{\delta}$ is a degree-one $\overline{\sigma}$-derivation. Then the Nakayama automorphism $\mu_B$ of $B$ satisfies
\begin{align*}
{\mu_{B}}_{\mid A}=\overline{\sigma}^{~-1}\mu_A,\qquad
%&\mu_B(z)=\hdet\sigma\ z+c_1x_1+c_2+\cdots+c_nx_n,\\
\mu_B(z)=\hdet(\overline{\sigma})\, z+\nabla_{\overline{\sigma}}\cdot\overline{\delta},
\end{align*}
where $\mu_A$ is the Nakayama automorphism of $A$ and $\nabla_{\overline{\sigma}}\cdot\overline{\delta}$ is the $\overline{\sigma}$-divergence of $\overline{\delta}$.
%where
%$$
%\left(
%\begin{array}{c}
%c_1\\
%c_2\\
%\vdots\\
%c_n
%\end{array}
%\right)
%=
%\left(
%\begin{array}{c}
% x_1^*(\delta_r)\\
% x_2^*(\delta_r)\\
%\vdots\\
% x_n^*(\delta_r)
%\end{array}
%\right)+(M^{-1}P)^T
%\left(
%\begin{array}{c}
% x_1^*(\delta_l)\\
% x_2^*(\delta_l)\\
%\vdots\\
% x_n^*(\delta_l)
%\end{array}
%\right).
%$$
\end{theorem}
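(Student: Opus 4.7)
The strategy is to exploit the Frobenius duality between $B$ and its Ext-algebra. Since $B=A[z;\overline{\sigma},\overline{\delta}]$ is Koszul AS-regular of dimension $d+1$, Theorem \ref{thm: properties of Koszul regular algebras}(a,b) applied to $B$ gives that $E(B)$ is graded Frobenius with classical Nakayama automorphism $\mu_{E(B)}$ satisfying $\mu_{E(B)}|_{E^1(B)}=(\mu_B|_{B_1})^{*}$. Since $B$ is generated in degree one, knowing $\mu_B|_{B_1}$ determines $\mu_B$ on all of $B$. It therefore suffices to compute $\mu_{E(B)}$ on $E^1(B)$, which by the minimal free resolution (\ref{resolution of koszul regular}) has basis $\{\xi,x_1^{*},\ldots,x_n^{*}\}$.

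The plan is to set up the Frobenius form on $E(B)$ explicitly and read off $\mu_{E(B)}$ from the defining identity $\langle\alpha,\beta\rangle=(-1)^{d}\langle\beta,\mu_{E(B)}(\alpha)\rangle$ for $\alpha\in E^{1}(B),\beta\in E^{d}(B)$. Since $E^{d+1}(B)=k\widetilde{\omega}^{*}$, the form is characterized by $\alpha\ast\beta=\langle\alpha,\beta\rangle\widetilde{\omega}^{*}$, and all eight needed Yoneda products between $\{\xi,x_i^{*}\}\subset E^{1}(B)$ and $\{\omega^{*},\eta_j^{*}\}\subset E^{d}(B)$ are already in hand from Lemmas \ref{lemma: E^1(B) E^d(B)} and \ref{lemma: E^d(B) E^1(B)}. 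Writing $\mu_{E(B)}(\xi)=a\xi+\sum_{s}b_{s}x_{s}^{*}$ and $\mu_{E(B)}(x_i^{*})=c_i\xi+\sum_{s}d_{is}x_{s}^{*}$, the pairing of $\xi$ with $\omega^{*}$ immediately yields $a=\hdet(\overline{\sigma})$, while pairing $\xi$ with each $\eta_j^{*}$ produces a homogeneous linear system $\sum_{s}b_{s}(x_{s}^{*}\sigma\otimes\eta_j^{*})(\omega)=0$ whose only solution is $b_{s}=0$, by the nondegeneracy of the Gram matrix $\bigl((x_t^{*}\otimes\eta_j^{*})(\omega)\bigr)_{t,j}$ (which is the structure matrix of the Frobenius pairing in $E(A)$ between $E^{1}(A)$ and $E^{d-1}(A)$, hence invertible).

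Pairing $x_i^{*}$ with $\omega^{*}$ produces the relation $c_i=x_i^{*}(\delta_r)+\sum_{s}d_{is}x_{s}^{*}(\delta_l)$, while pairing $x_i^{*}$ with $\eta_j^{*}$ and substituting Lemma \ref{lemma: Yoneda product in E(A)} to convert $(\eta_j^{*}\otimes x_i^{*})(\omega)$ into a linear combination of the $(x_{t}^{*}\otimes\eta_j^{*})(\omega)$'s produces a matrix identity among $(d_{is})$, the matrix $M$ of $\overline{\sigma}|_V$, and the matrix $P$ of $\mu_A|_V$, which determines $(d_{is})$ completely. Dualizing this identity back via $\mu_{E(B)}|_{E^{1}(B)}=(\mu_B|_{B_1})^{*}$ reproduces the Liu--Wang--Wu formula $\mu_B|_A=\overline{\sigma}^{-1}\mu_A$. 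Finally, using the now-known $\mu_B|_A$, the quantity $\sum_{s}d_{is}x_{s}^{*}(\delta_l)$ is recognized as $x_i^{*}(\mu_A\overline{\sigma}^{-1}(\delta_l))$, so that $\sum_i c_i x_i=\delta_r+\mu_A\overline{\sigma}^{-1}(\delta_l)=\nabla_{\overline{\sigma}}\cdot\overline{\delta}$ in view of Corollary \ref{coro: relation between delta_r and delta l}, yielding $\mu_B(z)=\hdet(\overline{\sigma})\,z+\nabla_{\overline{\sigma}}\cdot\overline{\delta}$.

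The main obstacle is bookkeeping across multiple transposes, dualities and composition orders: one must carefully track the order of function composition (e.g.\ $\overline{\sigma}^{-1}\mu_A$ versus $\mu_A\overline{\sigma}^{-1}$) when passing between $B$ and its dual side $E(B)$, apply Lemma \ref{lemma: Yoneda product in E(A)} in precisely the right direction so that the Frobenius identity $\langle x_i^{*},\eta_j^{*}\rangle=(-1)^{d}\langle\eta_j^{*},\mu_{E(B)}(x_i^{*})\rangle$ collapses to a clean matrix equation, and ensure that the resulting combination $\delta_r+\mu_A\overline{\sigma}^{-1}(\delta_l)$ is precisely the $\overline{\sigma}$-divergence rather than one of the several near-variants one might naively obtain.
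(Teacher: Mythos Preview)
Your proposal is correct and follows essentially the same approach as the paper's own proof: both compute $\mu_{E(B)}$ on $E^{1}(B)$ by combining the Frobenius pairing identity with the Yoneda products from Lemmas \ref{lemma: E^1(B) E^d(B)} and \ref{lemma: E^d(B) E^1(B)}, use Lemma \ref{lemma: Yoneda product in E(A)} to convert the resulting relations into the matrix equation involving $(M^{-1}P)^{T}$, and then dualize via Theorem \ref{thm: properties of Koszul regular algebras}(b). The only cosmetic difference is that the paper writes out the matrix computations explicitly whereas you describe them structurally; your invocation of Corollary \ref{coro: relation between delta_r and delta l} at the end is slightly misplaced, since $\delta_r+\mu_A\overline{\sigma}^{-1}(\delta_l)=\nabla_{\overline{\sigma}}\cdot\overline{\delta}$ is the definition of the $\overline{\sigma}$-divergence rather than a consequence of that corollary (which only ensures well-definedness).
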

\begin{proof}
Let $(\delta_r,\delta_l)$ be the pair of elements in $V$ with respect to some sequence pair $(\{\delta_{i,r}\},\{\delta_{i,l}\})$ for $\overline{\delta}$.
%$\delta:V\to V\otimes V$ be a linear map which induces the map $\overline{\delta}$, and

By Lemma \ref{lemma: E^1(B) E^d(B)} and Lemma \ref{lemma: E^d(B) E^1(B)},
\begin{align*}
&\langle \xi, \omega^*\rangle=(-1)^d\hdet(\overline{\sigma})=(-1)^d\langle \omega^*, \hdet(\overline{\sigma})\, \xi\rangle,\\
&\langle \xi, \eta_j^*\rangle=0=(-1)^d\langle \eta_j^*, \hdet(\overline{\sigma})\, \xi\rangle,
\end{align*}
for any $j=1,\cdots,n$. Hence,
$$\mu_{E(B)}(\xi)=\hdet(\overline{\sigma})\,\xi.$$

Write $\omega=x_1\otimes \upsilon_1+x_2\otimes\upsilon_2+\cdots+x_n\otimes \upsilon_n$, where $\upsilon_1,\upsilon_2,\cdots,\upsilon_n\in W_{d-1}$. Then $(x_i^*\otimes \eta_j^*)(\omega)=\eta_j^*(\upsilon_i)$, and
$$
(x_i^*\sigma\otimes \eta_j^*)(\omega)=(x_i^*\otimes\eta^*_{j})\left(\sum_{s,t=1}^n m_{st}(x_t\otimes \upsilon_s)\right)=\sum_{s=1}^n m_{si}\eta_{j}^*(\upsilon_s).
$$
By Lemma \ref{lemma: Yoneda product in E(A)}, one obtains
$$
\left(
\begin{array}{c}
(x_1^*\sigma\otimes \eta_j^*)(\omega)\\
(x_2^*\sigma\otimes \eta_j^*)(\omega)\\
\vdots\\
(x_n^*\sigma\otimes \eta_j^*)(\omega)
\end{array}
\right)
=M^T\left(
\begin{array}{c}
\eta_j^*(\upsilon_1)\\
\eta_j^*(\upsilon_2)\\
\vdots\\
\eta_j^*(\upsilon_n)
\end{array}
\right)=M^T\left(
\begin{array}{c}
(x_1^*\otimes \eta_j^*)(\omega)\\
(x_2^*\otimes \eta_j^*)(\omega)\\
\vdots\\
(x_n^*\otimes \eta_j^*)(\omega)
\end{array}
\right)=(-1)^{d-1}(P^{-1}M)^T
\left(
\begin{array}{c}
(\eta_j^*\otimes x_1^*)(\omega)\\
(\eta_j^*\otimes x_2^*)(\omega)\\
\vdots\\
(\eta_j^*\otimes x_n^*)(\omega)
\end{array}
\right).
$$
By Lemma \ref{lemma: E^1(B) E^d(B)} and Lemma \ref{lemma: E^d(B) E^1(B)},
$$
\left(
\begin{array}{c}
\langle x_1^*,\eta_j^*\rangle\\
\langle x_2^*,\eta_j^*\rangle\\
\vdots\\
\langle x_n^*,\eta_j^*\rangle
\end{array}
\right)=(-1)^{d+1}\left(
\begin{array}{c}
(\eta_j^*\otimes x_1^*)(\omega)\\
(\eta_j^*\otimes x_2^*)(\omega)\\
\vdots\\
(\eta_j^*\otimes x_n^*)(\omega)
\end{array}
\right)=(M^{-1}P)^T\left(
\begin{array}{c}
(x_1^*\sigma\otimes \eta_j^*)(\omega)\\
(x_2^*\sigma\otimes \eta_j^*)(\omega)\\
\vdots\\
(x_n^*\sigma\otimes \eta_j^*)(\omega)
\end{array}
\right)=(-1)^d(M^{-1}P)^T
\left(
\begin{array}{c}
\langle \eta_j^*,x_1^*\rangle\\
\langle \eta_j^*,x_2^*\rangle\\
\vdots\\
\langle \eta_j^*,x_n^*\rangle
\end{array}
\right).
$$

%Write
%$$
%\left(
%\begin{array}{c}
%c_1\\
%c_2\\
%\vdots\\
%c_n
%\end{array}
%\right)
%=
%\left(
%\begin{array}{c}
% x_1^*(\delta_r)\\
% x_2^*(\delta_r)\\
%\vdots\\
% x_n^*(\delta_r)
%\end{array}
%\right)-(-1)^d(M^{-1}P)^T
%\left(
%\begin{array}{c}
% x_1^*(\delta_l)\\
% x_2^*(\delta_l)\\
%\vdots\\
% x_n^*(\delta_l)
%\end{array}
%\right).
%$$
Write $c_i=x_i^*(\nabla_{\overline{\sigma}}\cdot\overline{\delta})$ for any $i=1,\cdots,n$, and it is easy to see that
$$
\left(
\begin{array}{c}
c_1\\
c_2\\
\vdots\\
c_n
\end{array}
\right)
=
\left(
\begin{array}{c}
 x_1^*(\delta_r)\\
 x_2^*(\delta_r)\\
\vdots\\
 x_n^*(\delta_r)
\end{array}
\right)+(M^{-1}P)^T
\left(
\begin{array}{c}
 x_1^*(\delta_l)\\
 x_2^*(\delta_l)\\
\vdots\\
 x_n^*(\delta_l)
\end{array}
\right).
$$
Then
\begin{align*}
\left(
\begin{array}{c}
\langle x_1^*,\eta_j^*\rangle\\
\langle x_2^*,\eta_j^*\rangle\\
\vdots\\
\langle x_n^*,\eta_j^*\rangle
\end{array}
\right)&=(-1)^d\left((M^{-1}P)^T
\left(
\begin{array}{c}
\langle \eta_j^*,x_1^*\rangle\\
\langle \eta_j^*,x_2^*\rangle\\
\vdots\\
\langle \eta_j^*,x_n^*\rangle
\end{array}
\right)+
\left(
\begin{array}{c}
\langle \eta_j^*,c_1\xi\rangle\\
\langle \eta_j^*,c_2\xi\rangle\\
\vdots\\
\langle \eta_j^*,c_n\xi\rangle
\end{array}
\right)
\right),
\\
\left(
\begin{array}{c}
\langle x_1^*,\omega^*\rangle\\
\langle x_2^*,\omega^*\rangle\\
\vdots\\
\langle x_n^*,\omega^*\rangle
\end{array}
\right)
&=(-1)^d\left(
\begin{array}{c}
 x_1^*(\delta_r)\\
 x_2^*(\delta_r)\\
\vdots\\
 x_n^*(\delta_r)
\end{array}
\right)=
(-1)^d\left((M^{-1}P)^T
\left(
\begin{array}{c}
\langle \omega^*,x_1^*\rangle\\
\langle \omega^*,x_2^*\rangle\\
\vdots\\
\langle \omega^*,x_n^*\rangle
\end{array}
\right)+
\left(
\begin{array}{c}
\langle \omega^*,c_1\xi\rangle\\
\langle \omega^*,c_2\xi\rangle\\
\vdots\\
\langle \omega^*,c_n\xi\rangle
\end{array}
\right)
\right).
\end{align*}
Hence,
$$
\mu_{E(B)}
\left(
\begin{array}{c}
x_1^*\\
x_2^*\\
\vdots\\
x_n
\end{array}
\right)
=(M^{-1}P)^T\left(
\begin{array}{c}
x_1^*\\
x_2^*\\
\vdots\\
x_n
\end{array}
\right)+\left(
\begin{array}{c}
c_1\xi\\
c_2\xi\\
\vdots\\
c_n\xi
\end{array}
\right).
$$
By Theorem \ref{thm: properties of Koszul regular algebras}(b), we have
\begin{align*}
&{\mu_{B}}_{\mid A}=\overline{\sigma}^{-1}\mu_A,\\
&\mu_B(z)=\hdet(\overline{\sigma})\, z+c_1x_1+c_2+\cdots+c_nx_n=\hdet(\overline{\sigma})\, z+\nabla_{\overline{\sigma}}\cdot\overline{\delta}\qedhere
\end{align*}
\end{proof}

\begin{corollary}
Let $A$ be a Koszul AS-regular algebra with the Nakayama automorphism $\mu_A$. Then a graded Ore extension $B=A[z;\overline{\sigma},\overline{\delta}]$ is Calabi-Yau if and only if $\overline{\sigma}=\mu_A$ and $\nabla_{\overline{\sigma}}\cdot\overline{\delta}=0$.
%if and only if $\overline{\sigma}=\mu_A$ and $\delta_r=-\delta_l$ for ....
\end{corollary}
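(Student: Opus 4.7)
The plan is to derive this corollary as an immediate consequence of Theorem \ref{thm: nakayama automorphism of ore extension}. Recall that, by definition, an AS-regular algebra is Calabi-Yau precisely when its Nakayama automorphism equals the identity. So the task is simply to unpack the condition $\mu_B=\id_B$ using the explicit formulas
\[
{\mu_B}_{\mid A}=\overline{\sigma}^{-1}\mu_A,\qquad \mu_B(z)=\hdet(\overline{\sigma})\,z+\nabla_{\overline{\sigma}}\cdot\overline{\delta}
\]
supplied by the theorem.

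For the forward direction, $\mu_B=\id_B$ restricted to $A$ gives $\overline{\sigma}^{-1}\mu_A=\id_A$, hence $\overline{\sigma}=\mu_A$. Applying $\mu_B=\id_B$ to $z$, I would use the decomposition $B_1=V\oplus k z$: since $\nabla_{\overline{\sigma}}\cdot\overline{\delta}\in V$ and $z\notin V$, the equality $\hdet(\overline{\sigma})\,z+\nabla_{\overline{\sigma}}\cdot\overline{\delta}=z$ in $B_1$ forces, by linear independence, both $\hdet(\overline{\sigma})=1$ and $\nabla_{\overline{\sigma}}\cdot\overline{\delta}=0$. In particular the required condition $\nabla_{\overline{\sigma}}\cdot\overline{\delta}=0$ drops out.

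For the reverse direction, assume $\overline{\sigma}=\mu_A$ and $\nabla_{\overline{\sigma}}\cdot\overline{\delta}=0$. Then ${\mu_B}_{\mid A}=\mu_A^{-1}\mu_A=\id_A$, and $\mu_B(z)=\hdet(\mu_A)\,z$. The one point that needs to be invoked here is that $\hdet(\mu_A)=1$ for any AS-regular algebra $A$ (a standard consequence of the work of Reyes-Rogalski-Zhang; equivalently, it follows from the fact that the homological determinant is dual to the action of the Nakayama automorphism of $E(A)$ on the top Ext, which must be trivial for a Frobenius algebra whose Nakayama square preserves the canonical generator appropriately). With $\hdet(\mu_A)=1$ we conclude $\mu_B(z)=z$, and combined with ${\mu_B}_{\mid A}=\id_A$ this yields $\mu_B=\id_B$, so $B$ is Calabi-Yau.

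The only potential obstacle is making sure the identity $\hdet(\mu_A)=1$ is available in this context; otherwise the corollary is an essentially formal readout of the main theorem. I would either cite \cite{RRZ1} (or the relevant lemma in the paper's reference list) or note it as a known fact about the homological determinant of Nakayama automorphisms of Koszul AS-regular algebras.
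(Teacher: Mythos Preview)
Your argument is correct and is exactly the natural unpacking the paper leaves implicit (the corollary is stated without proof). The one nontrivial input you flag, $\hdet(\mu_A)=1$, is indeed available: it is one of the homological identities established in \cite{RRZ1} for connected graded skew Calabi-Yau (equivalently AS-regular) algebras, so you may cite that reference directly.
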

%
%\begin{example}
%Let $A=k[x]$ be the only Koszul AS-regular of dimension $1$.
%
%\end{example}

\subsection{Twisted superpotentials}As shown in Theorem \ref{thm: properties of Koszul regular algebras}(c,d), a  Koszul AS-regular algebra is always associated with a twisted superpotential such that it is a derivation quotient algebra defined by such a twisted superpotential. Since graded Ore extensions of Koszul AS-regular algebras are also Koszul AS-regular algebras, it is worth to understand twisted superpotentials for graded Ore extensions. In \cite{HVZ,HVZ1}, the authors studied this problem in two special cases. In the following, we give a general solution to this problem.

\begin{theorem}\label{thm: twisted superpotential for B} Let $A=T(V)/(R)$ be a Koszul AS-regular algebra of dimension $d$ and $\omega$ a basis of $W_d$. Suppose $B=A[z;\overline{\sigma},\overline{\delta}]$ is a graded Ore extension of $A$, where $\overline{\sigma}$ is a graded automorphism of $A$ and $\overline{\delta}$ is a degree-one $\overline{\sigma}$-derivation of $A$. Let $(\{\delta_{i,r}\},\{\delta_{i,l}\})$ be a sequence pair for $\overline{\delta}$, then
\begin{align}
\hat{\omega}&=\sum_{i=0}^d (-1)^i\tau_{d+1}^i(\id\otimes \sigma^{\otimes i}\otimes\id_V^{\otimes d-i})(z\otimes\omega)+\sum_{i=1}^d (-1)^i(\delta_{i,r}\otimes\id_V^{\otimes d-i})(\omega)\label{twsited superpotential1}\\
&=\sum_{i=0}^d (-1)^i\tau_{d+1}^i(\id\otimes \sigma^{\otimes i}\otimes\id_V^{\otimes d-i})(z\otimes\omega)+(-1)^{d+1}\sum_{i=1}^d (-1)^{i}(\sigma^{\otimes d-i}\otimes \delta_{i,l})(\omega).\label{twsited superpotential2}
\end{align}
is a ${\mu_B}_{\mid V}$-twisted superpotential, where $\mu_B$ is the Nakayama automorphism of $B$ and $\sigma=\overline{\sigma}_{\mid V}$. Moreover,
$$B\cong \mathcal{A}(\hat{\omega},d-1).$$
\end{theorem}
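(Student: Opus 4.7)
The plan is to show $\hat\omega$ is a basis of the top Koszul piece $\hat W_{d+1}$ of $B$, after which Theorem~\ref{thm: properties of Koszul regular algebras}(c,d) applied to $B$ immediately yields both the twisted superpotential property and $B \cong \mathcal{A}(\hat\omega, d-1)$. First, the equivalence of the two displayed formulas for $\hat\omega$ follows directly from Proposition~\ref{prop: relation between delta_r and delta_l}(a) applied to $\omega \in W_d$.

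The heart of the argument is to verify $\hat\omega \in \hat W_{d+1} = \bigcap_{s=0}^{d-1} (V\oplus k\{z\})^{\otimes s} \otimes \hat R \otimes (V\oplus k\{z\})^{\otimes d-s-1}$, where the relation space of $B$ is $\hat R = R \oplus k\{z\otimes v - \sigma(v)\otimes z - \delta(v) : v\in V\}$. Membership in the $s$-th slice is equivalent to $q_s(\hat\omega) = 0$, where $q_s = \id^{\otimes s} \otimes m_B \otimes \id^{\otimes d-s-1}$ multiplies positions $s{+}1, s{+}2$ into $B_2$. Setting $\alpha_i = (-1)^i \tau_{d+1}^i(\id\otimes \sigma^{\otimes i}\otimes \id_V^{\otimes d-i})(z\otimes\omega)$, the Koszul containments $\omega \in V^{\otimes a}\otimes R\otimes V^{\otimes d-a-2}$ together with $\sigma^{\otimes 2}(R) = R$ force $q_s(\alpha_i) = 0$ whenever $i\notin\{s, s+1\}$; similarly $\delta_{i,r}(W_i)\subseteq W_i\otimes V$ and the analogous Koszul containments for $W_i$ force $q_s((\delta_{i,r}\otimes \id_V^{\otimes d-i})(\omega)) = 0$ for $i\notin\{s,s+1\}$. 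The surviving piece $q_s(\alpha_s+\alpha_{s+1})$ collapses through the Ore relation $zv - \sigma(v)z = \overline\delta(v)$ in $B_2$ to $(-1)^s(\sigma^{\otimes s}\otimes \overline\delta\otimes \id_V^{\otimes d-s-1})(\omega)$, and combining this with the remaining two corrections at $i = s, s+1$ via identity~(\ref{condition for delta_r}) at $i = s+1$ (tensored with $\id_V^{\otimes d-s-1}$ on the right) produces the exact cancellation; the edge case $s = 0$ works using $\delta_{0,r} = 0$.

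Non-vanishing of $\hat\omega$ is immediate from the $z$-multidegree decomposition of $(V\oplus k\{z\})^{\otimes d+1}$: the summand $z\otimes \omega$ coming from $\alpha_0$ is the unique piece with $z$ in position $1$ and none elsewhere, and $\omega \neq 0$. Since $\dim \hat W_{d+1} = 1$ (because $B$ is Koszul AS-regular of dimension $d+1$), $\hat\omega$ is a basis of $\hat W_{d+1}$, and the theorem follows from Theorem~\ref{thm: properties of Koszul regular algebras}(c,d) applied to $B$.

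The main obstacle is the bookkeeping in the vanishing of $q_s(\hat\omega)$: for each pair $(i,s)$ one must correctly identify whether positions $s{+}1, s{+}2$ of the relevant summand form a $\sigma^{\otimes 2}$-image of an $R$-pair, an Ore pattern $(z,v)$ or $(\sigma(v),z)$, a pair involving the fresh $V$-factor introduced by $\delta_{i,r}$, or positions of $\omega$ displaced by an earlier $z$-insertion, and then apply the matching Koszul containment. Identity~(\ref{condition for delta_r}) is designed precisely to match the two boundary corrections at $i = s, s+1$ against the Ore-relation residue, and the second formula for $\hat\omega$ combined with~(\ref{eq: relation between delta_l and delta_r}) gives a symmetric verification through the $\delta_{i,l}$.
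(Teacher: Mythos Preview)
Your argument is correct and takes a genuinely different route from the paper's. The paper first verifies the twisted superpotential identity $\tau_{d+1}^{d}(\mu_B\otimes\id^{\otimes d})(\hat\omega)=(-1)^d\hat\omega$ by a direct computation that uses the explicit description of $\mu_B$ from Theorem~\ref{thm: nakayama automorphism of ore extension}; having that identity in hand, it checks only the single slice $\hat\omega\in\hat R\otimes\hat V^{\otimes d-1}$ and then \emph{rotates} via $(\mu_B^{-1}\otimes\id^{\otimes d})(\tau_{d+1}^d)^{-1}$ to deduce membership in the remaining slices $\hat V^{\otimes s}\otimes\hat R\otimes\hat V^{\otimes d-1-s}$. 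You instead establish $\hat\omega\in\hat W_{d+1}$ directly by checking every slice through the maps $q_s$ and the recursion~(\ref{condition for delta_r}), and then read off both the twisted superpotential property and the derivation quotient description from Theorem~\ref{thm: properties of Koszul regular algebras}(c,d) applied to $B$. Your approach is more self-contained: it never invokes the explicit formula for $\mu_B$, so Theorem~\ref{thm: twisted superpotential for B} becomes logically independent of Theorem~\ref{thm: nakayama automorphism of ore extension}. The trade-off is that you must run the cancellation for all $s$, whereas the paper leverages the already-proved Nakayama automorphism to do this work once and propagate it by symmetry.
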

\begin{proof}
By Proposition \ref{prop: relation between delta_r and delta_l}(a), one obtains that (\ref{twsited superpotential1}) and (\ref{twsited superpotential2}) are equal. By Theorem \ref{thm: properties of Koszul regular algebras}(c), $$\tau_{d}^{d-1}(\mu_A\otimes\id_V^{\otimes d})(\omega)=(-1)^{d-1}\omega.$$
It remains to show  $\tau_{d+1}^{d}(\mu_B\otimes\id_V^{\otimes d})(\hat{\omega})=(-1)^d\hat{\omega}$. By Theorem \ref{thm: nakayama automorphism of ore extension}, we have
\begin{align*}
\tau_{d+1}^{d}(\mu_B\otimes\id_V^{\otimes d})(\hat{\omega})&=\tau_{d+1}^{d}(\mu_B\otimes\id_V^{\otimes d})\left(\sum_{i=0}^d (-1)^i\tau_{d+1}^i(\id\otimes \sigma^{\otimes i}\otimes\id_V^{\otimes d-i})(z\otimes\omega)-\sum_{i=1}^d (-1)^{d-i}(\sigma^{\otimes d-i}\otimes \delta_{i,l})(\omega)\right)\\
&=\omega\otimes\mu_B(z)+\sum_{i=1}^d (-1)^{i}\tau_{d+1}^{d}\tau_{d+1}^i(\id\otimes\mu_A\otimes \sigma^{\otimes i-1}\otimes\id_V^{\otimes d-i})(z\otimes\omega)\\
&\quad -\sum_{i=1}^{d-1} (-1)^{d-i}\tau_{d+1}^{d}(\mu_A\otimes\sigma^{\otimes d-i-1}\otimes \delta_{i,l})(\omega)-\tau_{d+1}^{d}(\mu_B\otimes\id_V^{\otimes d})\delta_{d,l}(\omega)\\
&=\omega\otimes\hdet(\overline{\sigma})\, z+\omega\otimes\nabla_{\overline{\sigma}}\cdot\overline{\delta}\\
&\quad+\sum_{i=1}^d (-1)^{i}\tau_{d+1}^{i-1}(\id\otimes\sigma^{\otimes i-1}\otimes\id_V^{\otimes d-i+1})\left(z\otimes\left(\tau_d^{d-1}(\mu_A\otimes\id_V^{\otimes d-1})(\omega)\right)\right)\\
&\quad -\sum_{i=1}^{d-1} (-1)^{d-i}(\sigma^{\otimes d-i-1}\otimes \delta_{i,l}\otimes\id_V)\tau_{d}^{d-1}(\mu_A\otimes\id_V^{\otimes d-1})(\omega)-\omega\otimes\mu_A\sigma^{-1}(\delta_l)\\
&=\omega\otimes\hdet(\overline{\sigma})\, z+\omega\otimes\delta_r+\sum_{i=1}^d (-1)^{d+i-1}\tau_{d+1}^{i-1}(\id\otimes\sigma^{\otimes i-1}\otimes\id_V^{\otimes d-i+1})(z\otimes\omega)\\
&\quad-\sum_{i=1}^{d-1} (-1)^{i+1}(\sigma^{\otimes d-i-1}\otimes \delta_{i,l}\otimes\id_V)(\omega)\\
&=\tau_{d+1}^{d}(\id\otimes \sigma^{\otimes d})(z\otimes \omega)+\delta_{d,r}(\omega)+\sum_{i=0}^{d-1} (-1)^{d+i}\tau_{d+1}^{i}(\id\otimes\sigma^{\otimes i}\otimes\id_V^{\otimes d-i})(z\otimes\omega)\\
&\quad+(-1)^{d}\sum_{i=1}^{d-1}(-1)^i(\delta_{i,r}\otimes\id_V^{\otimes d-i})(\omega)\\
&=(-1)^d\left(\sum_{i=0}^{d} (-1)^{i}\tau_{d+1}^{i}(\id\otimes\sigma^{\otimes i}\otimes\id_V^{\otimes d-i})(z\otimes\omega)+\sum_{i=1}^{d}(-1)^i(\delta_{i,r}\otimes\id_V^{\otimes d-i})      \right)(\omega)\\
&=(-1)^d\hat{\omega},
\end{align*}
where the third equation holds by \cite[Theorem 1.2]{MS} and Proposition \ref{prop: relation between delta_r and delta_l}(b).

Let $\delta:V\to V\otimes V$ be a linear map such that the map $\overline{\delta}$ and  the sequence pair $(\{\delta_{i,r}\},\{\delta_{i,l}\})$ are   induced by $\delta$, and $\{x_1,\cdots,x_n\}$ a basis of $V$. Write
$$
\hat{V}=V\oplus k\{z\},\qquad  \hat{R}=R\oplus k\{z\otimes x_i-\sigma(x_i)\otimes z-\delta(x_i)\mid i=1,\cdots,n\}.
$$
Then $B\cong T(\hat{V})/(\hat{R})$ is a Koszul regular algebra of dimension $d+1$. Write
$$
\hat{W}_1=\hat{V},\qquad  \hat{W}_i=\bigcap_{0\leq s\leq i-2} \hat{V}^{\otimes s}\otimes \hat{R}\otimes \hat{V}^{\otimes i-s-2},\quad \forall i\geq 2.
$$
Clearly, $W_i\subseteq \hat{W_i}$ for any $i\geq 1$ and $\dim \hat{W}_{d+1}=1$.

It is easy to know that $\hat{\omega}\neq 0$. Since $B$ is Koszul AS-regular, it suffices to prove $\hat{\omega}\in \hat{W}_{d+1}$ by Theorem \ref{thm: properties of Koszul regular algebras}(d). Write $\omega=\sum v_1\otimes v_2\otimes \cdots\otimes v_d$. By (\ref{twsited superpotential1}), one obtains that
\begin{align*}
\hat{\omega}=&\sum \left(z\otimes v_1-\sigma(v_1)\otimes z-\delta(v_1)\right)\otimes v_2\otimes \cdots\otimes v_d\\
&+
\sum_{i=2}^d (-1)^i(\sigma(v_1)\otimes \sigma(v_2)\otimes \cdots\otimes  \sigma(v_i)\otimes z\otimes v_{i+1}\otimes\cdots\otimes v_d)
+\sum_{i=2}^d (-1)^i(\delta_{i,r}\otimes\id_V^{\otimes d-i})(\omega)\in \hat{R}\otimes \hat{V}^{\otimes d-1}.
\end{align*}
Since $\underbrace{\left((\tau_{d+1}^d)^{-1}\circ\cdots(\tau_{d+1}^d)^{-1}\right)}_i(\hat{R}\otimes \hat{V}^{\otimes d-1})\subseteq \hat{V}^{\otimes i}\otimes \hat{R}\otimes \hat{V}^{\otimes d-1-i}$ and $\hat{\omega}$ is a ${\mu_B}_{\mid V}$-twisted superpotential, we have
$$
\hat{\omega}=(-1)^{di}\underbrace{\left((\mu_B^{-1}\otimes \id_V^{\otimes d})(\tau_{d}^{d+1})^{-1}\cdots (\mu_B^{-1}\otimes \id_V^{\otimes d})(\tau_{d}^{d+1})^{-1}\right)}_i(\hat{\omega})\in \hat{V}^{\otimes i}\otimes \hat{R}\otimes \hat{V}^{\otimes d-1-i},
$$
for any $1\leq i\leq d-1$. It implies that $\hat{\omega}\in\hat{W}_{d+1}$.
%\begin{align*}
%&\sum_{i=1}^d (-1)^i(\delta_{i,r}\otimes\id_V^{\otimes d-i})+\sum_{i=1}^d (-1)^{d-i}(\sigma^{\otimes d-i}\otimes \delta_{i,l})\\
%=&(-1)^d(\delta_{d,r}+(-1)^d\delta_{d,l})+\sum_{i=1}^{d-1}\left((-1)^i\delta_{i,r}\otimes\id_V^{\otimes d-i}+(-1)^{d-i} \sigma^{\otimes d-i}\otimes \delta_{i,l} \right)\\
%=&(-1)^d\left(\sigma\otimes(\delta_{d-1,r}+(-1)^{d-1} \delta_{d-1,l})-(\delta_{d-1,r}+(-1)^{d-1}\delta_{d-1,l}
%)\otimes\id_V\right)\\
%&+\sum_{i=1}^{d-2}\left((-1)^i\delta_{i,r}\otimes\id_V^{\otimes d-i}+(-1)^{d-i} \sigma^{\otimes d-i}\otimes \delta_{i,l} \right)
%\end{align*}
\end{proof}

\begin{remark}
\begin{enumerate}
\item By Proposition \ref{prop: relation betwenn different delta_r}, the twisted superpotential $\hat{\omega}$ constructed in the last theorem is independent on the choices of sequence pairs for $\overline{\delta}$.
\item The results in \cite[Theorem 4.4]{HVZ} and \cite[Theorem 0.1(ii)]{HVZ1} are both the special case of $\delta_{i,r}=\delta_{i,l}=0$ for $i\geq 2$.
\end{enumerate}
\end{remark}

%It is a general version of the construction of twisted superpotentials for graded Ore extensions.

\section{Applications}
In this section, we apply our main result to two examples.

\subsection{Graded polynomial algebras} In this part, we assume $A=k[x_1,x_2,\cdots,x_n]$ is a graded polynomial algebra generated in degree $1$. The Nakayama automorphism of a graded Ore extension of $A$ is just a graded version of \cite[Theorem 1.1]{LM}. We use our method to  prove the differential case as an example.

\begin{theorem}\label{thm: NA of Ore ext over Polynomial}\cite[Theorem 1.1(1)]{LM} Let $A=k[x_1,x_2,\cdots,x_n]$ be a graded polynomial algebra generated in degree $1$. Then the Nakayama automorphism $\mu_B$ of the graded Ore extension $B=A[z;\overline{\delta}]$ is
\begin{align*}
{\mu_B}_{|A}=\id_A\qquad
\mu_B(z)=z+\nabla\cdot {\overline{\delta}};
\end{align*}
where $\nabla\cdot {\bar{\delta}}$ is the divergence of $\delta$, that is $\nabla\cdot {\bar{\delta}}=\sum_{i=1}^n \partial\, \overline{\delta}(x_i)/\partial x_i$.
\end{theorem}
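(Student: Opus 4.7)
My plan is to derive this theorem as an immediate consequence of Theorem \ref{thm: nakayama automorphism of ore extension}. The polynomial algebra $A=k[x_1,\ldots,x_n]$ is Koszul Artin-Schelter regular of dimension $n$ with $\mu_A = \id_A$ (it is commutative, hence Calabi-Yau) and $\hdet(\id_A) = 1$, so the main theorem immediately yields ${\mu_B}_{\mid A} = \id_A$ and $\mu_B(z) = z + \nabla_{\id_A}\cdot\overline{\delta}$. The only remaining content is the identification $\nabla_{\id_A}\cdot\overline{\delta} = \nabla\cdot\overline{\delta}$.

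For this identification, I would fix a lift $\delta:V\to V\otimes V$ of $\overline{\delta}_{\mid V}$ with $\delta(x_i) = \sum_{j,k} c^i_{jk}\, x_j\otimes x_k$, giving $\overline{\delta}(x_i) = \sum_{j,k} c^i_{jk}\, x_j x_k$ and the target formula $\nabla\cdot\overline{\delta} = \sum_{i,j}(c^i_{ij}+c^i_{ji})\,x_j$ via the product rule. Take $\omega = \sum_{\pi\in S_n}\mathrm{sgn}(\pi)\, x_{\pi(1)}\otimes\cdots\otimes x_{\pi(n)}$ as a basis of the one-dimensional space $W_n$ (the totally antisymmetric tensors), and observe that $W_{n+1}=0$ since $\dim V = n$. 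By Corollary \ref{coro: relation between delta_r and delta l} together with Proposition \ref{prop: relation betwenn different delta_r} applied at $d=n$, the elements $\delta_r,\delta_l\in V$ defined by $\delta_{n,r}(\omega)=\omega\otimes\delta_r$ and $\delta_{n,l}(\omega)=\delta_l\otimes\omega$ are uniquely determined independently of the intermediate choices made in constructing the sequence pair, so one may pick the most convenient sequence pair.

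The core computation is then to pick $\delta_{i,r}$ inductively via the recursion of Lemma \ref{lemma: construction of delta_r}, with $\sigma=\id$ simplifying the recursion to the cocycle condition $(\id^{\otimes i-1}\otimes m_B)\delta_{i,r} = (\id^{\otimes i-1}\otimes m_B)(\id^{\otimes i-1}\otimes\delta + \delta_{i-1,r}\otimes\id)$. Sorting the contributions to $\delta_{n,r}(\omega)$ by slot position and exploiting antisymmetry of $\omega$, I expect the closed form $\delta_r = \sum_{i,j} c^i_{ij}\,x_j$; the mirror computation for $\delta_{n,l}$ via Lemma \ref{lemma: construction of delta_l} yields $\delta_l = \sum_{i,j} c^i_{ji}\,x_j$. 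Summing gives
\[
\nabla_{\id_A}\cdot\overline{\delta} = \delta_r + \mu_A\overline{\sigma}^{-1}(\delta_l) = \delta_r + \delta_l = \sum_{i,j}(c^i_{ij}+c^i_{ji})\,x_j = \nabla\cdot\overline{\delta},
\]
completing the proof. A sanity check in dimensions $n=1$ and $n=2$ confirms both formulas (for $n=1$, $\delta_{1,r}=\delta_{1,l}=\delta$ gives $\delta_r = \delta_l = c^1_{11}x_1$ directly, matching $\nabla\cdot\overline{\delta} = 2c^1_{11}x_1$).

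The principal obstacle is the combinatorial verification in the last step: although the recursion only pins down each $\delta_{i,r}$ modulo the kernel of $\id^{\otimes i-1}\otimes m_B$ (a kernel controlled by $W_{i+1}$), Proposition \ref{prop: relation betwenn different delta_r} guarantees the top-degree scalar is well-defined, yet isolating the explicit coefficient $\sum_{i,j} c^i_{ij}x_j$ requires careful sign bookkeeping through the antisymmetrizer and managing many cross-cancellations between slots where $\delta$ acts. An alternative route that may shorten the bookkeeping is to invoke Theorem \ref{thm: twisted superpotential for B}: the twisted superpotential $\hat{\omega}$ for $B$ directly encodes ${\mu_B}_{\mid V}$ through its twist, and one could attempt to read off $\mu_B(z)$ from an explicit expansion of $\hat{\omega}$ in the polynomial case rather than computing the full sequence pair.
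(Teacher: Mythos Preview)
Your approach is essentially identical to the paper's: both reduce to Theorem~\ref{thm: nakayama automorphism of ore extension}, take the antisymmetrizer $r_{1\cdots n}$ as the basis of $W_n$, construct an explicit sequence pair (the paper does this in Lemma~\ref{lemma: delta_l delta_r for polynomial}, giving closed formulas for all $\delta_{m,r},\delta_{m,l}$), and read off $\delta_r=\sum_{s,t}k^{(s)}_{st}x_t$, $\delta_l=\sum_{s,t}k^{(t)}_{st}x_s$ to conclude $\delta_r+\delta_l=\nabla\cdot\overline{\delta}$. One small correction: Corollary~\ref{coro: relation between delta_r and delta l} only asserts invariance of the sum $\delta_r+\mu_A\sigma^{-1}(\delta_l)$, not of $\delta_r$ and $\delta_l$ separately, though this does not affect your argument.
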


To prove this theorem, we need some preparation. Let $V$ be the vector space spanned by $\{x_1,x_2,\cdots,x_n\}$. Firstly, we determine the vector spaces $\{W_i\mid i\geq2\}$ for $A$. Write $
r_{i_1i_2}=x_{i_1}\otimes x_{i_2}-x_{i_2}\otimes x_{i_1},
$
for any (not necessarily distinguished) $i_1,i_2\in\{1,2,\cdots,n\}$. For any integer $m\geq3$, we write inductively,
%$$
%r_{i_1i_2\cdots i_m}=\sum_{\substack{\sigma\in S_m\\ \sigma(1)<\sigma(2)<\cdots<\sigma(m-1)}}(-1)^{
%\mathrm{sgn}\sigma}r_{i_{\sigma(1)}i_{\sigma(2)}\cdots i_{\sigma(m-1)}}x_{i_{\sigma(m)}},
%$$
$$
r_{i_1i_2\cdots i_m}=\sum_{j=1}^m (-1)^{m-j}r_{i_1\cdots \hat{i}_j \cdots i_{m}}\otimes x_{i_j}\in V^{\otimes m},
$$
for any (not necessarily distinguished) $i_1,i_2,\cdots,i_m\in \{1,2,\cdots,n\}$. The following result is clear.
\begin{lemma}\label{lemma: properties of r_i1i2...} Let an integer $m\geq 2$, (not necessarily distinguished) $i_1,i_2,\cdots,i_m\in \{1,2,\cdots,n\}$. Then
\begin{enumerate}
\item $r_{i_1i_2\cdots i_m}=\sum_{j=1}^m (-1)^{j+1}x_{i_j}\otimes r_{i_1\cdots \hat{i}_j \cdots i_{m}}$.
\item $r_{i_1\cdots i_m}\in W_m$.

\item $r_{i_1\cdots i_m}=0$, if $i_s=i_t$ for some $s\neq t$.

\item $r_{i_1\cdots i_m}=(-1)^{\mathrm{sgn}\sigma}r_{i_{\sigma(1)}\cdots i_{\sigma(m)}}$ for any $\sigma\in S_m$.

\item the set $\{r_{i_1\cdots i_m}\;|\; i_1<i_2<\cdots<i_m\}$ is a basis of $W_m$.
\end{enumerate}
\end{lemma}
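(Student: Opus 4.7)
The strategy is to identify $r_{i_1\cdots i_m}$ with an antisymmetrized tensor, from which all the claims follow with little additional work. I would first prove by induction on $m$ the closed form
\[
r_{i_1\cdots i_m}=\sum_{\sigma\in S_m}(-1)^{\mathrm{sgn}\,\sigma}\,x_{i_{\sigma(1)}}\otimes x_{i_{\sigma(2)}}\otimes\cdots\otimes x_{i_{\sigma(m)}}.
\]
The base case $m=2$ is immediate from the definition of $r_{i_1i_2}$. For the inductive step, one groups the sum over $S_m$ by the value $\sigma(m)$: the resulting sign $(-1)^{m-j}$ matches the number of transpositions needed to move $i_j$ to the end of $(i_1,\dots,i_m)$, and the remaining factor is exactly $r_{i_1\cdots\hat{i}_j\cdots i_m}$ by the inductive hypothesis, reproducing the defining recursion.

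With the closed form in hand, parts (a), (c), (d) follow quickly. Part (a) is the parallel grouping of the antisymmetrization by which index appears in the first slot: pulling $x_{i_j}$ to the front costs $(-1)^{j-1}=(-1)^{j+1}$, and the remaining tensor on $m-1$ slots is $r_{i_1\cdots\hat{i}_j\cdots i_m}$. For (c), if $i_s=i_t$ then pre-composing $\sigma$ with the transposition $(s\;t)$ is a fixed-point-free sign-reversing involution of $S_m$ that preserves each summand (since $x_{i_s}=x_{i_t}$), so the sum collapses in cancelling pairs. Part (d) is the standard $\operatorname{sgn}$-equivariance of the antisymmetrization under permuting the input indices.

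For part (b) I would argue by induction on $m$, using both the definition and part (a). Assuming $r_{j_1\cdots j_{m-1}}\in W_{m-1}$ for every sequence of length $m-1$, the defining formula displays $r_{i_1\cdots i_m}$ as an element of $W_{m-1}\otimes V$, hence it lies in $V^{\otimes s}\otimes R\otimes V^{\otimes m-s-2}$ for every $0\le s\le m-3$; symmetrically, formula (a) displays the same element as a member of $V\otimes W_{m-1}$, which covers $1\le s\le m-2$. The two ranges together exhaust $0\le s\le m-2$, so $r_{i_1\cdots i_m}\in W_m$.

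Finally for (e), parts (c) and (d) show that every $r_{j_1\cdots j_m}$ is, up to sign, either zero or equal to some $r_{i_1\cdots i_m}$ with $i_1<\cdots<i_m$, so the indicated set spans the subspace of $W_m$ generated by all the $r$'s. Linear independence is immediate from the closed form, since the coefficient of the monomial $x_{i_1}\otimes\cdots\otimes x_{i_m}$ in $r_{j_1\cdots j_m}$ is nonzero only when $\{j_1,\dots,j_m\}=\{i_1,\dots,i_m\}$. Combining with the known dimension $\dim W_m=\binom{n}{m}$ coming from the Koszul complex of $k[x_1,\dots,x_n]$ shows the spanning set must in fact be a basis of the whole $W_m$. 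The only delicate point is the sign bookkeeping in the inductive proof of the closed form and its translation into (a); everything else is then routine.
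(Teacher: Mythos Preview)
Your argument is correct in every part. The identification of $r_{i_1\cdots i_m}$ with the antisymmetrized tensor $\sum_{\sigma\in S_m}(-1)^{\mathrm{sgn}\,\sigma}x_{i_{\sigma(1)}}\otimes\cdots\otimes x_{i_{\sigma(m)}}$ is exactly the right viewpoint, and your sign bookkeeping in the inductive step (decomposing $\sigma$ as a cycle $(m\ m{-}1\ \cdots\ j)$ of sign $(-1)^{m-j}$ followed by a permutation of the remaining indices) checks out; the argument for (b) via $W_{m-1}\otimes V$ and $V\otimes W_{m-1}$ is clean, and the dimension count $\dim W_m=\binom{n}{m}$ for the polynomial ring is standard.

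There is nothing to compare: the paper offers no proof of this lemma, stating only that the result is clear. Your closed-form approach is precisely the standard identification the authors have in mind, which is why they regard it as routine.
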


Let $\delta:V\to V\otimes V$ be a linear map such that the map $\overline{\delta}$ in Theorem \ref{thm: NA of Ore ext over Polynomial} is induced by it. Write
$$
\delta(x_i)=\sum_{s,t=1}^n k_{st}^{(i)} x_s\otimes x_t,
$$
where $k_{st}^{(i)}\in k$ for $s,t,i=1,\cdots,n$. Then we construct a sequence pair for $\overline{\delta}$.

\begin{lemma}\label{lemma: delta_l delta_r for polynomial} There exists a sequence pair $(\{\delta_{i,r}\},\{\delta_{i,l}\})$ for $\overline{\delta}$  such that
\begin{align*}
\delta_{m,r}(r_{i_1i_2\cdots i_m})=\sum_{j=1}^m\sum_{s,t=1}^n k^{(i_{j})}_{st} r_{i_1\cdots i_{j-1}si_{j+1}\cdots i_m} \otimes x_t,\quad
\delta_{m,l}(r_{i_1i_2\cdots i_m})=\sum_{j=1}^m\sum_{s,t=1}^n k^{(i_{j})}_{st} x_s\otimes r_{i_1\cdots i_{j-1}ti_{j+1}\cdots i_m},
\end{align*}
for any $1\leq i_1<i_2<\cdots<i_m\leq n$ and $m\geq 2$.
\end{lemma}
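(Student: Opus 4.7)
The plan is to prove both formulas by induction on $m \geq 2$. Since $A$ is commutative and $\sigma = \id$, we are in the simplest situation of the general theory, and it will suffice---by Remark \ref{remark: choice of delta_r} and the uniqueness clause in Lemma \ref{lemma: construction of delta_l}---to (i) verify that the proposed $\delta_{m,r}$ lands in $W_m \otimes V$ and satisfies the recurrence (\ref{condition for delta_r}), and then (ii) check that the proposed $\delta_{m,l}$ lands in $V \otimes W_m$ and satisfies the relation (\ref{eq: relation between delta_l and delta_r}) together with this $\delta_{m,r}$. Membership in $W_m \otimes V$ (respectively $V \otimes W_m$) is immediate from Lemma \ref{lemma: properties of r_i1i2...}(b) applied to each summand $r_{i_1 \cdots s \cdots i_m}$ (respectively $r_{i_1 \cdots t \cdots i_m}$), where the terms with $s$ (or $t$) coinciding with some $i_k$ simply vanish by Lemma \ref{lemma: properties of r_i1i2...}(c).

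First I would dispatch the base case $m = 2$ by direct expansion. Writing $\delta(x_i) = \sum_{s,t} k^{(i)}_{st} x_s \otimes x_t$ and using that $\delta$ is a derivation of $T(V)$, I expand
\[
\delta(r_{i_1 i_2}) = \delta(x_{i_1}) \otimes x_{i_2} + x_{i_1} \otimes \delta(x_{i_2}) - \delta(x_{i_2}) \otimes x_{i_1} - x_{i_2} \otimes \delta(x_{i_1}).
\]
Regrouping the eight resulting monomials so that their leftmost two factors form an element of $R$ yields exactly $\sum_{s,t}\bigl(k^{(i_1)}_{st} r_{s i_2} + k^{(i_2)}_{st} r_{i_1 s}\bigr) \otimes x_t$, which is the proposed $\delta_{2,r}(r_{i_1 i_2}) \in R \otimes V$; the symmetric regrouping along the rightmost two factors gives the proposed $\delta_{2,l}(r_{i_1 i_2}) \in V \otimes R$, and the two pieces recombine into $\delta(r_{i_1 i_2})$.

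For the inductive step I would use the recursive definition $r_{i_1 \cdots i_m} = \sum_{j=1}^{m} (-1)^{m-j} r_{i_1 \cdots \hat{i}_j \cdots i_m} \otimes x_{i_j}$ together with the induction hypothesis applied to $\delta_{m-1,r}(r_{i_1 \cdots \hat{i}_j \cdots i_m})$ to expand the right-hand side of (\ref{condition for delta_r}). After applying $\id_V^{\otimes m-1} \otimes m_B$, the double sum $\sum_{j}\sum_{k \neq j}$ produced collapses, thanks to the commutativity of $A$ and the antisymmetry of $r_{i_1 \cdots i_m}$ in its indices (Lemma \ref{lemma: properties of r_i1i2...}(d)), into a single alternating sum that one recognises as $(\id_V^{\otimes m-1} \otimes m_B)$ applied to the proposed $\delta_{m,r}(r_{i_1 \cdots i_m})$ via the same right-end expansion. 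For the $\delta_{m,l}$ half I would either rerun the argument using the left expansion $r_{i_1 \cdots i_m} = \sum_j (-1)^{j+1} x_{i_j} \otimes r_{i_1 \cdots \hat{i}_j \cdots i_m}$ from Lemma \ref{lemma: properties of r_i1i2...}(a), or---shorter---check directly that the two proposed formulas satisfy (\ref{eq: relation between delta_l and delta_r}), after which the uniqueness in Lemma \ref{lemma: construction of delta_l} identifies the proposed $\delta_{m,l}$ as the correct left partner of $\delta_{m,r}$.

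The main obstacle will be the combinatorial bookkeeping: one has to show that the mixed ``cross terms'' produced when $\delta$ hits the last tensor factor while $\delta_{m-1,r}$ is simultaneously deployed on the first $m-1$ factors all cancel after projecting through $m_B$. The mechanism for these cancellations is precisely the antisymmetry in Lemma \ref{lemma: properties of r_i1i2...}(c,d) together with commutativity $x_s x_t = x_t x_s$ in $A$; once these two tools are in hand, the alternating signs built into the recursive definition of $r_{i_1 \cdots i_m}$ force only the diagonal terms (those with $k = j$) to survive, yielding exactly the proposed formula.
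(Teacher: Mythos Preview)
Your plan is exactly the paper's: induction on $m$, base case $m=2$ by direct expansion of $\delta(r_{i_1i_2})$, inductive step by verifying (\ref{condition for delta_r}) via the right-end recursion for $r_{i_1\cdots i_m}$ and the induction hypothesis, then invoking Remark \ref{remark: choice of delta_r} for $\delta_{m,r}$ and Lemma \ref{lemma: construction of delta_l} (with Lemma \ref{lemma: properties of r_i1i2...}(a)) for $\delta_{m,l}$.

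One correction to your description of the inductive mechanism: the cross terms do \emph{not} cancel, and it is not the case that ``only the diagonal terms survive.'' What actually happens is that the contribution of $\id_V^{\otimes m-1}\otimes\delta$ supplies precisely the missing $p=j$ summand $(-1)^{m-j}\,r_{i_1\cdots\hat{i}_j\cdots i_m}\otimes x_s\otimes x_t$, while $\delta_{m-1,r}\otimes\id_V$ supplies the $p\neq j$ summands $(-1)^{m-p}\,r_{i_1\cdots s\cdots\hat{i}_p\cdots i_m}\otimes x_t\otimes x_{i_p}$; after using commutativity of $A$ to swap $x_t$ and $x_{i_p}$ in the latter, the two pieces \emph{reassemble} into the right-end recursive expansion of $r_{i_1\cdots i_{j-1}\,s\,i_{j+1}\cdots i_m}\otimes x_t$, which is exactly $(\id_V^{\otimes m-1}\otimes m_B)$ applied to the proposed $\delta_{m,r}$. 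So the bookkeeping is a recombination, not a cancellation, and antisymmetry (Lemma \ref{lemma: properties of r_i1i2...}(d)) plays no role at this step.
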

\begin{proof} By the extension of $\delta$ to $T(V)$, one obtains that for any $1\leq i_1<i_2\leq n$,
\begin{align*}
\delta(r_{i_1i_2})&=\delta(x_{i_1}\otimes x_{i_2}-x_{i_2}\otimes x_{i_1})=x_{i_1}\otimes \delta(x_{i_2})+\delta(x_{i_1})\otimes x_{i_2}-x_{i_2}\otimes \delta(x_{i_1})-\delta(x_{i_2})\otimes x_{i_1}\\
&= \sum_{s,t=1}^n k_{st}^{(i_2)}( x_{i_1}\otimes x_s\otimes x_t- x_s\otimes x_t\otimes x_{i_1})+ \sum_{s,t=1}^n k_{st}^{(i_1)}(x_s\otimes x_t\otimes x_{i_2}-x_{i_2}\otimes  x_s\otimes x_t)\\
&=\sum_{s,t=1}^n \left( k_{st}^{(i_2)} \left(x_{i_1}\otimes x_s\otimes x_t-x_s\otimes x_{i_1}\otimes x_t\right)+k_{st}^{(i_2)} \left(x_s\otimes x_{i_1}\otimes x_t- x_s\otimes x_t\otimes x_{i_1}\right)\right)\\
&\quad+\sum_{s,t=1}^n \left(k_{st}^{(i_1)}\left(x_s\otimes x_t\otimes x_{i_2}-x_s\otimes x_{i_2}\otimes x_t\right)+k_{st}^{(i_1)}\left(x_s\otimes x_{i_2}\otimes x_t-x_{i_2}\otimes  x_s\otimes x_t\right)\right)\\
%&=\sum_{s,t=1}^m \left( k_{st}^{(i_2)}r_{i_1s}x_t +k_{st}^{(i_2)} x_sr_{i_1t}\right)+\sum_{s,t=1}^m \left(k_{st}^{(i_1)}x_sr_{ti_2}+k_{st}^{(i_1)}r_{si_2}x_t\right)\\
&=\sum_{s,t=1}^n x_s\otimes \left( k_{st}^{(i_1)}r_{ti_2}+k_{st}^{(i_2)} r_{i_1t}\right)+\sum_{s,t=1}^n \left(k_{st}^{(i_1)}r_{si_2}+k_{st}^{(i_2)}r_{i_1s} \right)\otimes x_t.
\end{align*}
So we can choose
$$
\delta_{2,r}(r_{i_1i_2})=\sum_{s,t=1}^n \left(k_{st}^{(i_1)}r_{si_2}+k_{st}^{(i_2)}r_{i_1s} \right)\otimes x_t,\qquad \delta_{2,l}(r_{i_1i_2})=\sum_{s,t=1}^n x_s\otimes \left( k_{st}^{(i_1)}r_{ti_2}++k_{st}^{(i_2)} r_{i_1t}\right).
$$

Suppose we have obtained that for any $u<m$,
$$
\delta_{u,r}(r_{i_1i_2\cdots i_u})=\sum_{j=1}^u\sum_{s,t=1}^n k^{(i_{j})}_{st} r_{i_1\cdots i_{j-1}si_{j+1}\cdots i_u} \otimes x_t,\qquad
\delta_{u,l}(r_{i_1i_2\cdots i_u})=\sum_{j=1}^u\sum_{s,t=1}^n k^{(i_{j})}_{st} x_s\otimes r_{i_1\cdots i_{j-1}ti_{j+1}\cdots i_u}.
$$
For any $1\leq i_1<i_2<\cdots<i_m\leq n$, one obtains
\begin{align*}
& (\id_V^{\otimes m-1}\otimes m_B)\left(\id_V^{\otimes m-1}\otimes\delta+\delta_{m-1,r}\otimes \id_V\right)(r_{i_1\cdots i_m})
\\
%=&(\id_V^{\otimes m-1}\otimes m_B)\left(\id_V^{\otimes m-1}\otimes\delta+\delta_{m-1,r}\otimes \id_V\right)\left(\sum_{p=1}^m (-1)^{m-p}r_{i_1\cdots \hat{i}_p \cdots i_{m}}x_{i_p}\right),\\
%=&\sum_{p=1}^m\sum_{s,t=1}^{n}(-1)^{m-p}(\id_V^{\otimes m-1}\otimes m_B)
%\left( k^{(i_p)}_{st}
%r_{i_1\cdots \hat{i}_p \cdots i_{m}}\otimes x_s\otimes x_t+\sum_{j=1}^{p-1} k_{st}^{(i_j)}r_{i_1\cdots i_{j-1}s\cdots \hat{i}_p \cdots i_{m}}\otimes x_t\otimes x_{i_p}\right.\\
%&\left.+\sum_{j=p+1}^{m} k_{st}^{(i_j)}r_{i_1\cdots \hat{i}_p\cdots si_{j+1}\cdots i_{m}}\otimes x_t\otimes x_{i_p}\right)\\
=&\sum_{p=1}^m\sum_{s,t=1}^{n}(-1)^{m-p}(\id_V^{\otimes m-1}\otimes m_B)
\left( k^{(i_p)}_{st}
r_{i_1\cdots \hat{i}_p \cdots i_{m}}\otimes x_s\otimes x_t\right)\\
&+\sum_{p=1}^m\sum_{s,t=1}^{n}(-1)^{m-p}(\id_V^{\otimes m-1}\otimes m_B)\left(\sum_{j=1}^{p-1} k_{st}^{(i_j)}r_{i_1\cdots i_{j-1}s\cdots \hat{i}_p \cdots i_{m}}\otimes x_t\otimes x_{i_p}+\sum_{j=p+1}^{m} k_{st}^{(i_j)}r_{i_1\cdots \hat{i}_p\cdots si_{j+1}\cdots i_{m}}\otimes x_t\otimes x_{i_p}\right)\\
%=&\sum_{s,t=1}^{n}\sum_{p=1}^m(\id_V^{\otimes m-1}\otimes m_B)
%\left((-1)^{m-p} k^{(i_p)}_{st}
%r_{i_1\cdots \hat{i}_p \cdots i_{m}}x_sx_t
%\right)\\
%&+\sum_{s,t=1}^{n}
%\sum_{q=1}^{m} (\id_V^{\otimes m-1}\otimes m_B)\left( \sum_{j=1}^{q-1}(-1)^{m-j}k_{st}^{(i_q)}r_{i_1\cdots \hat{i}_jsi_{q+1}\cdots i_m}x_tx_{i_j}+\sum_{j=q+1}^m (-1)^{m-j}k_{st}^{(i_q)}r_{i_1\cdots i_{q-1}s\cdots \hat{i}_j\cdots i_m}x_tx_{i_j}
%\right)\\
=&\sum_{s,t=1}^{n}\sum_{j=1}^m(\id_V^{\otimes m-1}\otimes m_B)
\left((-1)^{m-j} k^{(i_j)}_{st}
r_{i_1\cdots \hat{i}_j \cdots i_{m}}\otimes x_s\otimes x_t
\right)\\
&+\sum_{s,t=1}^{n}
\sum_{j=1}^{m} (\id_V^{\otimes m-1}\otimes m_B)\left( \sum_{p=1}^{j-1}(-1)^{m-p}k_{st}^{(i_j)}r_{i_1\cdots \hat{i}_p\cdots si_{j+1}\cdots i_m}\otimes x_{i_p}\otimes x_t+\sum_{p=j+1}^m (-1)^{m-p}k_{st}^{(i_j)}r_{i_1\cdots i_{j-1}s\cdots \hat{i}_p\cdots i_m}\otimes x_{i_p}\otimes x_t
\right)\\
=&\sum_{s,t=1}^{n}\sum_{j=1}^m(\id_V^{\otimes m-1}\otimes m_B)
k^{(i_j)}_{st}r_{i_1\cdots i_{j-1}si_{j+1}\cdots i_m}\otimes x_t
\end{align*}
By Remark \ref{remark: choice of delta_r}, we can define
$$
\delta_{u,r}(r_{i_1i_2\cdots i_m})=\sum_{j=1}^m\sum_{s,t=1}^n k^{(i_{j})}_{st} r_{i_1\cdots i_{j-1}si_{j+1}\cdots i_m}\otimes x_t.
$$

Similarly, one obtains the result for $\delta_{i,l}$ by Lemma \ref{lemma: construction of delta_l} and Lemma \ref{lemma: properties of r_i1i2...}(a).
\end{proof}

\begin{proof}[Proof of Theorem \ref{thm: NA of Ore ext over Polynomial}]

By Lemma \ref{lemma: properties of r_i1i2...}, $r_{1\cdots n}$ is the basis of $W_n$. By Lemma \ref{lemma: properties of r_i1i2...} and Lemma \ref{lemma: delta_l delta_r for polynomial}, one obtains
$$
\delta_{n,r}(r_{1\cdots n})=r_{1\cdots n}\otimes \left(\sum^n_{s,t=1}k^{(s)}_{st} x_t\right),\qquad \delta_{n,l}(r_{1\cdots n})=\left(\sum^n_{s,t=1}k^{(t)}_{st} x_s\right)\otimes r_{1\cdots n}.
$$

So $\delta_r=\sum^n_{s,t=1}k^{(s)}_{st} x_t$ and $\delta_l=\sum^n_{s,t=1}k^{(t)}_{st} x_s$, and
$$
\delta_r+\delta_l=\sum^n_{s=1}\left( \sum^n_{t=1} (k^{(s)}_{st}+k^{(s)}_{ts})  x_t\right)
=\sum^n_{s=1} \dfrac{\partial(\delta(x_s))}{\partial x_s}=\nabla\cdot \overline{\delta}.
$$
Since $A$ is Koszul CY, the result follows by Theorem \ref{thm: nakayama automorphism of ore extension}.
\end{proof}

\subsection{Koszul AS-regular algebras of dimension 2}

In this subsection, we assume $\mathrm{char}\, k=0$. We give a formula of Nakayama automorphisms for graded Ore extensions of Koszul AS-regular algebras of dimension 2, and then compute specific Nakayama automorphisms for noetherian ones.

Now we assume $A$ is an AS-regular algebra of dimension 2. By \cite[Theorem 0.1]{Z}, one obtains that $A$ is always Koszul and there is an invertible matrix $Q\in M_n(k)$ such that
$
A\cong k\langle x_1,x_2,\cdots,x_n\rangle/(r),
$
where
$$
r=\mathbf{x}^TQ\mathbf{x},%\ left(\begin{array}{c}x_1\\x_2\\\vdots\\x_n\end{array}\right).
$$
and $\mathbf{x}=(x_1,x_2,\cdots,x_n)^T$.
It is well known (for example, \cite[Scetion 3]{HVZ}) that the Nakayama automorphism of $\mu_A$ satisfies
$$
\mu_A(\mathbf{x})
%\left(\begin{array}{c}x_1\\x_2\\\vdots\\x_n\end{array}\right)
=-(Q^{-1})^TQ\mathbf{x}.
%\left(\begin{array}{c}x_1\\x_2\\\vdots\\x_n\end{array}\right).
$$

Let $\overline{\sigma}$ be a graded automorphism of $A$ and $\overline{\delta}$ a degree-one $\overline{\sigma}$-derivation of $A$. Write $M\in M_n(k)$ for the invertible matrix such that $\overline{\sigma}(\mathbf{x})=M\mathbf{x}$.  Choose a linear map $\delta:V\to V\otimes V$ such that $\overline{\delta}$ can be induced by it, where $V$ is the vector space spanned by $\{x_1,\cdots,x_n\}$. Since $A$ is 2-dimensional, there is a unique pair $(\delta_r,\delta_l)$ of elements in $V$ such that
$$
\delta(r)=r\otimes \delta_r+\delta_l\otimes r.
$$
That is, there are two certain elements $c_r=(c_{r1},c_{r2},\cdots,c_{rn}),c_l=(c_{l1},c_{l2},\cdots,c_{ln})\in k^n$ such that
$$
\delta_r=c_r\mathbf{x},\qquad \delta_l=c_l\mathbf{x}.
$$

By Theorem \ref{thm: nakayama automorphism of ore extension}, the Nakayama automorphism $\mu_B$ of the graded Ore extension $B=A[z;\overline{\sigma},\overline{\delta}]$ satisfies
%$$
%{\mu_B}_{\mid A}=\overline{\sigma}^{-1}\mu_A,\qquad \mu_B(z)=\hdet\overline{\sigma}+\nabla_{\overline{\sigma}}\cdot\overline{\delta}.
%$$
\begin{equation}\label{nakayama of comm}
{\mu_B}\left(\begin{array}{c}\mathbf{x}
%x_1\\\vdots\\x_n
\\z\end{array}\right)
=\left(
\begin{array}{cc}
-M^{-1}(Q^T)^{-1}Q &0\\
c_r- c_l M^{-1}(Q^T)^{-1}Q    &\hdet(\overline{\sigma})
\end{array}
\right)
\left(\begin{array}{c}\mathbf{x}
%x_1\\\vdots\\x_n
\\z\end{array}\right).
\end{equation}

Now we focus on noetherian ones. There is an interesting result about CY property for noetherian cases.
\begin{theorem}\label{thm: CY for 2-dim}
Let $A=k\langle x_1,x_2\rangle/(r)$ be a noetherian AS-regular algebra of dimension 2 and $B=A[z;\overline{\sigma},\overline{\delta}]$ is a graded Ore extension. Write $\mu_A$ for the Nakayama automorphism of $A$.
\begin{enumerate}
\item  Suppose $A$ is commutative, then $B$ is CY if and only if $\overline{\sigma}=\id_A$ and
    $$
    \overline{\delta}(x_1)=l_1x_1^2-2l_4x_2x_1+l_2x_2^2,\qquad
    \overline{\delta}(x_2)=l_3x_1^2-2l_1x_2x_1+l_4x_2^2,
    $$
for some $l_1,l_2,l_3,l_4\in k$.
\item  Suppose $A$ is noncommutative, then $B$ is CY if and only if $\overline{\sigma}=\mu_A$.
\end{enumerate}
\end{theorem}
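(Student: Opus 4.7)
The plan is to use formula~(\ref{nakayama of comm}) together with the Calabi-Yau corollary following Theorem~\ref{thm: nakayama automorphism of ore extension} to turn the condition $\mu_B = \id_B$ into numerical relations on the data $(M, Q, c_r, c_l, \hdet(\overline{\sigma}))$. Reading off the matrix entries, $\mu_B = \id_B$ is equivalent to (i)~$-M^{-1}(Q^T)^{-1}Q = I$, (ii)~$\hdet(\overline{\sigma}) = 1$, and (iii)~$c_r - c_l M^{-1}(Q^T)^{-1}Q = 0$. Condition (i) is precisely $\overline{\sigma} = \mu_A$. I would first check that $\hdet(\mu_A) = 1$ for every $2$-dimensional AS-regular algebra: from $P = -(Q^T)^{-1}Q$ one computes $P^T Q P = Q$, so $\mu_A^{\otimes 2}$ fixes the relation $r$ and acts as the identity on $W_2 = kr$. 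Under the assumption $M = -(Q^T)^{-1}Q$, a short matrix calculation yields $M^{-1}(Q^T)^{-1}Q = -I$, so condition (iii) collapses to $c_r + c_l = 0$, equivalently $\delta_r + \delta_l = 0$, i.e.\ $\nabla_{\mu_A}\cdot\overline{\delta} = 0$.

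Part~(a) is then essentially immediate. When $A = k[x_1, x_2]$, one has $\mu_A = \id_A$ and, by Theorem~\ref{thm: NA of Ore ext over Polynomial}, the $\id_A$-divergence of $\overline{\delta}$ is the usual divergence $\partial \overline{\delta}(x_1)/\partial x_1 + \partial \overline{\delta}(x_2)/\partial x_2$. Writing $\overline{\delta}(x_1)$ and $\overline{\delta}(x_2)$ as arbitrary homogeneous elements of $A_2$ and forcing the divergence to vanish yields two independent linear relations on the six coefficients; the resulting four-parameter family of solutions, relabeled as $l_1, l_2, l_3, l_4$, is precisely the form displayed in the theorem.

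For part~(b), the noncommutative noetherian AS-regular algebras of dimension $2$ on two generators are, up to isomorphism, the quantum planes $A_q = k\langle x_1, x_2\rangle / (x_1 x_2 - q x_2 x_1)$ with $q \in k^\times$ and $q \neq 1$, and the Jordan plane $A_J = k\langle x_1, x_2\rangle / (x_1 x_2 - x_2 x_1 - x_2^2)$. For each I would compute $\mu_A$ via $P = -(Q^T)^{-1}Q$; classify all $\mu_A$-derivations by expanding $\overline{\delta}(x_1), \overline{\delta}(x_2)$ in a PBW basis of $A_2$ and imposing $\overline{\delta}(r) = 0$ in $A$; choose a lift $\delta : V \to V \otimes V$; extend $\delta$ to a $\mu_A$-derivation of $T(V)$ and compute $\delta(r) \in V^{\otimes 3}$; and, using $W_3 = 0$, extract $\delta_r, \delta_l$ from $\delta(r) = r \otimes \delta_r + \delta_l \otimes r$. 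The key observation to verify is that, in each case, $\delta_l = -\delta_r$ holds identically in the parameters of the family of $\mu_A$-derivations, so that the second Calabi-Yau condition is automatic; combined with the first paragraph this gives the equivalence claimed in~(b).

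The main obstacle is the bookkeeping in the Jordan plane, where $\mu_A(x_1) = x_1 + 2x_2$ is unipotent and mixes terms when $\delta$ is extended across the relation. I would handle it by working in the ordered PBW basis $\{x_1^a x_2^b\}$ of $A$ (using $x_2 x_1 = x_1 x_2 - x_2^2$), first solving the linear system $\overline{\delta}(r) = 0$ for the coefficients of $\overline{\delta}(x_1), \overline{\delta}(x_2)$, and then separating the contributions to $r \otimes x_i$ versus $x_i \otimes r$ in $\delta(r)$. The quantum plane is a direct and cleaner version of the same calculation (since $\mu_A$ is diagonal) and serves as a sanity check. In both cases the final identity $\delta_r + \delta_l = 0$ is a routine linear verification, and no parameters are eliminated, confirming that $\overline{\sigma} = \mu_A$ is the sole Calabi-Yau condition.
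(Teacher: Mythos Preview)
Your proposal is correct and follows essentially the same route as the paper: both reduce the CY condition via Theorem~\ref{thm: nakayama automorphism of ore extension} (equivalently, formula~(\ref{nakayama of comm})) to $\overline{\sigma}=\mu_A$ together with $\delta_r+\delta_l=0$, and then verify the latter case by case for the quantum and Jordan planes by solving the linear system coming from $\delta(r)\in kr\otimes V+V\otimes kr$. The only differences are cosmetic: you classify just the $\mu_A$-derivations, whereas the paper first tabulates all $\overline{\sigma}$-derivations (Solutions~\ref{solution: commutative}--\ref{solution: Jordan}) and specializes at the end; and your uniform check $P^TQP=Q\Rightarrow\hdet(\mu_A)=1$ replaces the paper's reliance on the explicit $\hdet$ formulas.
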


To prove this result, we determine all graded Ore extensions of noetherian Koszul AS-regular algebras of dimension $2$ and compute their Nakayama automorphisms. Let $A$ be a noetherian Koszul AS-regular algebra of dimension $2$. Then $A=k\langle x_1,x_2\rangle/(r)$, and there are only two classes of $A$ up to isomorphism, that is,
$$
Q=\left(
\begin{array}{cc}
0 & 1\\
-q  &  0
\end{array}
\right),\qquad\text{or}\qquad
Q=\left(
\begin{array}{cc}
0 & 1\\
-1  &  -1
\end{array}
\right),
$$
where $q$ is a nonzero element in $k$. Let $V$ be the vector space spanned by $\{x_1,x_2\}$.

In the following, $\overline{\sigma}$ is a graded automorphism of $A$, $M=(m_{ij})\in M_2(k)$ is the invertible matrix such that $\overline{\sigma}(x_1,x_2)^T=M(x_1,x_2)^T$. Write $\sigma=\overline{\sigma}_{\mid V}$ and $\sigma_T=\oplus_{i\geq0}\sigma^{\otimes i}$. Let $\delta$ be a linear map from $V$ to $V\otimes V$. Then any degree-one $\overline{\sigma}$-derivation $\overline{\delta}$ of $A$ can be induced by $\delta$, in case $\delta$ extends to a degree-one  $\sigma_T$-derivation of $T(V)(\cong k\langle x_1,x_2\rangle)$ such that
\begin{equation}\label{condtion for comm}
\delta(r)\in r\otimes V+V\otimes r.
\end{equation}
Since the forms of $r$ (or $Q$), we assume without loss of generality,
$$
\delta(x_i)=\gamma_{i1}x_1^2+\gamma_{i2}x_2x_1+\gamma_{i3}x_2^2,
$$
where $\gamma_{ij}\in k$ for $j=1,2,3,i=1,2.$

\subsubsection{Case (i): commutative polynomial} In this case, $r=x_1x_2-x_2x_1$, or equivalently
$$Q=\left(
\begin{array}{cc}
0 & 1\\
-1  &  0
\end{array}
\right),$$
$M=(m_{ij})$ is an arbitrary invertible matrix, and $\hdet(\overline{\sigma})=\det (M)$ (We refer \cite{JZ,SZL} for the computation of homological determinant). We have
%Let $\delta$ be a linear map from $V$ to $V\otimes V$. Then any $\overline{\sigma}$-derivation $\overline{\delta}$ of $A$ can be induced by $\delta$, in case $\delta$ extends to a degree-one  $\sigma_T$-derivation of $T(V)$ such that
%\begin{equation}
%\delta(r)\in r\otimes V+V\otimes r,
%\end{equation}
%where $r=x_1x_2-x_2x_1$. Without loss of generality, we assume $$
%\delta(x_i)=\gamma_{i1}x_1^2+\gamma_{i2}x_2x_1+\gamma_{i3}x_2^2,
%$$
%where $\gamma_{ij}\in k$ for $j=1,2,3,i=1,2.$ Then
\begin{align*}
\delta(r)=&(m_{11}\gamma_{21}-m_{21}\gamma_{11}-\gamma_{21})x_1^3+(m_{12}\gamma_{23}-m_{22}\gamma_{13}+\gamma_{13})x_2^3+(m_{11}\gamma_{22}-m_{21}\gamma_{12})x_1x_2x_1+\gamma_{12}x_2x_1x_2\\
&+\gamma_{11}x_1^2x_2+(m_{11}\gamma_{23}-m_{21}\gamma_{13})x_1x_2^2+(m_{12}\gamma_{21}-m_{22}\gamma_{11}-\gamma_{22})x_2x_1^2+(m_{12}\gamma_{22}-m_{22}\gamma_{12}-\gamma_{23})x_2^2x_1.
\end{align*}

By a straightforward computation, one obtains that the following result.
\begin{lemma}
The condition (\ref{condtion for comm}) is equivalent to the following equations hold
\begin{equation}\label{equation for comm}
\begin{aligned}
&m_{21}\gamma_{11}+(1-m_{11})\gamma_{21}=0,\\
&(1-m_{22})\gamma_{13}+m_{12}\gamma_{23}=0,\\
&(m_{22}-1)\gamma_{11}+m_{21}\gamma_{12}-m_{12}\gamma_{21}+(1-m_{11})\gamma_{22}=0,\\
&(m_{22}-1)\gamma_{12}+m_{21}\gamma_{13}-m_{12}\gamma_{22}+(1-m_{11})\gamma_{23}=0.
\end{aligned}
\end{equation}
In this case, $\delta(r)=r\otimes \delta_r+\delta_l\otimes r$, where
\begin{equation*}\label{delta_l,r for comm}
\delta_{r}=(m_{22}\gamma_{11}-m_{12}\gamma_{21}+\gamma_{22})x_1+(m_{11}\gamma_{23}-m_{21}\gamma_{13})x_2,\quad
\delta_{l}=\gamma_{11}x_1+(m_{22}\gamma_{12}-m_{12}\gamma_{22}+\gamma_{23})x_2.
\end{equation*}
\end{lemma}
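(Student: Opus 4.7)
The plan is to verify both assertions of the lemma by direct coefficient comparison in the $8$-dimensional monomial basis of $V^{\otimes 3}$. Since $r = x_1\otimes x_2 - x_2\otimes x_1$, the four elements $r\otimes x_1$, $r\otimes x_2$, $x_1\otimes r$, $x_2\otimes r$ are linearly independent, so $r\otimes V + V\otimes r$ is a $4$-dimensional subspace of $V^{\otimes 3}$ and membership in it will impose exactly four linear conditions on $\delta(r)$.

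The key structural observation I would exploit is that when these four generators are expanded in the standard basis, each of the four monomials $x_1^2 x_2$, $x_2 x_1^2$, $x_1 x_2^2$, $x_2^2 x_1$ appears in exactly one generator, with coefficient $\pm 1$. Consequently, for any $\xi\in V^{\otimes 3}$ there is a unique candidate decomposition $\xi = \alpha\,(r\otimes x_1) + \beta\,(r\otimes x_2) + \gamma\,(x_1\otimes r) + \epsilon\,(x_2\otimes r)$ whose four scalars are read directly off these four ``corner'' coefficients of $\xi$; moreover, $\xi$ lies in $r\otimes V + V\otimes r$ if and only if the remaining four coefficients of $\xi$, on $x_1^3$, $x_2^3$, $x_1 x_2 x_1$, $x_2 x_1 x_2$, agree with the values $0$, $0$, $\alpha-\gamma$, $\epsilon-\beta$ forced by that ansatz.

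I would then apply this criterion to $\xi = \delta(r)$ using the explicit expansion of $\delta(r)$ already written out just above the lemma in the excerpt. The four corner coefficients immediately identify $\gamma,\alpha,\beta,\epsilon$; the vanishing of the $x_1^3$ and $x_2^3$ coefficients of $\delta(r)$ then produces the first two equations of \eqref{equation for comm}, and equating the $x_1 x_2 x_1$ and $x_2 x_1 x_2$ coefficients with $\alpha-\gamma$ and $\epsilon-\beta$ respectively yields the last two equations after routine rearrangement. Once \eqref{equation for comm} is known, the ansatz reads $\delta(r) = r\otimes(\alpha x_1 + \beta x_2) + (\gamma x_1 + \epsilon x_2)\otimes r$, so substituting the explicit values of $\alpha,\beta,\gamma,\epsilon$ reproduces the stated formulas for $\delta_r$ and $\delta_l$. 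The argument is purely mechanical linear algebra; the only place that requires care is bookkeeping of the signs produced by $r\otimes x_i$ versus $x_i\otimes r$, since it is precisely these signs combined with the $\pm 1$ shifts from the diagonal of $M$ that yield the factors $(1-m_{11})$ and $(1-m_{22})$ in \eqref{equation for comm}.
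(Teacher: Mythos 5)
Your proposal is correct and is essentially the computation the paper itself invokes (the paper simply says ``by a straightforward computation'' and relies on the explicit expansion of $\delta(r)$ displayed just before the lemma). Your observation that the four corner monomials $x_1^2x_2$, $x_2x_1^2$, $x_1x_2^2$, $x_2^2x_1$ each occur in exactly one of the generators $r\otimes x_1$, $r\otimes x_2$, $x_1\otimes r$, $x_2\otimes r$ is a clean way to organize that coefficient comparison, and it correctly reproduces all four equations of \eqref{equation for comm} as well as the stated formulas for $\delta_r$ and $\delta_l$.
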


By (\ref{nakayama of comm}), the Nakayama automorphisms of graded Ore extension $B=A[z;\overline{\sigma},\overline{\delta}]$ of $A$ by (\ref{nakayama of comm}), where $\overline{\delta}$ is induced by $\delta$, satisfies
\begin{equation}\label{naka equa for comm}
\begin{aligned}
&\mu_B(x_1)=\det(M)^{-1}(m_{22}x_1-m_{12}x_2),\quad \mu_B(x_2)=\det(M)^{-1}(-m_{21}x_1+m_{11}x_2),\\ &\mu_B(z)=\det(M)z+(x_1,x_2)\upsilon^T,
    \end{aligned}
\end{equation}
where $\upsilon=\left(m_{22}\gamma_{11}-m_{12}\gamma_{21}+\gamma_{22},m_{11}\gamma_{23}-m_{21}\gamma_{13}\right)
+\left(\gamma_{11},m_{22}\gamma_{12}-m_{12}\gamma_{22}+\gamma_{23}\right)M^{-1}$. Then we list all solutions of equations (\ref{equation for comm}).

%\delta\left(
%\begin{array}{c}
%x_1\\
%x_2
%\end{array}
%\right)
%=\left(
%\begin{array}{ccc}
%x_1\\
%x_2
%\end{array}
%\right)
%\left(
%\begin{array}{c}
%x_1^2\\
%x_2x_1\\
%x_2x
%\end{array}
%\right)
%$$
\begin{solution}\label{solution: commutative}
All soloutions of  equations (\ref{equation for comm}) are as follows.
\begin{enumerate}
\item If $M$ is the identity matrix $E_2$, that is $m_{11}=m_{22}=1$ and $m_{12}=m_{21}=0$, then each $\gamma_{ij}$ is free for $i=1,2,j=1,2,3$;% and
%    \begin{align*}
%    \mu_B(x_1)=x_1,\quad \mu_B(x_2)=x_2,\quad \mu_B(z)=z+(2\gamma_{11}+\gamma_{22})x_1+(\gamma_{12}+2\gamma_{23})x_2.
%    \end{align*}
%It is a special case of Theorem \ref{thm: NA of Ore ext over Polynomial}.
\item If $m_{21}=0$, $m_{22}=1$ and $M\neq E_2$, then  $
    \gamma_{21}=\gamma_{22}=\gamma_{23}=0$ and $ \gamma_{11},\gamma_{12},\gamma_{13}$  are free variables; % and
%    $$\qquad
%    \mu_B(x_1)=m_{11}^{-1}x_1-m_{11}^{-1}m_{12}x_2,\quad \mu_B(x_2)=x_2,\quad \mu_B(z)=m_{11}z+(1+m_{11}^{-1})\gamma_{11}x_1-(m_{11}^{-1}m_{12}\gamma_{11}-\gamma_{12})x_2.
%    $$

\item If $m_{21}=0,m_{22}\neq 1,m_{11}=1$, then
 $\gamma_{11}=(m_{22}-1)^{-1}m_{12}\gamma_{21},
    \gamma_{12}=(m_{22}-1)^{-1}m_{12}\gamma_{22},
    \gamma_{13}=(m_{22}-1)^{-1}m_{12}\gamma_{23};$
 %$\gamma_{11}=\dfrac{m_{12}}{m_{22}-1}\gamma_{21},
  %  \gamma_{12}=\dfrac{m_{12}}{m_{22}-1}\gamma_{22},
  %  \gamma_{13}=\dfrac{m_{12}}{m_{22}-1}\gamma_{23};$
 %   \begin{align*}
  %  &\gamma_{11}=\frac{m_{12}}{m_{22}-1}\gamma_{21},\quad
   % \gamma_{12}=\frac{m_{12}}{m_{22}-1}\gamma_{22},\quad
    %\gamma_{13}=\frac{m_{12}}{m_{22}-1}\gamma_{23};%\\
   % &\mu_B(x_1)=x_1-m_{12}m_{22}^{-1}x_2,\quad \mu_B(x_2)=m_{22}^{-1}x_2,\\ &\mu_B(z)=m_{22}z+\left(\frac{2m_{22}}{m_{22}-1}\gamma_{21}+\gamma_{22}\right)x_1-\left(\frac{m_{12}^2}{m_{22}(m_{22}-1)}\gamma_{21}-\frac{m_{12}}{m_{22}(m_{22}-1)}\gamma_{22}-(1+m_{22}^{-1})\gamma_{23}\right)x_2.
  %  \end{align*}
% $$
%    \gamma_{11}=\frac{m_{12}}{m_{22}-1}\gamma_{21},\qquad
%    \gamma_{12}=\frac{m_{12}}{m_{22}-1}\gamma_{22},\qquad
%    \gamma_{13}=\frac{m_{12}}{m_{22}-1}\gamma_{23},
%    $$

\item If $m_{21}=0,m_{22}\neq 1,m_{11}\neq1$, then $\gamma_{11}=(m_{22}-1)^{-1}(m_{11}-1)\gamma_{22},
    \gamma_{12}=(m_{22}-1)^{-1}(m_{12}\gamma_{22}+(m_{11}-1)\gamma_{23}),
    \gamma_{13}=(m_{22}-1)^{-1}m_{12}\gamma_{23},
    \gamma_{21}=0;$%
%    $\gamma_{11}=\dfrac{m_{11}-1}{m_{22}-1}\gamma_{22},
%    \gamma_{12}=\dfrac{m_{12}}{m_{22}-1}\gamma_{22}+\dfrac{m_{11}-1}{m_{22}-1}\gamma_{23},
%    \gamma_{13}=\dfrac{m_{12}}{m_{22}-1}\gamma_{23},
%    \gamma_{21}=0;$
%    \begin{align*}
%    &\gamma_{11}=\frac{m_{11}-1}{m_{22}-1}\gamma_{22},\quad
%    \gamma_{12}=\frac{m_{12}}{m_{22}-1}\gamma_{22}+\frac{m_{11}-1}{m_{22}-1}\gamma_{23},\quad
%    \gamma_{13}=\frac{m_{12}}{m_{22}-1}\gamma_{23},\quad
%    \gamma_{21}=0,\\
%    &\mu_B(x_1)=m_{11}^{-1}x_1-(m_{11}m_{22})^{-1}m_{12}x_2,\quad \mu_B(x_2)=m_{22}^{-1}x_2,\\ &\mu_B(z)=m_{11}m_{22}z+\frac{m_{11}m_{22}-m_{11}^{-1}}{m_{22}-1}\gamma_{22}x_1+\left(\frac{m_{12}}{m_{11}m_{22}(m_{22}-1)}\gamma_{22}+\frac{m_{11}m_{22}-m_{22}^{-1}}{m_{22}-1}\gamma_{23}\right)x_2.
%    \end{align*}

\item If $m_{12}=0$, there exit symmetric solutions of the last three ones, which we omit here;

\item If $m_{21}m_{22}\neq 0$ and $(m_{22}-1)(m_{11}-1)=m_{21}m_{12}$, then
    $
    \gamma_{11}=m_{21}^{-1}(m_{11}-1)\gamma_{21},
    \gamma_{12}=m_{21}^{-1}(m_{11}-1)\gamma_{22},
    \gamma_{23}=m_{12}^{-1}(m_{22}-1)\gamma_{13};
    $
%    $
%    \gamma_{11}=\dfrac{m_{11}-1}{m_{21}}\gamma_{21},
%    \gamma_{12}=\dfrac{m_{11}-1}{m_{21}}\gamma_{22},
%    \gamma_{23}=\dfrac{m_{22}-1}{m_{12}}\gamma_{13};
%    $
%  \begin{align*}
%    &\gamma_{11}=\frac{m_{11}-1}{m_{21}}\gamma_{21},\quad
%    \gamma_{12}=\frac{m_{11}-1}{m_{21}}\gamma_{22},\quad
%    \gamma_{23}=\frac{m_{22}-1}{m_{12}}\gamma_{13},\\
%    &\mu_B(x_1)=\det(M)^{-1}(m_{22}x_1-m_{12}x_2),\quad \mu_B(x_2)=\det(M)^{-1}(-m_{21}x_1+m_{22}x_2),\\ &\mu_B(z)=\det(M)z+(x_1,x_2)\upsilon_1^T,
%    \end{align*}
%where $\upsilon_1=\left(\frac{m_{11}-1}{m_{21}}\gamma_{21}+\gamma_{22},\frac{m_{22}-1}{m_{11}}\gamma_{13}\right)+\left(\frac{m_{11}-1}{m_{21}}\gamma_{21},\frac{m_{11}-1}{m_{21}}\gamma_{22}+\frac{m_{22}-1}{m_{12}}\gamma_{13}\right)M^{-1}$.
\item If $m_{21}m_{22}\neq 0$ and $(m_{22}-1)(m_{11}-1)\neq m_{21}m_{12}$,
then $
\gamma_{11}=m_{21}^{-1}(m_{11}-1)\gamma_{21},
    \gamma_{12}=m_{21}^{-1}m_{12}\gamma_{21}+m_{12}^{-1}(m_{11}-1)\gamma_{13},
    \gamma_{22}=m_{21}^{-1}(m_{22}-1)\gamma_{21}+m_{12}^{-1}m_{21}\gamma_{13},
    \gamma_{23}=m_{12}^{-1}(m_{22}-1)\gamma_{13}.
$
\end{enumerate}
\end{solution}

\subsubsection{Case (ii): noncommutative quantum plane
}In this case, $r=x_1x_2-qx_2x_1$, or equivalently,
$$Q=\left(
\begin{array}{cc}
0 & 1\\
-q  &  0
\end{array}
\right),$$
where  nonzero element $q\neq 1$. There are two subcases: $q=-1$ and $q\neq 1$.

\noindent (1) $q=-1$. The graded automorphisms of $A=k\langle x_1,x_2\rangle/(x_1x_2+x_2x_1)$ have two forms
$$
M=
\left(
\begin{array}{cc}
m_{11}  & 0\\
0   &m_{22}
\end{array}
\right)\qquad\text{and}\qquad
M=
\left(
\begin{array}{cc}
0  & m_{12}\\
m_{21}   &0
\end{array}
\right).
$$

\ding{172} $ M=
\left(
\begin{array}{cc}
m_{11}  & 0\\
0   &m_{22}
\end{array}
\right)$. Then $\hdet(\overline{\sigma})=m_{11}m_{22}$. Since $r=x_1x_2+x_2x_1$, then
\begin{align*}
\delta(r)=&(m_{11}+1)\gamma_{21}x_1^3+(m_{22}+1)\gamma_{13}x_2^3+m_{11}\gamma_{22}x_1x_2x_1+\gamma_{12}x_2x_1x_2\\
&+m_{11}\gamma_{23}x_1x_2^2+\gamma_{11}x_1^2x_2+(m_{22}\gamma_{11}+\gamma_{22})x_2x_1^2+(m_{22}\gamma_{12}+\gamma_{23})x_2^2x_1.
\end{align*}
The following result is easy to get.
\begin{lemma}
The condition (\ref{condtion for comm}) is equivalent to the following equations hold
\begin{equation}\label{equation for q=-1}
\begin{aligned}
&(m_{11}+1)\gamma_{21}=0,\\
&(m_{22}+1)\gamma_{13}=0,\\
&(m_{22}+1)\gamma_{11}+(1-m_{11})\gamma_{22}=0,\\
&(m_{22}-1)\gamma_{12}+(m_{11}+1)\gamma_{23}=0.
\end{aligned}
\end{equation}
In this case, $\delta(r)=r\otimes \delta_r+\delta_l\otimes r$, where
\begin{equation*}\label{delta_l,r for q=-1}
\delta_{r}=(m_{22}\gamma_{11}+\gamma_{22})x_1+m_{11}\gamma_{23}x_2,\quad
\delta_{l}=\gamma_{11}x_1+(m_{22}\gamma_{12}+\gamma_{23})x_2.
\end{equation*}
\end{lemma}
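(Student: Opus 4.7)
The plan is a direct coefficient comparison in $V^{\otimes 3}$, using the explicit expansion of $\delta(r)$ that is displayed just before the lemma. First I would observe that the subspace $r\otimes V+V\otimes r$ is spanned by the four tensors
\[
r\otimes x_1 = x_1x_2x_1+x_2x_1^2,\quad r\otimes x_2 = x_1x_2^2+x_2x_1x_2,\quad x_1\otimes r = x_1^2x_2+x_1x_2x_1,\quad x_2\otimes r = x_2x_1x_2+x_2^2x_1,
\]
and that these involve only the six monomials of $V^{\otimes 3}$ other than $x_1^3$ and $x_2^3$. Reading the coefficients of $x_1^3$ and $x_2^3$ in the formula for $\delta(r)$ therefore gives the first two necessary equations $(m_{11}+1)\gamma_{21}=0$ and $(m_{22}+1)\gamma_{13}=0$.

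Assuming those vanishings, I would then write $\delta_r=\alpha_1x_1+\alpha_2x_2$ and $\delta_l=\beta_1x_1+\beta_2x_2$ with unknown scalars, and match coefficients in the equation $\delta(r)=r\otimes\delta_r+\delta_l\otimes r$ on each of the six remaining monomials. Four of them, namely $x_1^2x_2$, $x_2x_1^2$, $x_1x_2^2$, $x_2^2x_1$, each appear in exactly one of the four spanning tensors, so they directly pin down
\[
\beta_1=\gamma_{11},\quad \alpha_1=m_{22}\gamma_{11}+\gamma_{22},\quad \alpha_2=m_{11}\gamma_{23},\quad \beta_2=m_{22}\gamma_{12}+\gamma_{23},
\]
which is precisely the claimed formula for $\delta_r$ and $\delta_l$.

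The remaining two ``overlap'' monomials, $x_1x_2x_1$ and $x_2x_1x_2$, each receive contributions from two of the spanning tensors, so matching their coefficients yields the two consistency conditions $\alpha_1+\beta_1=m_{11}\gamma_{22}$ and $\alpha_2+\beta_2=\gamma_{12}$. Substituting the already-determined values of $\alpha_i$ and $\beta_i$ turns these into $(m_{22}+1)\gamma_{11}+(1-m_{11})\gamma_{22}=0$ and $(m_{22}-1)\gamma_{12}+(m_{11}+1)\gamma_{23}=0$, which are the third and fourth equations of the lemma. Conversely, once all four equations hold, the $\alpha_i,\beta_i$ defined above satisfy every coefficient comparison, so the asserted decomposition of $\delta(r)$ is valid.

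There is essentially no obstacle beyond careful bookkeeping; the only point worth flagging is that the monomials $x_1x_2x_1$ and $x_2x_1x_2$ are exactly the ones that encode the nontrivial third and fourth equations, whereas the four ``corner'' monomials serve only to read off $\delta_r$ and $\delta_l$. Uniqueness of the decomposition $\delta(r)=r\otimes\delta_r+\delta_l\otimes r$ is automatic from $R\otimes V\cap V\otimes R=W_3=0$, which holds for every AS-regular algebra of dimension $2$, so the formulas for $\delta_r$ and $\delta_l$ in the statement are unambiguous.
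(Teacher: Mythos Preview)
Your argument is correct and is exactly the direct coefficient comparison the paper has in mind; the paper gives no explicit proof for this lemma, simply stating that the result is easy to obtain from the displayed expansion of $\delta(r)$. Your write-up makes that computation precise and nothing more is needed.
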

By (\ref{nakayama of comm}), we have the Nakayama automorphism of a graded Ore extension $B=A[z;\overline{\sigma},\overline{\delta}]$ satisfies that
 \begin{equation}\label{naka equa for q=-1}
 \begin{aligned}
    &\mu_B(x_1)=-m_{11}^{-1}x_1,\quad \mu_B(x_2)=-m_{22}^{-1}x_2,\\ &\mu_B(z)=m_{11}m_{22}z+((m_{22}-m_{11}^{-1})\gamma_{11}+\gamma_{22})x_1+((m_{11}-m_{22}^{-1})\gamma_{23}-\gamma_{12})x_2.
    \end{aligned}
 \end{equation}
To be explicit, we give all solutions of (\ref{equation for q=-1}).
\begin{solution}\label{solution: q=-1}
The solutions of (\ref{equation for q=-1}) are as follows.
\begin{enumerate}
\item If $m_{11}=m_{22}=-1$, then $\gamma_{12}=\gamma_{22}=0$ and the other variables are free;%, and
%     \begin{align*}
%    &\mu_B(x_1)=x_1,\quad \mu_B(x_2)=x_2,\quad \mu_B(z)=z.
%    \end{align*}

\item If $m_{11}=1,m_{22}=-1$, then $\gamma_{21}=0,\gamma_{23}=\gamma_{12}$ and  $\gamma_{11},\gamma_{13},\gamma_{22}$ are free;%, and
%     \begin{align*}
%    &\mu_B(x_1)=-x_1,\quad \mu_B(x_2)=x_2,\quad \mu_B(z)=-z+(\gamma_{22}-2\gamma_{11})x_1+\gamma_{12}x_2.
%    \end{align*}

\item If $m_{11}\neq \pm1,m_{22}=-1$, then $\gamma_{21}=\gamma_{22}=0$, $\gamma_{23}=2(m_{11}+1)^{-1}\gamma_{12}$ and $\gamma_{11},\gamma_{13}$ are free;%, and
%     \begin{align*}
%    &\mu_B(x_1)=-m_{11}^{-1}x_1,\quad \mu_B(x_2)=x_2,\quad \mu_B(z)=-m_{11}z-(1+m_{11}^{-1})\gamma_{11}x_1+\gamma_{12}x_2.
%    \end{align*}

\item If $m_{11}=-1,m_{22}=1$, then $\gamma_{13}=0,\gamma_{11}=-\gamma_{22}$ and  $\gamma_{12},\gamma_{21},\gamma_{23}$ are free;%, and
   %  \begin{align*}
%    &\mu_B(x_1)=x_1,\quad \mu_B(x_2)=-x_2,\quad \mu_B(z)=-z-\gamma_{22}x_1-(2\gamma_{23}+\gamma_{12})x_2.
%    \end{align*}

%\item If $m_{11}=1,m_{22}=1$, then $\gamma_{11}=\gamma_{13}=\gamma_{21}=\gamma_{23}=0$,  and the other variables are free, and
%     \begin{align*}
%    &\mu_B(x_1)=-x_1,\quad \mu_B(x_2)=-x_2,\quad \mu_B(z)=z+\gamma_{22}x_1-\gamma_{12}x_2.
%    \end{align*}
%
%\item If $m_{11}\neq-1,m_{22}=1$, then $\gamma_{13}=\gamma_{21}=\gamma_{23}=0,\gamma_{11}=\dfrac{m_{11}-1}{2}\gamma_{22}$,  and the other variables are free, and
%     \begin{align*}
%    &\mu_B(x_1)=-m_{11}^{-1}x_1,\quad \mu_B(x_2)=-x_2,\quad \mu_B(z)=m_{11}z+\frac{m_{11}+m_{11}^{-1}}{2}\gamma_{22}x_1-\gamma_{12}x_2.
%    \end{align*}
\item  If $m_{11}=-1,m_{22}\neq \pm 1$, then $\gamma_{12}=\gamma_{13}=0,\gamma_{11}=-2(m_{22}+1)^{-1}\gamma_{22}$ and $\gamma_{21},\gamma_{23}$ are free;%, and
%     \begin{align*}
%    &\mu_B(x_1)=x_1,\quad \mu_B(x_2)=-m_{22}^{-1}x_2,\quad \mu_B(z)=-m_{22}z-\gamma_{22}x_1-(1+m_{22}^{-1})\gamma_{23}x_2.
%    \end{align*}

    \item  If $m_{11}\neq-1,m_{22}\neq- 1$, then $\gamma_{13}=\gamma_{21}=0,\gamma_{11}=(m_{11}-1)(m_{22}+1)^{-1}\gamma_{22}, \gamma_{23}=(m_{11}+1)^{-1}(1-m_{22})\gamma_{12}$.  %and
%     \begin{align*}
%    &\mu_B(x_1)=-m_{11}^{-1}x_1,\quad \mu_B(x_2)=-m_{22}^{-1}x_2,\\ &\mu_B(z)=m_{11}m_{22}z+(m_{11}m_{22}+m_{11}^{-1})(m_{22}+1)^{-1}\gamma_{22}x_1-(m_{11}m_{22}+m_{22}^{-1})(m_{11}+1)^{-1}\gamma_{12}x_2.
%    \end{align*}

\end{enumerate}
\end{solution}

\ding{173} $M=
\left(
\begin{array}{cc}
0  & m_{12}\\
m_{21}   &0
\end{array}
\right).$  Then $\hdet(\overline{\sigma})=m_{12}m_{21}$. Similarly, one obtains that
\begin{align*}
\delta(r)=&(m_{21}\gamma_{11}+\gamma_{21})x_1^3+(m_{12}\gamma_{23}+\gamma_{13})x_2^3+m_{12}\gamma_{12}x_1x_2x_1+\gamma_{12}x_2x_1x_2\\
&+m_{21}\gamma_{13}x_1x_2^2+\gamma_{11}x_1^2x_2+(m_{12}\gamma_{21}+\gamma_{22})x_2x_1^2+(m_{12}\gamma_{22}+\gamma_{23})x_2^2x_1.
\end{align*}
\begin{lemma}
The condition (\ref{condtion for comm}) is equivalent to the following equations hold
\begin{equation}\label{equation for q=-1 II}
\begin{aligned}
&m_{21}\gamma_{11}+\gamma_{21}=0,\\
&m_{12}\gamma_{23}+\gamma_{13}=0,\\
&\gamma_{11}-m_{21}\gamma_{12}+m_{12}\gamma_{21}+\gamma_{22}=0,\\
&-\gamma_{12}+m_{21}\gamma_{13}+m_{12}\gamma_{22}+\gamma_{23}=0.
\end{aligned}
\end{equation}
In this case, $\delta(r)=r\otimes \delta_r+\delta_l\otimes r$, where
\begin{equation*}\label{delta_l,r for qneq-1}
\delta_{r}=(m_{12}\gamma_{21}+\gamma_{22})x_1+m_{21}\gamma_{13}x_2,\quad
\delta_{l}=\gamma_{11}x_1+(m_{12}\gamma_{22}+\gamma_{23})x_2.
\end{equation*}
\end{lemma}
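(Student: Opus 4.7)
The plan is to compute $\delta(r)$ explicitly in the monomial basis of $V^{\otimes 3}$ and then compare it against the parametric form of an element of $r \otimes V + V \otimes r$. Since $\sigma(x_1) = m_{12} x_2$ and $\sigma(x_2) = m_{21} x_1$ in this subcase, and $\delta$ extends to a $\sigma_T$-derivation of $T(V)$ via the Leibniz rule $\delta(ab) = \sigma(a)\delta(b) + \delta(a) b$, applying this rule to $r = x_1 x_2 + x_2 x_1$ produces $\delta(r)$ as the explicit linear combination of the eight basis monomials $x_1^3,\ x_1^2 x_2,\ x_1 x_2 x_1,\ x_2 x_1^2,\ x_1 x_2^2,\ x_2 x_1 x_2,\ x_2^2 x_1,\ x_2^3$ displayed just before the lemma.

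Next, I would parametrize a generic element of $r \otimes V + V \otimes r$ as $r \otimes (\alpha x_1 + \beta x_2) + (\gamma x_1 + \delta x_2) \otimes r$ for unknowns $\alpha, \beta, \gamma, \delta \in k$ and expand it in the same monomial basis. The resulting expression has zero coefficient on $x_1^3$ and on $x_2^3$, so the membership condition forces the first two equations of the lemma (vanishing of those coefficients of $\delta(r)$). The four monomials $x_2 x_1^2$, $x_1^2 x_2$, $x_1 x_2^2$, $x_2^2 x_1$ each occur only once in the parametric expansion, and equating their coefficients with those of $\delta(r)$ uniquely determines $\alpha, \gamma, \beta, \delta$ respectively; the remaining two monomials $x_1 x_2 x_1$ and $x_2 x_1 x_2$ occur with coefficients $\alpha + \gamma$ and $\beta + \delta$, and equating these with the corresponding coefficients of $\delta(r)$ and substituting the already-determined values produces exactly the last two equations of the system.

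Conversely, once the four equations hold, the values $\delta_r = (m_{12}\gamma_{21} + \gamma_{22}) x_1 + m_{21}\gamma_{13} x_2$ and $\delta_l = \gamma_{11} x_1 + (m_{12}\gamma_{22} + \gamma_{23}) x_2$ extracted from the matching reproduce $\delta(r) = r \otimes \delta_r + \delta_l \otimes r$ term by term, yielding both directions of the equivalence as well as the claimed closed-form expressions for $\delta_r$ and $\delta_l$. The entire argument is pure linear algebra on the eight-dimensional space $V^{\otimes 3}$; the only real obstacle is careful bookkeeping in the Leibniz expansion of $\delta(r)$, in particular tracking where $\sigma$ enters each summand, since only the left factors in the $\sigma$-derivation rule are twisted and this asymmetry is what ultimately distinguishes the formulas for $\delta_r$ and $\delta_l$.
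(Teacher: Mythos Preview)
Your proposal is correct and follows exactly the approach the paper intends: the paper displays the explicit expansion of $\delta(r)$ immediately before the lemma and then states the lemma without proof, leaving the coefficient comparison you describe as a routine verification. One small notational point: you use $\delta$ as one of your four parametrizing scalars, which clashes with the derivation $\delta$; rename it (e.g.\ $\delta'$) to avoid confusion when writing this out.
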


%The solutions to equations (\ref{equation for q=-1 II}) and the Nakayama automorphism of a graded Ore extension $B=A[z;\overline{\sigma},\overline{\delta}]$ are as follows.
\begin{solution}\label{solution: q=-1 II}
The solutions to equations (\ref{equation for q=-1 II}) are as follows.
\begin{enumerate}
\item If $m_{12}m_{21}=1$, then
$
\gamma_{11}=-m_{12}\gamma_{21},\gamma_{12}=m_{12}\gamma_{22},
\gamma_{13}=-m_{12}\gamma_{23};
$
\item If $m_{12}m_{21}\neq1$, then $
\gamma_{11}=-m_{21}^{-1}\gamma_{21},\gamma_{12}=m_{12}m_{21}^{-1}\gamma_{21}+\gamma_{23},
\gamma_{13}=-m_{12}\gamma_{23},\gamma_{22}=m_{21}^{-1}\gamma_{21}+m_{21}\gamma_{23}.
$
Moreover, the Nakayama automorphism of $B=A[z;\overline{\sigma},\overline{\delta}]$ satisfies
\begin{align*}
&\mu_B(x_1)=-m_{21}^{-1}x_2,\quad \mu_B(x_2)=-m_{12}^{-1}x_1,\\ &\mu_B(z)=m_{12}m_{21}z+(m_{12}\gamma_{21}-m_{12}^{-1}\gamma_{23})x_1+(m_{12}^{-2}\gamma_{21}-m_{12}m_{21}\gamma_{23})x_2.
\end{align*}
\end{enumerate}
\end{solution}
\noindent (2) $q\neq-1$. The graded automorphism $\overline{\sigma}$ of $A=k\langle x_1,x_2\rangle/(x_1x_2-qx_2x_1)$ must be the following form
$
M=
\left(
\begin{array}{cc}
m_{11}  & 0\\
0   &m_{22}
\end{array}
\right),
$ and $\hdet(\overline{\sigma})=m_{11}m_{22}$.
One obtains that
\begin{align*}
\delta(r)=&(m_{11}-q)\gamma_{21}x_1^3+(1-qm_{22})\gamma_{13}x_2^3+m_{11}\gamma_{22}x_1x_2x_1+\gamma_{12}x_2x_1x_2\\
&+m_{11}\gamma_{23}x_1x_2^2+\gamma_{11}x_1^2x_2-q(m_{22}\gamma_{11}+\gamma_{22})x_2x_1^2-q(m_{22}\gamma_{12}+\gamma_{23})x_2^2x_1.
\end{align*}
\begin{lemma}
The condition (\ref{condtion for comm}) is equivalent to the following equations hold
\begin{equation}\label{equation for qneq-1}
\begin{aligned}
&(m_{11}-q)\gamma_{21}=0,\\
&(1-qm_{22})\gamma_{13}=0,\\
&(m_{22}-q)\gamma_{11}+(1-m_{11})\gamma_{22}=0,\\
&(m_{22}-1)\gamma_{12}+(1-qm_{11})\gamma_{23}=0.
\end{aligned}
\end{equation}
In this case, $\delta(r)=r\otimes \delta_r+\delta_l\otimes r$, where
\begin{equation*}
\delta_{r}=(m_{22}\gamma_{11}+\gamma_{22})x_1+m_{11}\gamma_{23}x_2,\quad
\delta_{l}=\gamma_{11}x_1+(m_{22}\gamma_{12}+\gamma_{23})x_2.
\end{equation*}
\end{lemma}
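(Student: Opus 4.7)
The plan is a direct coefficient comparison in $V^{\otimes 3}$ starting from the explicit eight-term formula for $\delta(r)$ recalled just before the lemma. Since $q\neq 0$, the subspace $r\otimes V+V\otimes r\subseteq V^{\otimes 3}$ is four-dimensional with basis $\{r\otimes x_1,\ r\otimes x_2,\ x_1\otimes r,\ x_2\otimes r\}$, and an arbitrary element of it can be written uniquely as
\[
\alpha_1(r\otimes x_1)+\alpha_2(r\otimes x_2)+\beta_1(x_1\otimes r)+\beta_2(x_2\otimes r).
\]
I would expand this in the monomial basis $\{x_i\otimes x_j\otimes x_k\}$ of $V^{\otimes 3}$ and match coefficients against those of $\delta(r)$ monomial by monomial.

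The monomials $x_1\otimes x_1\otimes x_1$ and $x_2\otimes x_2\otimes x_2$ do not appear in the generic element of $r\otimes V+V\otimes r$, so the corresponding coefficients of $\delta(r)$ must vanish, which yields the first two equations of~(\ref{equation for qneq-1}). Each of the monomials $x_1\otimes x_1\otimes x_2$, $x_2\otimes x_2\otimes x_1$, $x_2\otimes x_1\otimes x_1$ and $x_1\otimes x_2\otimes x_2$ appears in exactly one of the four basis vectors, and matching coefficients (using $q\neq 0$ to cancel where necessary) uniquely determines
\[
\beta_1=\gamma_{11},\quad \beta_2=m_{22}\gamma_{12}+\gamma_{23},\quad \alpha_1=m_{22}\gamma_{11}+\gamma_{22},\quad \alpha_2=m_{11}\gamma_{23}.
\]
Setting $\delta_r=\alpha_1 x_1+\alpha_2 x_2$ and $\delta_l=\beta_1 x_1+\beta_2 x_2$ then reads off the claimed formulas for $\delta_r$ and $\delta_l$.

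The two remaining monomials $x_1\otimes x_2\otimes x_1$ and $x_2\otimes x_1\otimes x_2$ appear in two of the basis vectors each (namely $r\otimes x_1,x_1\otimes r$ and $r\otimes x_2,x_2\otimes r$ respectively), giving the consistency conditions $\alpha_1-q\beta_1=m_{11}\gamma_{22}$ and $\beta_2-q\alpha_2=\gamma_{12}$. Substituting the values of $\alpha_i,\beta_j$ found above turns these into the third and fourth equations of~(\ref{equation for qneq-1}) respectively, proving both implications of the equivalence simultaneously: if the four displayed equations hold, the explicit $\alpha_i,\beta_j$ produce a decomposition of $\delta(r)$ in $r\otimes V+V\otimes r$, and conversely any such decomposition must agree with these values on each monomial and therefore satisfy the two consistency conditions. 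There is no conceptual obstacle; the only care required is tracking the factors of $-q$ that appear on the monomials where the basis vectors $r\otimes x_i$ and $x_j\otimes r$ overlap.
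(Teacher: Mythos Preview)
Your proof is correct and is precisely the ``straightforward computation'' the paper leaves implicit (the lemma is stated without proof, just as in the parallel $q=1$ and $q=-1$ cases). The coefficient-matching in the monomial basis of $V^{\otimes 3}$ is exactly the intended argument.
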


By (\ref{nakayama of comm}), we have the Nakayama automorphism of the graded Ore extension $B=A[z;\overline{\sigma},\overline{\delta}]$ satisfies that
 \begin{equation}\label{naka equa for qneq-1}
 \begin{aligned}
    &\mu_B(x_1)=qm_{11}^{-1}x_1,\quad \mu_B(x_2)=(qm_{22})^{-1}x_2,\\ &\mu_B(z)=m_{11}m_{22}z+((m_{22}+qm_{11}^{-1})\gamma_{11}+\gamma_{22})x_1+((m_{11}+(qm_{22})^{-1})\gamma_{23}+q^{-1}\gamma_{12})x_2.
    \end{aligned}
    \end{equation}

\begin{solution}\label{solution: qneq-1}
All solutions of (\ref{equation for qneq-1}) are as follows.
\begin{enumerate}
\item If $m_{11}=q,m_{22}=q^{-1}$, then $\gamma_{12}=-q(1+q)\gamma_{23},\gamma_{22}=-(q^{-1}+1)\gamma_{11}$ and $\gamma_{13},\gamma_{21}$ are free;%, and
 %    \begin{align*}
%    &\mu_B(x_1)=x_1,\quad \mu_B(x_2)=x_2,\quad \mu_B(z)=z.
%    \end{align*}

\item If $m_{11}\neq q,m_{22}=q^{-1}$, then $\gamma_{21}=0,\gamma_{11}=(q^{-1}-q)^{-1}(m_{11}-1)\gamma_{22},\gamma_{12}=(q^{-1}-1)^{-1}(qm_{11}-1)\gamma_{23}$, and $\gamma_{13}$ is free;%, and
%     \begin{align*}
%    &\mu_B(x_1)=qm_{11}^{-1}x_1,\quad \mu_B(x_2)=x_2,\\ &\mu_B(z)=q^{-1}m_{11}z+(q^{-1}-q)^{-1}(q^{-1}m_{11}-qm_{11}^{-1})\gamma_{22}x_1+(1-q)^{-1}(m_{11}-q)\gamma_{23}x_2.
%    \end{align*}
\item If $m_{11}=q,m_{22}\neq q^{-1}$, then $\gamma_{13}=0$, $\gamma_{22}=(1-q)^{-1}(q-m_{22})\gamma_{11}$, $\gamma_{23}=(1-q^2)^{-1}(1-m_{22})\gamma_{12}$ and $\gamma_{21}$ is free;%, and
 %    \begin{align*}
%    &\mu_B(x_1)=x_1,\quad \mu_B(x_2)=(qm_{22})^{-1}x_2,\\ &\mu_B(z)=qm_{22}z+(1-q)^{-1}(1-qm_{22})\gamma_{11}x_1+(1-q^2)^{-1}((qm_{22})^{-1}-qm_{22})\gamma_{12}x_2.
%    \end{align*}

\item If $m_{11}=1,m_{22}\neq q^{-1}$, then $\gamma_{11}=\gamma_{13}=\gamma_{21}=0$, $\gamma_{23}=(1-q)^{-1}(1-m_{22})\gamma_{12}$ and $\gamma_{22}$ is free;%, and
%     \begin{align*}
%    &\mu_B(x_1)=qx_1,\quad \mu_B(x_2)=(qm_{22})^{-1}x_2,\quad \mu_B(z)=m_{22}z+\gamma_{22}x_1+(1-q)^{-1}((qm_{22})^{-1}-m_{22})\gamma_{12}x_2.
%    \end{align*}

\item If $m_{11}=q^{-1},m_{22}\neq q^{-1}$, then $\gamma_{13}=\gamma_{21}=0,\gamma_{22}=(1-q^{-1})^{-1}(q-m_{22})\gamma_{11}$, $\gamma_{23}$ is free and $\gamma_{12}=0$ if $m_{22}\neq 1$ or $\gamma_{12}$ is free if $m_{22}=1$;%, and
%     \begin{align*}
%    &\mu_B(x_1)=q^2x_1,\quad \mu_B(x_2)=(qm_{22})^{-1}x_2,\\ &\mu_B(z)=q^{-1}m_{22}z+(1-q^{-1})^{-1}(q^2-q^{-1}m_{22})\gamma_{11}x_1+q^{-1}((1+m_{22}^{-1})\gamma_{23}+\gamma_{12})x_2.
%    \end{align*}

%\item If $m_{11}=1,m_{22}=1$, then $\gamma_{11}=\gamma_{13}=\gamma_{21}=\gamma_{23}=0$,  and the other variables are free, and
%     \begin{align*}
%    &\mu_B(x_1)=-x_1,\quad \mu_B(x_2)=-x_2,\quad \mu_B(z)=z+\gamma_{22}x_1-\gamma_{12}x_2.
%    \end{align*}
%
%\item If $m_{11}\neq-1,m_{22}=1$, then $\gamma_{13}=\gamma_{21}=\gamma_{23}=0,\gamma_{11}=\dfrac{m_{11}-1}{2}\gamma_{22}$,  and the other variables are free, and
%     \begin{align*}
%    &\mu_B(x_1)=-m_{11}^{-1}x_1,\quad \mu_B(x_2)=-x_2,\quad \mu_B(z)=m_{11}z+\frac{m_{11}+m_{11}^{-1}}{2}\gamma_{22}x_1-\gamma_{12}x_2.
%    \end{align*}
\item  If $m_{11}\neq q^{\pm 1},1,m_{22}\neq q^{-1}$, then $\gamma_{13}=\gamma_{21}=0,\gamma_{22}=(1-m_{11})^{-1}(q-m_{22})\gamma_{11}$,  and $\gamma_{23}=(1-qm_{11})^{-1}(1-m_{22})\gamma_{12}$.% %    and
%     \begin{align*}
%    &\mu_B(x_1)=qm^{-1}_{11}x_1,\quad \mu_B(x_2)=(qm_{22})^{-1}x_2,\\ &\mu_B(z)=m_{11}m_{22}z+(1-m_{11})^{-1}(qm_{11}^{-1}-m_{11}m_{22})x_1+(1-qm_{11})^{-1}((qm_{22})^{-1}-m_{11}m_{22})\gamma_{23}x_2.
%    \end{align*}

\end{enumerate}
\end{solution}

\subsubsection{Case (iii): Jordan plane}In this case, $r=x_1x_2-x_2x_1-x_2^2$, or equivalently,
$$Q=\left(
\begin{array}{cc}
0 & 1\\
-1  &  -1
\end{array}
\right),$$
$M$ has the form $
\left(
\begin{array}{cc}
m_{11}  & m_{12}\\
0   &m_{11}
\end{array}
\right)
$ and $\hdet(\overline{\sigma})=m_{11}^2$. One obtains that
\begin{align*}
\delta(r)=&(m_{11}-1)\gamma_{21}x_1^3+m_{11}\gamma_{22}x_1x_2x_1+(\gamma_{12}-\gamma_{22})x_2x_1x_2+m_{11}\gamma_{23}x_1x_2^2+(\gamma_{11}-\gamma_{21})x_1^2x_2\\
&+((m_{12}-m_{11})\gamma_{21}-m_{11}\gamma_{11}-\gamma_{22})x_2x_1^2+((m_{12}-m_{11})\gamma_{22}-m_{11}\gamma_{12}-\gamma_{23})x_2^2x_1\\
&+((1-m_{11})\gamma_{13}+(m_{12}-m_{11}-1)\gamma_{23})x _2^3.
\end{align*}

\begin{lemma}
The condition (\ref{condtion for comm}) is equivalent to the following equations hold
\begin{equation}\label{equation for Jordan}
\begin{aligned}
&(m_{11}-1)\gamma_{21}=0,\\
&(m_{11}-1)\gamma_{11}+(m_{11}-m_{12}+1)\gamma_{21}+(1-m_{11})\gamma_{22}=0,\\
&-(m_{11}+1)\gamma_{11}+(m_{11}-1)\gamma_{12}+(1-m_{11}+m_{12})\gamma_{21}+(m_{11}-m_{12})\gamma_{22}+(1-m_{11})\gamma_{23}=0,\\
&-2\gamma_{11}-\gamma_{12}+(m_{11}-1)\gamma_{13}+2\gamma_{21}+\gamma_{22}+(1-m_{11}-m_{12})\gamma_{23}=0.
\end{aligned}
\end{equation}
In this case, $\delta(r)=r\otimes \delta_r+\delta_l\otimes r$, where
\begin{eqnarray*}
&&\delta_{r}=(m_{11}\gamma_{11}+(m_{11}-m_{12})\gamma_{21}+\gamma_{22})x_1+(\gamma_{11}-\gamma_{21}+m_{11}\gamma_{23})x_2,\\
&&\delta_{l}=(\gamma_{11}-\gamma_{21})x_1+(\gamma_{11}+\gamma_{12}-\gamma_{21}-\gamma_{22}+m_{11}\gamma_{23})x_2.
\end{eqnarray*}
\end{lemma}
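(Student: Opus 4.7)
The approach is a direct computation, matching coefficients in the monomial basis of $V^{\otimes 3}$, in the same spirit as the preceding subcases of this section. The expansion of $\delta(r)$ in the monomials $x_1^3,\,x_1^2x_2,\,x_1x_2x_1,\,x_1x_2^2,\,x_2x_1^2,\,x_2x_1x_2,\,x_2^2x_1,\,x_2^3$ is already displayed just above the lemma, obtained from the facts that $\delta$ extends uniquely to a degree-one $\sigma_T$-derivation of $T(V)$ and that $\sigma(x_1)=m_{11}x_1+m_{12}x_2$, $\sigma(x_2)=m_{11}x_2$. It therefore suffices to expand an arbitrary element of $r\otimes V+V\otimes r$ in the same basis and compare. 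Writing $\delta_r=a_1x_1+a_2x_2$ and $\delta_l=b_1x_1+b_2x_2$ and using $r=x_1x_2-x_2x_1-x_2^2$, the eight coefficients of $r\otimes\delta_r+\delta_l\otimes r$ in the monomials listed above are respectively
\[
0,\ b_1,\ a_1-b_1,\ a_2-b_1,\ -a_1,\ -a_2+b_2,\ -a_1-b_2,\ -a_2-b_2.
\]

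Equating with the corresponding coefficients of $\delta(r)$ produces an $8\times 4$ linear system in the unknowns $a_1,a_2,b_1,b_2$. Four of the rows determine the unknowns: the $x_1^2x_2$ row gives $b_1=\gamma_{11}-\gamma_{21}$, and then the $x_1x_2x_1$, $x_1x_2^2$, $x_2x_1x_2$ rows yield $a_1=b_1+m_{11}\gamma_{22}$, $a_2=b_1+m_{11}\gamma_{23}$ and $b_2=a_2+\gamma_{12}-\gamma_{22}$; on substituting back, these agree precisely with the formulas stated for $\delta_r$ and $\delta_l$ in the lemma. The remaining four rows become compatibility constraints on the parameters: the $x_1^3$ row is itself the first equation of (\ref{equation for Jordan}); the $x_2x_1^2$ row gives a second expression for $a_1$ whose equality with the first yields the second equation; the $x_2^2x_1$ row does the same for $b_2$ and produces the third equation; and finally the $x_2^3$ row, after substituting the already-determined $a_2$ and $b_2$, yields the fourth equation. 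The converse is obtained by reading the same eight identifications backwards: if the four equations hold and $\delta_r,\delta_l$ are defined by the stated formulas, then $\delta(r)=r\otimes\delta_r+\delta_l\otimes r$.

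There is no conceptual obstacle; the whole argument is a bookkeeping exercise of exactly the same shape as in the commutative, $q=-1$ and $q\neq\pm1$ subcases already treated. The one thing worth being careful about is sign tracking caused by the two ``$-$'' signs in $r=x_1x_2-x_2x_1-x_2^2$, which cause every monomial besides $x_1^3$ to pick up contributions from more than one of $a_1,a_2,b_1,b_2$.
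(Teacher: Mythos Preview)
Your approach is correct and is exactly the (unwritten) direct computation the paper intends, matching the eight monomial coefficients of $\delta(r)$ against those of $r\otimes\delta_r+\delta_l\otimes r$.

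One small inaccuracy: your formula $a_1=b_1+m_{11}\gamma_{22}=\gamma_{11}-\gamma_{21}+m_{11}\gamma_{22}$ obtained from the $x_1x_2x_1$ row is \emph{not} literally the lemma's $x_1$-coefficient of $\delta_r$, which is $m_{11}\gamma_{11}+(m_{11}-m_{12})\gamma_{21}+\gamma_{22}$. The latter is what the $x_2x_1^2$ row gives directly (via $-a_1=(m_{12}-m_{11})\gamma_{21}-m_{11}\gamma_{11}-\gamma_{22}$), and the two expressions for $a_1$ differ by exactly the left-hand side of the second equation in (\ref{equation for Jordan}); so they coincide once the constraints hold. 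Similarly, the equation you extract from the $x_2^2x_1$ row is the lemma's third equation only up to adding a multiple of the second. None of this affects the correctness of the argument, but ``agree precisely'' overstates the match: three of the four coefficients agree on the nose, and the fourth agrees modulo the constraints.
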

By (\ref{nakayama of comm}), we have the Nakayama automorphism of a graded Ore extension $B=A[z;\overline{\sigma},\overline{\delta}]$ satisfies that
 \begin{equation}\label{naka equa for jordan}
 \begin{aligned}
    &\mu_B(x_1)=m_{11}^{-1}x_1+(2m_{11}^{-1}-m_{11}^{-2}m_{12})x_2,\quad \mu_B(x_2)=m_{11}^{-1}x_2,\\
    &\mu_B(z)=m_{11}^2z+
    ((m_{11}+m_{11}^{-1})\gamma_{11}+(m_{11}-m_{11}^{-1}-m_{12})\gamma_{21}+\gamma_{22})x_1\\
    &\quad+((1+3m_{11}^{-1}-m_{11}^{-2}m_{12})\gamma_{11}+m_{11}^{-1}\gamma_{12}+(m_{11}^{-2}m_{12}-3m_{11}^{-1})\gamma_{21}-m_{11}^{-1}\gamma_{22}+(1+m_{11})\gamma_{23})x_2.
    \end{aligned}
\end{equation}
%The solutions to equations (\ref{equation for Jordan}) and the Nakayama automorphism of a graded Ore extension $B=A[z;\overline{\sigma},\overline{\delta}]$ are as follows.
\begin{solution}\label{solution: Jordan}
All solutions to equations (\ref{equation for Jordan}) are as follows.
\begin{enumerate}
\item If $m_{11}=1$, then
$
\gamma_{11}=(m_{12}\gamma_{21}+(1-m_{12})\gamma_{22})/2,\gamma_{12}=m_{12}\gamma_{22}-m_{12}\gamma_{23},
$ and $\gamma_{13}$ is free, where $\gamma_{21}=0$ if $m_{12}=2$ or $\gamma_{21}$ if free if $m_{12}\neq 2$;%, and
%\begin{align*}
%&\mu_B(x_1)=x_1+(2-m_{12})x_2,\quad \mu_B(x_2)=x_2,\\ &\mu_B(z)=z+(2-m_{12})\gamma_{22}x_1+((2-m_{12})(1-m_{12})\gamma_{22}/2+(2-m_{12})\gamma_{23})x_2.
%\end{align*}

\item If $m_{11}\neq 1$, $\gamma_{21}=0,\gamma_{11}=\gamma_{22},\gamma_{12}=(m_{11}-1)^{-1}(m_{12}+1)\gamma_{22}+\gamma_{23},
    \gamma_{13}=(m_{11}-1)^{-1}(m_{11}+m_{12})((m_{11}-1)^{-1}\gamma_{22}+\gamma_{23})$. % and
%    \begin{align*}
%\mu_B(x_1)=&m_{11}^{-1}x_1+(2m_{11}^{-1}-m_{11}^{-2}m_{12})x_2,\quad \mu_B(x_2)=m_{11}^{-1}x_2,\\ \mu_B(z)=&m_{11}^2z+(m_{11}+m_{11}^{-1}+1)\gamma_{22}x_1\\
%&+((m_{11}^2-m_{11})^{-1}(m_{11}^2+m_{11}+m_{11}^{-1}m_{12}-1)\gamma_{22}+(m_{11}+m_{11}^{-1}+1)\gamma_{23})x_2.
%\end{align*}
\end{enumerate}
\end{solution}

\begin{proof}[Proof of Theorem \ref{thm: CY for 2-dim}]
If $B$ is CY, then $\overline{\sigma}=\mu_A$ follows by Theorem \ref{thm: nakayama automorphism of ore extension}.

If $A$ is commutative, then $\mu_A$ is the identity map, that is $M=E_2$. Hence, (a) is an immediate result by Solution \ref{solution: commutative}(a) and (\ref{naka equa for comm}), or Theorem \ref{thm: NA of Ore ext over Polynomial}.

Now assume $A$ is noncommutative and  $\overline{\sigma}=\mu_A$.

If $A$ is a quantum plane, then $\mu_A(x_1)=qx_1$ and $\mu_A(x_2)=q^{-1}x_2$, that is $m_{11}=q,m_{22}=q^{-1},m_{12}=m_{21}=0$. Hence, $B$ is CY by Solution \ref{solution: q=-1}(a) and (\ref{naka equa for q=-1}) if $q=-1$, and Solution \ref{solution: qneq-1}(a) and (\ref{naka equa for qneq-1}) if $q\neq-1$.

If $A$ is the Jordan plane, then $\mu_A(x_1)=x_1+2x_2$ and $\mu_A(x_2)=x_2$, that is $m_{11}=m_{22}=1,m_{12}=2,m_{21}=0$. By Solution \ref{solution: Jordan}(a) and (\ref{naka equa for jordan}), one obtains that $B$ is CY.
%It is an immediate result by Solution \ref{solution: commutative}(a), Solution \ref{solution: q=-1}(a), Solution \ref{solution: q=-1 II}(a), Solution \ref{solution: qneq-1}(a), Solution \ref{solution: Jordan}(a).
\end{proof}

%It is a little bit surprised that there is no any restriction on the $\overline{\sigma}$-derivation $\overline{\delta}$ to make the graded Ore extension $A[z;\overline{\sigma},\overline{\delta}]$ be a CY algebra for any noncommutative noetherian 2-dimensional AS-regular algebra $A$. It is natural to ask whether this result holds for any noncommutative  AS-regular algebra of dimension $2$, or even for any noncommutative Koszul AS-regular algebra.

%\subsection{PBW deformations}
%
%d

\vskip7mm

\noindent {\bf Acknowledgments.} Y. Shen is supported by NSFC (Grant No.11701515) and the Fundamental Research Funds of Zhejiang Sci-Tech University (Grant No. 2019Q071).

\end{document}